\documentclass[reqno]{amsart}
\usepackage{amssymb}
\usepackage{graphicx}

\usepackage[usenames, dvipsnames]{color}
\usepackage{verbatim}
\usepackage{mathrsfs}

\numberwithin{equation}{section}

\newtheorem{theorem}{Theorem}[section]

\newtheorem{lemma}[theorem]{Lemma}
\newtheorem{prop}[theorem]{Proposition}

\theoremstyle{definition}
\newtheorem{remark}[theorem]{Remark}

\theoremstyle{definition}

\theoremstyle{definition}

\makeatletter
\def\dashint{\operatorname%
{\,\,\text{\bf-}\kern-.98em\DOTSI\intop\ilimits@\!\!}}
\makeatother

\def\\det{\text{det}}

\def\.5{\frac{1}{2}}

\newcommand{\RN}[1]{%
  \textup{\uppercase\expandafter{\romannumeral#1}}%
}

\renewcommand{\epsilon}{\varepsilon}

\newcounter{marnote}

\begin{document}
\title[Liouville theorem and existence]{Liouville theorem and sharp solvability for solutions of the parabolic Monge-Ampère equation with periodic data}
\author[K. Yan]{Kui Yan}
\address[K. Yan]{School of Mathematical Sciences, Beijing Normal University, Laboratory of Mathematics and Complex Systems, Ministry of Education, Beijing 100875, China.}
\email{202231130028@mail.bnu.edu.cn}

\author[J. Bao]{Jiguang Bao\textsuperscript{*}}
\address[J. Bao]{School of Mathematical Sciences, Beijing Normal University, Laboratory of Mathematics and Complex Systems, Ministry of Education, Beijing 100875, China.}
\email{jgbao@bnu.edu.cn}

\footnotetext{*corresponding author}

\date{\today} 

\begin{abstract}

We prove a Liouville Theorem for ancient solutions of the parabolic Monge-Ampère equation with smooth periodic data, generalizing Caffarelli-Li's result \cite{cl04} in 2004 to the parabolic background. To achieve this, we obtain a necessary and sufficient condition for the existence of the smooth periodic solution of the equation $\left(1-u_t\right)\det \left(D_x^2u+I\right)=f$ in $\mathbb{R}^{n+1}$, where $f$ is smooth and periodic in both spatial and temporal variables. This parabolic existence theorem parallels the elliptic counterpart established by Li \cite{l90} in 1990.

\quad

\noindent{\textbf{Keywords: }}parabolic Monge-Ampère equation, periodic solution, Liouville Theorem, periodic data.

\quad

\noindent{\textbf{Mathematics Subject Classification: }}35K96.
\end{abstract}
\maketitle

\section{introduction}\label{introduction}

This paper concerns a Liouville theorem and the existence of the periodic solution to a Monge-Ampère type fully nonlinear parabolic equation originated from
\begin{equation}\label{pma1}
-u_t\det D_x^2u=f,
\end{equation}
when $f=f(x,t)$ is periodic. Here we use $u_t$ to denote $u$'s derivative in $t$ and $D_x^2u$ to denote the Hessian of $u$ in $x$. A function $u=u(x,t)$ is termed p-convex if it is convex in $x$ and nonincreasing in $t$. Denote $e_1,\cdots,e_n$ as the orthonormal basis vectors in $\mathbb{R}^n$.

The parabolic Monge-Ampère equation \eqref{pma1} was introduced by Krylov in \cite{k76}. This equation bears close connections to ABP-type maximal principle for parabolic equations \cite{ks81}, stochastic process \cite{k81a,k81b} and deformation of the surface \cite{f74,t85}.

We state our Liouville theorem as follows.

\begin{theorem}\label{liouville}
Let $f\in C^{2+\alpha,\frac{2+\alpha}{2}}\left(\mathbb{R}^{n+1}_-\right)$ satisfy that
\[
f(x+ke_i,t-l)=f(x,t)
\]
for every $(x,t)\in\mathbb{R}^{n+1}_-$, $k,l\in\mathbb{N}$ and $1\le i\le n$, and $u\in C^{2,1}\left(\mathbb{R}^{n+1}_-\right)$ be a p-convex ancient solution to \eqref{pma1} with  
\begin{equation}\label{ut}
m_1\le-u_t\le m_2\quad\text{in}\quad\mathbb{R}^{n+1}_-,
\end{equation}
where $m_1,m_2>0$ are two constants. Then there exist a constant $\tau>0$, a vector $b\in\mathbb{R}^n$ and an $n\times n$ positive definite matrix $A$ with 
\[
\tau\det A=-\mkern-19mu\int_{[0,1]^n\times[-1,0]}f(x,t)dxdt,
\]
such that 
\[
v(x,t):=u(x,t)-\left(-\tau t+\frac{1}{2}x'Ax+b\cdot x\right)
\] 
possesses the same periodicity of $f$.
\end{theorem}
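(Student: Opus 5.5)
Following Caffarelli--Li, the plan is to compare $u$ with its own translates. For $k\in\mathbb{Z}^n$ and $l\in\mathbb{N}$, set
\[
u_{k,l}(x,t):=u(x+k,t-l).
\]
Each $u_{k,l}$ solves the same equation \eqref{pma1} as $u$, because $f$ is periodic, and satisfies the same bound \eqref{ut}. Hence the difference $w=u_{k,l}-u$ solves a linear parabolic equation
\[
-a\,w_t+\operatorname{tr}(B\,D_x^2w)=0 .
\]
The coefficients come from writing \eqref{pma1} as $\log(-u_t)+\log\det D_x^2u=\log f$ and integrating along the segment between $u$ and $u_{k,l}$:
\[
a=\int_0^1\frac{ds}{-(su_{k,l}+(1-s)u)_t},\qquad B=\int_0^1 \bigl(D_x^2(su_{k,l}+(1-s)u)\bigr)^{-1}ds .
\]
The goal is to show that $w$ is an affine function of $(x,t)$ with no genuinely nonlinear part. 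Then $u$ is the quadratic-linear profile $-\tau t+\tfrac12 x'Ax+b\cdot x$ plus a function with the periodicity of $f$.

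\textbf{Step 1: large-scale asymptotics.} By \eqref{ut}, $u_t$ is bounded between $-m_2$ and $-m_1$. For an ancient p-convex solution with $-u_t$ bounded above and below, the parabolic analogue of the Caffarelli--Li blow-down (rescale $u_R(x,t)=R^{-2}u(Rx,R^2t)$) converges to a solution of $-\bar u_t\det D^2_x\bar u=\bar f$. Here $\bar f$ is the average of $f$ over $[0,1]^n\times[-1,0]$, and the limit is obtained by homogenization of the periodic right-hand side. The parabolic Liouville theorem for constant right-hand side (Gutiérrez--Huang type) forces
\[
\bar u=-\tau t+\tfrac12 x'Ax, \qquad \tau\det A=\bar f .
\]
Consequently $u-(-\tau t+\tfrac12x'Ax)$ grows subquadratically in $|x|$ and sublinearly in $|t|$ on parabolic cylinders.

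\textbf{Step 2: the equation for $w$ is uniformly parabolic.} I would show $m_1\le a^{-1}\le m_2$ directly from \eqref{ut}. Uniform bounds on $B$ require $C^{2,1}$ estimates on $u$. For these I would use interior parabolic Pogorelov/Krylov--Tso-type estimates applied to the normalized rescalings $u_R$, whose sections are comparable to ellipsoids thanks to Step~1. The homogeneous-type scaling gives $|D_x^2u|\le C$ and $D_x^2u\ge c>0$ uniformly on $\mathbb{R}^{n+1}_-$, and Krylov--Safonov plus Schauder (using $f\in C^{2+\alpha,\frac{2+\alpha}{2}}$) gives $C^{2+\alpha,\frac{2+\alpha}2}$ bounds. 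I expect this step, namely uniform strict convexity and regularity at all scales deduced from the blow-down, to be the main obstacle. The blow-down limit only gives the profile in an averaged sense, and one must iterate it through a chain of shrinking sections, much as in Caffarelli--Li's elliptic argument. I would first try to reproduce their sections argument with parabolic sections $\{u<\ell+h\}\cap\{t>-h\}$.

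\textbf{Step 3: the affine structure.} Consider the first-order differences $w=u_{k,l}-u$. By Step~1 they grow at most linearly in $x$ and are bounded in $t$-increments on cylinders. They are ancient solutions of uniformly parabolic equations with Hölder coefficients, so the Liouville theorem for such equations (via Krylov--Safonov Hölder decay applied to $w$ minus its affine approximation) shows that $w$ is affine. Comparing with Step~1, this affine function must be
\[
w=\bigl(Ak+b\bigr)\cdot e\text{-terms}= k'Ax+\tfrac12k'Ak+b\cdot k+\tau l ,
\]
where the terms $b\cdot k$ define $b$. To see this, note that the map $(k,l)\mapsto w$ is additive, which forces linear dependence on $(k,l)$; the quadratic coefficient $A$ and the time coefficient $\tau$ are already identified by the blow-down. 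Then
\[
v(x+k,t-l)-v(x,t)=0
\]
for all $k,l$, which is exactly the claimed periodicity. The identity $\tau\det A=\bar f$ was obtained in Step~1.

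Finally, I would double-check one point. A possible additional constant linear-in-$t$ drift in $w$ is excluded because $w_t$ inherits the periodic-in-$x$ structure and the bound \eqref{ut}, which forces the $t$-coefficient to equal $\tau l$.
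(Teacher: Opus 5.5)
\textbf{The gap: the spatial Liouville claim in Step~3.} The first difference $w=u(\cdot+k,\cdot)-u$ does solve a uniformly parabolic linear equation. However, no theorem says that ancient solutions of linear growth of such an equation are affine.

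For nondivergence operators with variable coefficients, Krylov--Safonov and Harnack give only two things. Bounded ancient solutions are constant. Solutions growing like $\rho^{\gamma}$, with $\rho=(|x|^2-t)^{1/2}$, are guaranteed constant only when $\gamma$ is below the Krylov--Safonov H\"older exponent, which can be very small. Passing from ``linear growth'' to ``affine'' needs a large-scale $C^{1,\alpha}$ theory, and that requires structure of the coefficients at infinity, such as convergence to constants or periodicity.

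Here the coefficients are built from $(D_x^2u)^{-1}$ and $u_t^{-1}$ at $(x,t)$ and $(x+k,t)$. They do not converge at infinity, because $D_x^2u$ oscillates. Their periodicity is exactly what you are trying to prove, so assuming it would be circular. Subtracting $k'Ax+c$ does not help either: Step~1 plus the Hessian bound (by interpolation) gives only $|w-k'Ax-c|\lesssim |k|\rho^{1-\epsilon/2}$. That growth is far above the Krylov--Safonov threshold.

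Your idea does work for pure time shifts. The function $u(x,t-l)-u(x,t)\in[m_1l,m_2l]$ is a bounded ancient solution, hence constant, and the blow-down identifies the constant as $\tau l$. This is a shorter route to Proposition~\ref{deltat}. But the spatial shifts are where the real difficulty lies.

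\textbf{How the paper avoids this.} It never claims a two-sided Liouville theorem for unbounded solutions; it works with one-sided information.
\begin{itemize}
\item By concavity of $\log a+\log\det M$, the second differences $\Delta_e^2u$ are subsolutions of the linearized equation. Blow-down plus the weak Harnack inequality then pins down $\sup_{\mathbb{R}^{n+1}_-}\Delta_e^2u=e'Ae/|e|^2$ for lattice vectors $e$ (Proposition~\ref{deltax}).
\item The decisive ingredient missing from your plan is the existence theorem (Theorem~\ref{existence}, Remark~\ref{general}). It gives a periodic $\tilde w$ with $-(\tilde w+Q)_t\det D_x^2(\tilde w+Q)=f$.
\item Set $h=u-Q-b\cdot x-\tilde w$. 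Then $h$ solves a uniformly parabolic linear equation, is periodic in $t$, satisfies $\Delta_e^2h\le 0$ along the lattice, and satisfies $h(je_k,0)\le 0$.
\item A rescaling argument in the style of Lu--Li shows $\sup h<\infty$. It uses Harnack and the fact that the blow-up limit is concave and independent of $t$.
\item Only then does a one-sided Liouville argument, using the time periodicity and Harnack, make $h$ constant.
\end{itemize}
The same existence theorem also supplies the periodic corrector $\xi$ behind the homogenization estimate (Proposition~\ref{homogenization}). Your Step~1 uses that estimate without justification.
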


\begin{remark}
Theorem \ref{liouville} remains valid when the orthonormal basis vectors $e_1,\cdots,e_n$ are replaced by arbitrary linearly independent vectors $\epsilon_1,\cdots,\epsilon_n$ and the time period is changed into $\epsilon_0>0$. Specifically, if $f$ satisfies that 
\[
f(x+k\epsilon_i,t-l\epsilon_0)=f(x,t)
\]
for every $(x,t)\in\mathbb{R}^{n+1}_-$, $k,l\in\mathbb{N}$ and $1\le i\le n$, then there exist a constant $\tau>0$, a vector $b\in\mathbb{R}^n$ and an $n\times n$ positive definite matrix $A$ with 
\begin{equation}\label{coefficient}
\tau\det A=-\mkern-19mu\int_{\mathcal{C}_p}f(x,t)dxdt 
\end{equation}
such that $v$ inherits $f$'s periodicity, where
\[
\mathcal{C}_p:=\left\{\left(\sum\limits_{i=1}^n\lambda_i\epsilon_i,-\lambda_0\epsilon_0\right)\in\mathbb{R}^{n+1}_-:0\le\lambda_j\le1,0\le j\le n\right\}.
\]
\end{remark}

\begin{remark}
The Liouville theorem ($f\equiv1$) of Gutiérrez-Huang \cite{gh98} can be directly derived as special cases of Theorem \ref{liouville}. In this reaserch direction, Zhang-Bao \cite{zb18} proved that when $f=f(x)$ is a positive, periodic and Hölder continuous function in $\mathbb{R}^{n+1}_-$, any p-convex ancient solution must be of the form $-\tau t+p(x)+v(x)$, where $\tau>0$, $p(x)$ is a convex quadratic polynomial, and $v(x)$ is periodic. 
\end{remark}

When $f\in C^\alpha\left(\mathbb{R}^n\right)$ is a positive periodic function, Caffarelli-Li \cite{cl04} obtained that the entire convex solution to 
\begin{equation}\label{ema}
\det D^2_xu=f
\end{equation}
can be decomposed into a quadratic polynomial and a periodic function. Our result generalizes Caffarelli-Li's work \cite{cl04} from the elliptic case to the parabolic setting. Lu-Li \cite{ll22} weakened $f$'s condition to only positively bounded periodic function. Teixeira-Zhang \cite{tz16} proved that when $f\in C^{1,\alpha}\left(\mathbb{R}^n\right)$ tends to a periodic function at infinity, the solution decays to a quadratic polynomial plus a periodic function. 

As the second main result of the present paper, we consider the existence of $C^{4+\alpha,\frac{4+\alpha}{2}}$ periodic solutions to the parabolic Monge-Ampère type equation
\begin{equation}\label{matype}
\left\{\begin{array}{ll}
\left(1-u_t\right)\det \left(D_x^2u+I\right)=f & \text{in }\mathbb{R}^{n+1}, \\
1-u_t>0, \quad D_x^2u+I>0 & \text{in }\mathbb{R}^{n+1}.
\end{array}\right.
\end{equation}
For $m\in\mathbb{N}$ and $\alpha\in\left(0,1\right]$, we say that 
\[
u\in C^{2m+\alpha,\frac{2m+\alpha}{2}}\left(\mathbb{T}^{n+1}\right)
\] 
if $u\in C^{2m+\alpha,\frac{2m+\alpha}{2}}\left(\mathbb{R}^{n+1}\right)$ satisfies that
\begin{equation}\label{average}
\int_{[0,1]^{n+1}}u(x,t)dxdt=0
\end{equation}
and 
\begin{equation}\label{periodic}
f(x+ke_i,t-l)=f(x,t)
\end{equation}
for every $(x,t)\in\mathbb{R}^{n+1}$, $k,l\in\mathbb{N}$ and $1\le i\le n$.

\begin{theorem}\label{existence}
Let $n\ge1$ and $f\in C^{2+\alpha,\frac{2+\alpha}{2}}\left(\mathbb{R}^{n+1}\right)$ be a positive function satisfying \eqref{periodic}. Then there uniquely exists $u\in C^{4+\alpha,\frac{4+\alpha}{2}}\left(\mathbb{T}^{n+1}\right)$ solving \eqref{matype}, which inherits the same periodicity with $f$, if and only if
\[
-\mkern-19mu\int_{[0,1]^{n+1}}f(x,t)dxdt=1.
\]
\end{theorem}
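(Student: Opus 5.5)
Necessity is a direct computation, and for existence I would use the method of continuity on the torus. Uniqueness is a maximum-principle argument once the normalization \eqref{average} is imposed.

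\emph{Necessity.} Write $w=\frac12|x|^2-t+u$, so that \eqref{matype} becomes $-w_t\det D_x^2w=f$ with $w$ p-convex. Since $u$ is periodic, the map
\[
(x,t)\mapsto\bigl(D_xw,\,w-x\cdot D_xw\bigr)=\bigl(x+D_xu,\,-t+u-x\cdot D_xu\bigr)
\]
is a parabolic analogue of the gradient map. Its Jacobian determinant is $-w_t\det D_x^2w=f$. On the torus, the map $x\mapsto x+D_xu$ has degree one for each $t$. I would instead use a simpler route via the AM--GM inequality or concavity. Set $F(u)=\log(1-u_t)+\log\det(I+D_x^2u)$. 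Then
\[
\log f=F(u)\le\log(1-u_t)+\operatorname{tr}\log(I+D_x^2u).
\]
This gives an inequality only, not equality, so it is not enough. The clean approach is an exact change of variables. Integrate $(1-u_t)\det(I+D_x^2u)$ over the period cell. The term $\int\det(I+D_x^2u)\,dx=1$ for each $t$, because $\det(I+D^2u)-1$ is a sum of divergences of periodic fields (null Lagrangians). The remaining term $\int u_t\det(I+D_x^2u)\,dx\,dt$ is the $t$-integral of $\frac{d}{dt}$ of the functional $E(u)=\sum_k c_k\int u\,\sigma_k(D_x^2u)\,dx$. This follows from the variational structure of $\sigma_k$ on periodic functions: $\frac{d}{dt}\int u\,\sigma_k(D^2u)=(k+1)\int u_t\sigma_k(D^2u)$ by symmetry of the linearization. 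Since $E(u)$ is periodic in $t$, this term integrates to zero. Hence the average of $f$ equals $1$.

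\emph{Existence.} I would take $f_s=(1-s)+sf$ for $s\in[0,1]$; each $f_s$ has mean $1$. Let $S$ be the set of $s$ for which a solution $u_s\in C^{4+\alpha,\frac{4+\alpha}{2}}(\mathbb{T}^{n+1})$ exists. Then $0\in S$ with $u=0$.

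\emph{Openness.} The linearized operator $Lv=-a v_t+a^{ij}v_{ij}$ has positive $a$ and $(a^{ij})$ positive definite. On periodic functions it maps mean-zero $C^{4+\alpha}$ functions to a codimension-one space. Its kernel consists of constants, by the strong maximum principle applied on the torus. Its cokernel is spanned by a positive periodic $\rho$ with $L^*\rho=0$, by Fredholm theory and Krein--Rutman. I would handle the constraint either by adding an unknown constant $c$ (solving $F(u)=\log f_s+c$) and then showing $c=0$ via the necessity identity, or directly by the implicit function theorem in the mean-one class.

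\emph{Closedness.} This needs a priori estimates, and that is the main obstacle. First, $C^0$ and gradient bounds on $u$ come from periodicity plus convexity of $\frac12|x|^2+u$ in $x$: $|D_xu|\le C$, so $\operatorname{osc}_xu\le C$. Second, $u_t$ is bounded. Since $\det(I+D^2u)$ has mean $1$ in $x$, at the extrema of $u_t$ over the torus we have $D^2u_t$ semidefinite. Differentiating the log equation in $t$ then gives bounds on $1-u_t$ in terms of $\sup|\partial_t\log f|$ through the maximum principle. More directly, at the time where $\int u\,dx$ is extremal, one compares with the spatial mean. Third, $D_x^2u$ is bounded through a Pogorelov/Calabi-type maximum principle for $\log\lambda_{\max}(I+D^2u)$ together with the bound on $u_t$, which needs no boundary. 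Fourth, $u_t$ is bounded below away from $1$, i.e.\ $1-u_t\ge c>0$, using $\det(I+D^2u)\le C$. These give uniform parabolicity. Finally, Krylov--Safonov and Evans--Krylov for concave parabolic equations, followed by Schauder theory, yield $C^{4+\alpha,\frac{4+\alpha}{2}}$ bounds, which closes $S$.

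\emph{Uniqueness.} If $u$ and $v$ are two solutions, their difference satisfies a linear parabolic periodic equation, so it is constant by the maximum principle. The normalization \eqref{average} then forces $u=v$.

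The step I expect to be hardest is the second-derivative bound, together with the two-sided bound on $u_t$ that it depends on.
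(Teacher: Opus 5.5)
Necessity, openness and uniqueness are fine. Your necessity argument is correct and cleaner than the paper's Lemma~\ref{periodicity lemma}: each $\sigma_k(D_x^2u)$ with $k\ge1$ is a periodic divergence, and $\frac{d}{dt}\int u\,\sigma_k(D_x^2u)\,dx=(k+1)\int u_t\,\sigma_k(D_x^2u)\,dx$. The genuine gap is in your second closedness step, which does not give a bound on $u_t$.

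\begin{itemize}
\item At an extremum of $u_t$ on the torus, the $t$-differentiated equation $\frac{u_{tt}}{u_t-1}+(I+D_x^2u)^{ij}u_{tij}=(\log f)_t$ reduces to $(I+D_x^2u)^{ij}u_{tij}=(\log f)_t$. This only fixes the sign of $(\log f)_t$ there. Since $u_t$ solves a linear equation with no zeroth-order term, the maximum principle on a closed manifold says nothing about its size.
\item That $\det(I+D_x^2u)$ has $x$-mean one gives, on each time slice, only one point with $1-u_t\le\sup f$ and one with $1-u_t\ge\inf f$. Spreading this everywhere needs a Harnack inequality, i.e.\ the uniform parabolicity you are trying to prove.
\item $\int u_t\,dx=0$ at a time where $\int u\,dx$ is extremal is not a pointwise statement.
\end{itemize}

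Your fourth step recovers $1-u_t\ge c$ from an upper bound on $D_x^2u$. The other side, $1-u_t\le C$, is never established. Equivalently, $\det(I+D_x^2u)\ge c$ is missing. Together with the upper eigenvalue bound, that is what gives $I+D_x^2u\ge cI$ and the uniform parabolicity needed for Evans--Krylov.

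The missing idea is an auxiliary function of $u$ itself. With $L=\frac{1}{u_t-1}D_t+(I+D_x^2u)^{ij}D_{ij}$ you have $Lu=n+1-(1-u_t)^{-1}-\operatorname{tr}(I+D_x^2u)^{-1}$. Take $\phi$ with $\phi'\le-c_0<0$ and $\phi''\ge(\phi')^2$ on the range of $u$. At a maximum point, the term $\phi'(u)Lu$ contributes at least $c_0\bigl[(1-u_t)^{-1}+\operatorname{tr}(I+D_x^2u)^{-1}\bigr]-C$. The first-order conditions turn the gradient terms into $(\phi''-(\phi')^2)(I+D_x^2u)^{ij}u_iu_j\ge0$. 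The equation $(1-u_t)\det(I+D_x^2u)=f$ then converts the bound on these reciprocals into upper bounds.

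The paper applies this to $\bigl(\lambda_{\max}(I+D_x^2u)+1-u_t\bigr)e^{\phi(u)}$ and controls both quantities at once. Applying it to $(1-u_t)e^{\phi(u)}$ alone would repair your step two. The same term is also what lets a maximum principle for $\lambda_{\max}$ give a bound on the torus, where there is no cutoff. This requires a full $C^0$ bound on $u$, including the oscillation in $t$. That comes from $u_t<1$ together with $t$-periodicity, not from convexity in $x$ as your first step suggests.
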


\begin{remark}\label{general}
Let $\{\epsilon_1,\cdots,\epsilon_n\}$ be a basis of $\mathbb{R}^n$, $\epsilon_0,\tau>0$ be fixed constants, $A$ be an $n\times n$ symmetric positive definite matrix and $f\in C^{2+\alpha,\frac{2+\alpha}{2}}\left(\mathbb{R}^{n+1}\right)$ be a positive function satisfying 
\[
f(x+k\epsilon_i,t+l\epsilon_0)=f(x,t)
\]
for every $(x,t)\in\mathbb{R}^{n+1}$ and $k,l\in\mathbb{N}$. By appropriately modifying the proof of Theorem \ref{existence}, we establish that there uniquely exists a solution $u\in C^{4+\alpha,\frac{4+\alpha}{2}}\left(\mathbb{R}^{n+1}\right)$ that shares the periodicity with $f$, satisfies
\[
-\mkern-19mu\int_{\left\{\left(\sum\limits_{i=1}^n\lambda_i\epsilon_i,\lambda_0\epsilon_0\right)\in\mathbb{R}^{n+1}:0\le\lambda_j\le1,0\le j\le n\right\}}u(x,t)dxdt=0
\]
and solves
\[
\left\{\begin{array}{ll}
\left(\tau-u_t\right)\det \left(D_x^2u+A\right)=f & \text{in }\mathbb{R}^{n+1}, \\
\tau-u_t>0, \quad D_x^2u+A>0 & \text{in }\mathbb{R}^{n+1},
\end{array}\right.
\]
if and only if the compatibility condition \eqref{coefficient} holds.
\end{remark}

\begin{remark}
Theorem \ref{existence} represents a parabolic extension of Theorem 2.2 in \cite{l90}. There Li \cite{l90} gave a necessary and sufficient condition for the existence of periodic solution to a Monge-Ampère type equation. More precisely, let $A$ be an $n\times n$ positive definite matrix and $f\in C^{2+\alpha}\left(\mathbb{R}^n\right)$ be a positive function satisfying
\[
f(x+e_i)=f(x)
\]
for every $x\in\mathbb{R}^n$, and $1\le i\le n$. Then there exists a unique solution $C^{4+\alpha}\left(\mathbb{R}^n\right)$ that has zero average in $[0,1]^n$ and solves
\[
\left\{\begin{array}{ll}
\det \left(A+D_x^2u\right)=f & \text{in }\mathbb{R}^n, \\
A+D_x^2u>0 & \text{in }\mathbb{R}^n,
\end{array}\right.
\]
if and only if 
\[
\det A=-\mkern-19mu\int_{[0,1]^n}f(x)dx.
\]
\end{remark}

Caffarelli-Li \cite{cl03} derived the existence of the entire convex solution of \eqref{ema} for bounded supports of $f-1$ for $n\ge3$. In addition, this solution converges to a convex quadratic polynomial at rate $|x|^{2-n}$ at infinity. This foundational result was subsequently extended by Bao-Li-Zhang \cite{blz15} to the more general case \eqref{beta} in $n\ge3$. For $n=2$, Bao-Xiong-Zhou \cite{bxz19} proved the existence of entire solutions converging to a quadratic polynomial plus a logarithmic term.

Recently, significant progress has been made in the existence of the ancient solution to \eqref{pma1} in $\mathbb{R}^{n+1}_-$. Two fundamental viscosity solutions when $f\equiv1$ are:
\[
u_0(x,t)=-t+\frac{1}{2}|x|^2,\quad u_1(x,t)=\left(\frac{\left(n+1\right)^{n+2}}{\left(2n\right)^n\left(n-1\right)}\right)^{\frac{1}{n+1}}\left(-t\right)^{\frac{1}{n+1}}|x|^{\frac{2n}{n+1}},
\]
where $u_1$ exists only for $n\ge2$. Notably, $u_1$ was firstly introduced by Gutiérrez-Huang \cite{gh98}. Zhou-Gong-Bao \cite{zgb21} established the existence of the solution that tends to $u_0$ at infinity for $n\ge3$, under the condition that $f$ satisfies
\begin{equation}\label{beta}
f(x,t)=1+O\left(u_0\right)^{-\frac{\beta}{2}},\quad\text{as}\quad |x|^2-t\rightarrow\infty,
\end{equation}
where $\beta>2$. An-Bao-Liu \cite{abl24} constructed a smooth solution in variable separated form, that converges to $u_1$ as $|x|\rightarrow\infty$. For $n\ge5$ and $\beta>2$, Bao-Liu-Zhou \cite{blz25} demonstrated the existence of solutions converging to $u_1$ when
\[
f(x,t)=1+O\left(u_1\right)^{-\frac{\beta}{2}},\quad\text{as}\quad |x|^2-t\rightarrow\infty.
\] 

This paper is organized as follows. At the end of Section \ref{introduction}, we introduce essential notations. Section \ref{calculas} presents a calculas lemma related to the periodicity. Theorems \ref{existence} is proved in Sections \ref{thm1}. While the proof of Theorem \ref{liouville} is divided into two parts: Sections \ref{thm2part1} establishes the asymptotic behavior $u \rightarrow -\tau t + \frac{1}{2}x'Ax$ at infinity using the nonlinear perturbation method from~\cite{zbw18}, which ensures bounded Hessian and uniform parabolicity;  Section~\ref{thm2part2} analyzes the periodic difference quotient of $u$ in both $x$ and $t$ variables, proving that $u - (-\tau t + \frac{1}{2}x'Ax)$ reduces to a constant after subtracting a linear function in $x$ and a periodic function from Theorem~\ref{existence}.

In this article, we denote $D_x^i D_t^j u$ as $u$'s mixed derivative, which is $i\text{-th}$ order in $x$ and $j\text{-th}$ order in $t$. For $m\in\mathbb{N}$, $u\in C^{2m, m}$ if $D_x^i D_t^j f$ is continous for $i,j\in\mathbb{N}$ and $i+2 j \leq 2m$. For $\alpha\in(0,1)$ and bounded domain $\mathcal{D}\subset\mathbb{R}^{n+1}$, we say $u\in C^{2m+\alpha,\frac{2m+\alpha}{2}}\left(\overline{\mathcal{D}}\right)$ if $u\in C^{2m,m}\left(\overline{\mathcal{D}}\right)$ and
\[
\sup_{\substack{i,j\in\mathbb{N}\\i+2j=2m}}\sup_{\substack{\left(x_1,t_1\right),\left(x_2,t_2\right)\in\mathcal{D} \\ \left(x_1,t_1\right)\ne\left(x_2,t_2\right)}}\frac{\left|D_x^iD_t^ju\left(x_1,t_1\right)-D_x^iD_t^ju\left(x_2,t_2\right)\right|}{\left|\left(x_1,t_1\right)-\left(x_2,t_2\right)\right|_p^{\alpha}}<\infty,
\]
where the parabolic distance between $X=(x,t),Y=(y,s)\in \mathbb{R}^{n+1}$ is denoted by 
\[
\left|X-Y\right|_p:=\left(|x-y|^2+|t-s|\right)^{\frac{1}{2}}. 
\] 
It is easy to verify that $C^{2m+\alpha,\frac{2m+\alpha}{2}}\left(\mathbb{T}^{n+1}\right)$, equipped with the norm 
\[
\left\|u\right\|_{C^{2m+\alpha,\frac{2m+\alpha}{2}}\left([0,1]^{n+1}\right)}, 
\]
is a Banach space. We adopt the following shorthand for partial derivatives:
\[
u_i:=D_{x_i}u,\quad u_{ij}:=D_{x_ix_j}u,\quad u_{ijk}:=D_{x_ix_jx_k}u,\quad u_{ijkl}:=D_{x_ix_jx_kx_l}u,
\]
\[
u_{it}:=D_{x_i}D_tu,\quad u_{ijt}:=D_{x_ix_j}D_tu.
\]

A subset $\mathcal{D}$ in $\mathbb{R}^{n+1}$ is called a bowl-shaped domain if
\[
\mathcal{D}(t):=\left\{x \in \mathbb{R}^n:(x, t) \in \mathcal{D}\right\}
\] 
are convex for each $t\le0$ and $\mathcal{D}\left(t_1\right) \subset \mathcal{D}\left(t_2\right)$ whenever $t_1 \leq t_2$ with $D\left(t_2\right)\neq\emptyset$. The parabolic boundary of a bounded bowl-shaped domain $\mathcal{D}$ is defined as
\[
\partial_p \mathcal{D}:=\left(\overline{\mathcal{D}(t_0)}\times\{t_0\}\right)\cup  \bigcup_{t \le0}(\partial \left(\mathcal{D}(t)\right) \times\{t\}),
\]
where $t_0:=\inf\{t\le0:(x,t)\in \mathcal{D}\}$. A canonical example is the parabolic ball: 
\[
E_{\delta}(x_0,t_0):=\{(x,t)\in\mathbb{R}^{n+1}_-:|x-x_0|^2-(t-t_0)< \delta^2,\quad t\le t_0 \},\quad \delta>0.
\]
We abbreviate $E_{\delta}:=E_{\delta}\left(0_n,0\right)$, where $0_n$ denotes the origin in $\mathbb{R}^n$. 

The parabolic distance from $Z\in \mathbb{R}^{n+1}_-$ to a set $\mathcal{D}\subset\mathbb{R}^{n+1}_-$ is defined by
\[
\text{dist}_p\left(Z,\mathcal{D}\right):=\inf\left\{\left|Z-Z_1\right|_p: Z_1\in\mathcal{D} \right\}.
\]

\quad

\section{a calculus lemma related to the periodicity}\label{calculas}

\begin{lemma}\label{periodicity lemma}
Let $u\in C^{2,1}\left(\mathbb{R}^{n+1}\right)$ satisfy \eqref{periodic}. Then
\[
\int_{[0,1]^{n+1}}\left(1-u_t\right)\det \left(I+D^2_xu\right)dxdt=1.
\]
\end{lemma}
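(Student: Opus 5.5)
Writing $\det(I+D_x^2u)=\sum_{k=0}^n \sigma_k(D_x^2u)$, where $\sigma_k$ is the $k$-th elementary symmetric function of the eigenvalues and $\sigma_0=1$, the integrand splits as
\[
(1-u_t)\det(I+D_x^2u)=\sum_{k=0}^n\sigma_k(D_x^2u)-\sum_{k=0}^n u_t\,\sigma_k(D_x^2u).
\]
The constant term $\sigma_0=1$ integrates to $1$ over $[0,1]^{n+1}$. The plan is to show that every other term integrates to zero by periodicity. For the purely spatial terms I will use divergence structure. For the terms containing $u_t$ I will use a time-integration identity.

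\textbf{Spatial terms.} Each $\sigma_k(D_x^2u)$ with $k\ge1$ is a divergence in $x$:
\[
\sigma_k(D_x^2u)=\frac1k\,\partial_{x_i}\bigl(\sigma_k^{ij}(D_x^2u)\,u_j\bigr),
\]
where $\sigma_k^{ij}=\partial\sigma_k/\partial u_{ij}$. This is the divergence-free property of the cofactor-type tensor, $\partial_{x_i}\sigma_k^{ij}(D_x^2u)=0$. Since $u$ is only $C^{2,1}$, that identity involves third derivatives. I will therefore first prove it for a mollification $u^\varepsilon$, which is smooth and still periodic. The integrand involves only $u_t$ and $D_x^2u$, which converge uniformly on the compact cube, so I can pass to the limit at the end. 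For fixed $t$, the $x$-integral over $[0,1]^n$ of a divergence of a periodic field vanishes, because the contributions of opposite faces cancel.

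\textbf{Mixed terms.} For $k=0$, $\int_0^1 u_t\,dt=u(x,1)-u(x,0)=0$ by time periodicity. For $k\ge1$ I will show
\[
\int_{[0,1]^{n+1}}u_t\,\sigma_k(D_x^2u)\,dx\,dt=0.
\]
For smooth periodic $u$, integrate by parts in $x$ twice; boundary terms vanish by periodicity and the divergence-free property. This gives
\[
\int u_t\,\sigma_k\,dx=\frac1k\int u_t\,\partial_i(\sigma_k^{ij}u_j)\,dx=-\frac1k\int u_{ti}\,\sigma_k^{ij}u_j\,dx=\frac1k\int u\,\sigma_k^{ij}u_{tij}\,dx=\frac1k\int u\,\partial_t\sigma_k(D_x^2u)\,dx.
\]
On the other hand, $\partial_t\bigl(u\,\sigma_k(D_x^2u)\bigr)=u_t\sigma_k+u\,\partial_t\sigma_k$. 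Integrating over the whole cell kills the left side, by periodicity in $t$. Hence
\[
I=-\int u\,\partial_t\sigma_k=-kI,
\]
so $(1+k)I=0$ and $I=0$. An equivalent route is to note that $u_t\sigma_k$ is the time derivative of a functional of the form $\frac{1}{k+1}\int u\,\sigma_k\,dx$ up to spatial divergences, which shows the same thing.

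\textbf{Main obstacle.} The delicate step is justifying these manipulations, since they use $u_{tij}$ and third spatial derivatives. I will do all computations for the smooth periodic mollification $u^\varepsilon=u*\rho_\varepsilon$. This gives the identity $\int(1-u^\varepsilon_t)\det(I+D^2_xu^\varepsilon)=1$ exactly. Then let $\varepsilon\to0$, using $u^\varepsilon_t\to u_t$ and $D_x^2u^\varepsilon\to D_x^2u$ uniformly, which holds because $u\in C^{2,1}$ and is periodic, hence uniformly continuous.
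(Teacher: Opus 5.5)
Your argument is correct, but it takes a different route from the paper's.

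\textbf{Your route.} You expand $\det(I+D_x^2u)=\sum_k\sigma_k(D_x^2u)$ and handle each homogeneous piece separately.
\begin{itemize}
\item The pure terms vanish on every time slice, since $\sigma_k=\frac1k\partial_i(\sigma_k^{ij}u_j)$ has a periodic flux.
\item The mixed terms vanish through the identity $\frac{d}{dt}\int_{[0,1]^n}u\,\sigma_k(D_x^2u)\,dx=(k+1)\int_{[0,1]^n}u_t\,\sigma_k(D_x^2u)\,dx$. This expresses the self-adjointness of the divergence-form operator $\sigma_k^{ij}\partial_{ij}$ on periodic functions.
\end{itemize}
Every boundary term you meet is the flux of a periodic field, so it cancels on opposite faces. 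In effect, you carry out the paper's $n=1$ computation (where $\int u_tu_{xx}=-\frac12\int(u_x^2)_t$) for each $\sigma_k$ in every dimension.

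\textbf{The paper's route.} The paper keeps the operator whole. With $w=u+\frac12|x|^2-t$, it writes $-w_t\det D_x^2w$ as the Jacobian $\partial(D_xw,\,x\cdot D_xw-w)/\partial(x,t)$ of the parabolic normal map. It then uses only the classical divergence-free-rows lemma for Jacobian determinants. The price is that the auxiliary function $v=x\cdot D_xw-w-t$ is not periodic. Nonzero boundary contributions must therefore be tracked and removed by a second cofactor expansion and the telescoping identity \eqref{onebyone}, with $n=1$, $n=2$ and $n\ge3$ treated separately.

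\textbf{Comparison.} Your version is shorter and independent of dimension. Its cost is that it needs the divergence-free property $\partial_i\sigma_k^{ij}(D^2u)=0$ for every $k$ (Reilly's identity), not only the determinant case $k=n$. Note that your step $\int\partial_t(u\,\sigma_k)=0$ genuinely uses the time-periodicity of $u$ itself, not only of $u_t$. That is essential: replacing $u$ by $u+ct$ changes the value of the integral to $1-c$. This is supplied by the hypothesis. The mollification step is the same in both arguments.
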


\begin{proof}
We prove the indentity for smooth $u$. Then the case for $u\in C^{2,1}\left(\mathbb{R}^{n+1}\right)$ can be obtained by approximation. For $n=1$, it follows that by direct calculations
\[
\int_{[0,1]^2}\left(1-u_t\right)\left(1+u_{xx}\right)dxdt=1+\int_{[0,1]^2}\left(u_{xx}-u_t-u_tu_{xx}\right)dxdt.
\]
Applying the Divergence Theorem and the periodicity of $u,u_x,u_t$ gives that
\[
\int_{[0,1]^2}u_tdxdt=\int_{[0,1]^2}u_{xx}dxdt=0
\]
and
\[
\begin{aligned}
\int_{[0,1]^2}u_tu_{xx}dxdt&=\int_0^1\left(u_tu_x\right)(1,t)dt-\int_0^1\left(u_tu_x\right)(0,t)dt-\int_{[0,1]^2}u_{xt}u_xdxdt\\
&=0+\frac{1}{2}\int_{[0,1]^2}\left(u_x^2\right)_tdxdt\\
&=\frac{1}{2}\left(\int_0^1\left(u_x^2\right)(x,1)dx-\int_0^1\left(u_x^2\right)(x,0)dx\right)=0,
\end{aligned}
\]
implying that
\[
\int_{[0,1]^2}\left(1-u_t\right)\left(1+u_{xx}\right)dxdt=1.
\]

For $n\ge2$, denote 
\[
w:=u+\frac{1}{2}|x|^2-t,\quad v:=x\cdot D_xw-w-t.
\]
Then $\left(1-u_t\right)\det \left(I+D^2_xu\right)=-w_t\det D_x^2w$ and by direct computations
\[
\begin{aligned}
-w_t\det D_x^2w
&=\frac{\partial\left(D_xw,x\cdot D_xw-w\right)}{\partial\left(x,t\right)}\\
&=\frac{\partial\left(D_xw,v\right)}{\partial\left(x,t\right)}+\frac{\partial\left(D_xw,t\right)}{\partial\left(x,t\right)}\\
&=\sum\limits_{i=1}^n(-1)^{n+1+i}\left(v\frac{\partial\left(D_xw\right)}{\partial\left(\tilde{x_i},t\right)}\right)_i+\left(v\det D_x^2w\right)_t\\
&\quad- v\left(\sum\limits_{i=1}^n(-1)^{n+1+i}\left(\frac{\partial\left(D_xw\right)}{\partial\left(\tilde{x_i},t\right)}\right)_i+\left(\det D_x^2w\right)_t\right)+\frac{\partial\left(D_xw,t\right)}{\partial\left(x,t\right)},
\end{aligned}
\]
where
\[
D_xw:=\left(w_1,\cdots,w_n\right)
\]
and for a vector $x\in\mathbb{R}^n$ and $1\le i\le n$, $\tilde{x_i}$ is defined by
\[
\tilde{x_i}:=\left(x_1,\cdots,x_{i-1},x_{i+1}\cdots,x_n\right)\in\mathbb{R}^{n-1}.
\]
By the lemma of divergence-free rows in Chapter 8 of \cite{e10}
\[
\sum\limits_{i=1}^n(-1)^{n+1+i}\left(\frac{\partial\left(D_xw\right)}{\partial\left(\tilde{x_i},t\right)}\right)_i+\left(\det D_x^2w\right)_t=0,
\]
impling that
\[
\left(1-u_t\right)\det \left(I+D^2_xu\right)=\sum\limits_{i=1}^n(-1)^{n+1+i}\left(v\frac{\partial\left(D_xw\right)}{\partial\left(\tilde{x_i},t\right)}\right)_i+\left(v\det D_x^2w\right)_t+\frac{\partial\left(D_xw,t\right)}{\partial\left(x,t\right)}.
\]

It follows from the Divergence Theorem that
\begin{equation}\label{div1}
\begin{aligned}
\int_{[0,1]^{n+1}}\left(v\det D_x^2w\right)_tdxdt&=\int_{[0,1]^{n+1}}\left(v\det D_x^2w\right)_tdxdt\\
&=\int_{[0,1]^n_{t=1}}v\det D_x^2wdx-\int_{[0,1]^n_{t=0}}v\det D_x^2wdx
\end{aligned}
\end{equation}
and for $1\le i\le n$
\begin{equation}\label{div2}
\int_{[0,1]^{n+1}}\left(v\frac{\partial\left(D_xw\right)}{\partial\left(\tilde{x_i},t\right)}\right)_idxdt=\int_{[0,1]^n_{x_i=1}}v\frac{\partial\left(D_xw\right)}{\partial\left(\tilde{x_i},t\right)}d\tilde{x_i}dt-\int_{[0,1]^n_{x_i=0}}v\frac{\partial\left(D_xw\right)}{\partial\left(\tilde{x_i},t\right)}d\tilde{x_i}dt,
\end{equation}
where for $a\in[0,1]$
\[
[0,1]^n_{t=a}:=[0,1]^{n+1}\cap\left\{(x,t)\in\mathbb{R}^{n+1}:t=a\right\},
\] 
\[
[0,1]^n_{x_i=a}:=[0,1]^{n+1}\cap\left\{(x,t)\in\mathbb{R}^{n+1}:x_i=a\right\},\quad 1\le i\le n.
\]
Combined with the periodicity of $v\det D_x^2w$, we derive from \eqref{div1} that
\[
\int_{[0,1]^{n+1}}\left(v\det D_x^2w\right)_tdxdt=0.
\]
We notice that
\[
v=\left(x_iw_i-\frac{1}{2}x_i^2\right)+\left(\tilde{x_i}\cdot\tilde{w_i}-u-\frac{1}{2}\left|\tilde{x_i}\right|^2\right).
\]
When we fix $\tilde{x_i}$ and change $x_i$ from $0$ to $1$, the first term varies from $0$ to $w_i-\frac{1}{2}=u_i+\frac{1}{2}$ and the second term preserves its value due to the periodicity of $u$ and $D_xu,D_x^2u$. Thus from \eqref{div2}
\begin{equation}\label{n+1ton}
\int_{[0,1]^{n+1}}\left(v\frac{\partial\left(D_xw\right)}{\partial\left(\tilde{x_i},t\right)}\right)_idxdt=\frac{1}{2}\int_{[0,1]^n_{x_i=1}}\left(2u_i+1\right)\frac{\partial\left(D_xw\right)}{\partial\left(\tilde{x_i},t\right)}d\tilde{x_i}dt.
\end{equation}
Applying the expansion for the determinent and the lemma of divergence-free rows
\begin{equation}\label{dfs1}
\begin{aligned}
\left(2u_i+1\right)\frac{\partial\left(D_xw\right)}{\partial\left(\tilde{x_i},t\right)}
&=\frac{\partial\left(w_1,\cdots,u_i^2+u_i,\cdots,w_n\right)}{\partial\left(\tilde{x_i},t\right)}\\
&=\sum\limits_{j<i}(-1)^{i+j}\left(\left(u_i^2+u_i\right)\frac{\partial\left(\tilde{w_i}\right)}{\partial\left(\tilde{x_{j,i}},t\right)}\right)_j\\
&\quad-\sum\limits_{n\ge j>i}(-1)^{i+j}\left(\left(u_i^2+u_i\right)\frac{\partial\left(\tilde{w_i}\right)}{\partial\left(\tilde{x_{i,j}},t\right)}\right)_j\\
&\quad+(-1)^{i+n}\left(\left(u_i^2+u_i\right)\frac{\partial\left(\tilde{w_i}\right)}{\partial\left(\tilde{x_i}\right)}\right)_t,
\end{aligned}
\end{equation}
where for a vector $x\in\mathbb{R}^n$ and $1\le i<j\le n$, $\tilde{x_{i,j}}$ is defined by
\[
\tilde{x_{i,j}}:=\left(x_1,\cdots,x_{i-1},x_{i+1}\cdots,x_{j-1},x_{j+1},\cdots,x_n\right)\in\mathbb{R}^{n-2}.
\]
The Divergence Theorem gives that for $j<i$
\begin{equation}\label{dfs2}
\begin{aligned}
\int_{[0,1]^n_{x_i=1}}\left(\left(u_i^2+u_i\right)\frac{\partial\left(\tilde{w_i}\right)}{\partial\left(\tilde{x_{j,i}},t\right)}\right)_jd\tilde{x_i}dt
&=\int_{[0,1]^{n-1}_{x_i,x_j=1}}\left(u_i^2+u_i\right)\frac{\partial\left(\tilde{w_i}\right)}{\partial\left(\tilde{x_{j,i}},t\right)}d\tilde{x_{j,i}}dt\\
&\quad-\int_{[0,1]^{n-1}_{x_i=1,x_j=0}}\left(u_i^2+u_i\right)\frac{\partial\left(\tilde{w_i}\right)}{\partial\left(\tilde{x_{j,i}},t\right)}d\tilde{x_{j,i}}dt,
\end{aligned}
\end{equation}
where for $a,b\in[0,1]$
\[
[0,1]^{n-1}_{x_i=a,x_j=b}:=[0,1]^{n+1}\cap\left\{(x,t)\in\mathbb{R}^{n+1}:x_i=a,x_j=b\right\}.
\]
It follows from the periodicity of $u_i^2+u_i$ that for $j<i$
\[
\int_{[0,1]^n_{x_i=1}}\left(\left(u_i^2+u_i\right)\frac{\partial\left(\tilde{w_i}\right)}{\partial\left(\tilde{x_{j,i}},t\right)}\right)_jd\tilde{x_i}dt=0.
\]
Similarly we calculate the left two terms in \eqref{dfs1} and then derive that for $j>i$
\[
\int_{[0,1]^n_{x_i=1}}\left(\left(u_i^2+u_i\right)\frac{\partial\left(\tilde{w_i}\right)}{\partial\left(\tilde{x_{i,j}},t\right)}\right)_jd\tilde{x_i}dt=0
\]
and
\[
\int_{[0,1]^n_{x_i=1}}\left(\left(u_i^2+u_i\right)\frac{\partial\left(\tilde{w_i}\right)}{\partial\left(\tilde{x_i}\right)}\right)_td\tilde{x_i}dt=0.
\]
Therefore by \eqref{n+1ton} and \eqref{dfs1}, we obtain that
\[
\int_{[0,1]^{n+1}}\left(v\frac{\partial\left(D_xw\right)}{\partial\left(\tilde{x_i},t\right)}\right)_idxdt=0.
\]

Utill now we arrive at
\[
\int_{[0,1]^{n+1}}\left(1-u_t\right)\det \left(I+D^2_xu\right)dxdt=\int_{[0,1]^{n+1}}\frac{\partial\left(D_xw,t\right)}{\partial\left(x,t\right)}dxdt.
\]
Direct calculations give that for $n\ge3$
\begin{equation}\label{onebyone}
\begin{aligned}
\frac{\partial\left(D_xw,t\right)}{\partial\left(x,t\right)}
&=\frac{\partial\left(\hat{x_1},\check{w_2},t\right)}{\partial\left(x,t\right)}+\frac{\partial\left(u_1,\check{w_2},t\right)}{\partial\left(x,t\right)}\\
&=\frac{\partial\left(\hat{x_2},\check{w_3},t\right)}{\partial\left(x,t\right)}+\frac{\partial\left(u_1,\check{w_2},t\right)}{\partial\left(x,t\right)}+\frac{\partial\left(\check{x_1},u_2,\check{w_3},t\right)}{\partial\left(x,t\right)}\\
&=\cdots\\
&=\frac{\partial\left(x,t\right)}{\partial\left(x,t\right)}+\frac{\partial\left(u_1,\check{w_2},t\right)}{\partial\left(x,t\right)}+\sum_{i=2}^{n-1}\frac{\partial\left(\hat{x_{i-1}},u_i,\check{w_{i+1}}\right)}{\partial\left(x,t\right)}+\frac{\partial\left(\hat{x_{n-1}},u_n,t\right)}{\partial\left(x,t\right)}\\
&=1+\frac{\partial\left(u_1,\check{w_2},t\right)}{\partial\left(x,t\right)}+\sum_{i=2}^{n-1}\frac{\partial\left(\hat{x_{i-1}},u_i,\check{w_{i+1}}\right)}{\partial\left(x,t\right)}+\frac{\partial\left(\hat{x_{n-1}},u_n,t\right)}{\partial\left(x,t\right)},
\end{aligned}
\end{equation}
where for a vector $x\in\mathbb{R}^n$ and $1\le i\le n$, we define
\[
\hat{x_i}:=\left(x_1,\cdots,x_i\right)\in\mathbb{R}^i,\quad \check{x_i}:=\left(x_i,\cdots,x_n\right)\in\mathbb{R}^{n-i}.
\]
It follows that for every $2\le i\le n-1$
\[
\begin{aligned}
&\int_{[0,1]^{n+1}}\frac{\partial\left(\hat{x_{i-1}},u_i,\check{w_{i+1}}\right)}{\partial\left(x,t\right)}dxdt\\
&=\int_{[0,1]^{n+1}}\left(\sum_{j=1}^n(-1)^{i+j}u_{ij}\frac{\partial\left(\hat{x_{i-1}},\check{w_{i+1}}\right)}{\partial\left(\tilde{x_i},t\right)}-(-1)^{i+n}u_{it}\frac{\partial\left(\hat{x_{i-1}},\check{w_{i+1}}\right)}{\partial\left(x\right)}\right)dxdt\\
&=\int_{[0,1]^{n+1}}\left(\sum_{j=1}^n(-1)^{i+j}\left(u_i\frac{\partial\left(\hat{x_{i-1}},\check{w_{i+1}}\right)}{\partial\left(\tilde{x_i},t\right)}\right)_j-(-1)^{i+n}\left(u_i\frac{\partial\left(\hat{x_{i-1}},\check{w_{i+1}}\right)}{\partial\left(x\right)}\right)_t\right)dxdt\\
&=\sum_{j=1}^n(-1)^{i+j}\int_{[0,1]^n_{x_j=1}}u_i\frac{\partial\left(\hat{x_{i-1}},\check{w_{i+1}}\right)}{\partial\left(\tilde{x_i},t\right)}d\hat{x_j}dt-(-1)^{i+n}\int_{[0,1]^n_{t=1}}u_i\frac{\partial\left(\hat{x_{i-1}},\check{w_{i+1}}\right)}{\partial\left(x\right)}dx\\
&\quad-\sum_{j=1}^n(-1)^{i+j}\int_{[0,1]^n_{x_j=0}}u_i\frac{\partial\left(\hat{x_{i-1}},\check{w_{i+1}}\right)}{\partial\left(\tilde{x_i},t\right)}d\hat{x_j}dt+(-1)^{i+n}\int_{[0,1]^n_{t=0}}u_i\frac{\partial\left(\hat{x_{i-1}},\check{w_{i+1}}\right)}{\partial\left(x\right)}dx\\
&=0,
\end{aligned}
\]
where the above four equalities are valid up to down from the expansion for the determinent, the lemma of divergence-free rows, the Divergence Theorem and the periodicity. Similarly there is
\[
\int_{[0,1]^{n+1}}\frac{\partial\left(u_1,\check{w_2},t\right)}{\partial\left(x,t\right)}dxdt=0.
\]
It is obvious that
\[
\int_{[0,1]^{n+1}}\frac{\partial\left(\hat{x_{n-1}},u_n,t\right)}{\partial\left(x,t\right)}dxdt=\int_{[0,1]^{n+1}}u_{nn}dxdt=0.
\]
By \eqref{onebyone}, we arrived at
\[
\int_{[0,1]^{n+1}}\frac{\partial\left(D_xw,t\right)}{\partial\left(x,t\right)}dxdt=1
\]
for $n\ge3$. The case for $n=2$ is much easier. Consequently we attain that
\[
\int_{[0,1]^{n+1}}\left(1-u_t\right)\det \left(I+D^2_xu\right)dxdt=1.
\]
\end{proof}

\section{proof of Theorem \ref{existence}}\label{thm1}

\begin{proof}
The solution's uniqueness is established through the application of the strong maximum principle combined with the inherent periodicity properties.

The necessity follows from Lemma \ref{periodicity lemma}. The main difficulty lies from proving the sufficiency. We denote for $u\in C^{4+\alpha,\frac{4+\alpha}{2}}\left(\mathbb{T}^{n+1}\right)$ and $\sigma\in[0,1]$
\[
\mathcal{S}_\sigma(u):=\left(1-u_t\right)\det \left(D_x^2u+I\right)-\sigma f-(1-\sigma).
\]
By Lemma \ref{periodicity lemma}, $\mathcal{S}_\sigma$ is actually an operator from $C^{4+\alpha,\frac{4+\alpha}{2}}\left(\mathbb{T}^{n+1}\right)$ to $C^{2+\alpha,\frac{2+\alpha}{2}}\left(\mathbb{T}^{n+1}\right)$ for every $\sigma\in[0,1]$. Define
\[
\mathcal{C}:=C^{4+\alpha,\frac{4+\alpha}{2}}\left(\mathbb{T}^{n+1}\right)\cap\left\{u\in C^{2,1}\left(\mathbb{R}^{n+1}\right):1-u_t>0,\quad D_x^2u+I>0\text{ in }\mathbb{R}^{n+1}\right\}
\] 
\[
\mathcal{T}:=\left\{\sigma\in[0,1]: \exists u\in \mathcal{C} \text{ s.t. }\mathcal{S}_\sigma(u)=0\text{ in }\mathbb{R}^{n+1} \right\}.
\]
Obviously there is $0\in\mathcal{T}$. Combined with the following Lemmas \ref{open} and \ref{closed}, $\mathcal{T}$ is open and closed. Thus $\mathcal{T}=[0,1]$. The theorem is proved.
\end{proof}

First we prove an existence result for a heat equation with periodic nonhomogenous term by using the Fourier expansions.

\begin{prop}\label{periodic heat equation}
Let $\alpha\in(0,1)$ and $h\in C^{\alpha,\frac{\alpha}{2}}\left(\mathbb{T}^{n+1}\right)$. Then there uniquely exists a solution $u\in C^{2+\alpha,\frac{2+\alpha}{2}}\left(\mathbb{T}^{n+1}\right)$ to the equation
\begin{equation}\label{nonhomogenous equation}
u_t-\Delta_xu=h\quad \text{in}\quad\mathbb{R}^{n+1}.
\end{equation}
\end{prop}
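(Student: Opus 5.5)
Note first that membership in $C^{\alpha,\frac{\alpha}{2}}\left(\mathbb{T}^{n+1}\right)$ includes the zero-average condition \eqref{average}, so $\hat h(0)=0$; this is exactly the solvability condition. I will solve \eqref{nonhomogenous equation} by Fourier series and obtain the Schauder regularity from local parabolic estimates.

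\textbf{Fourier construction.} Write $h(x,t)=\sum_{(k,l)\ne0}\hat h(k,l)e^{2\pi i(k\cdot x+lt)}$ with $k\in\mathbb{Z}^n$, $l\in\mathbb{Z}$. Every symbol $2\pi i l+4\pi^2|k|^2$ with $(k,l)\neq 0$ is nonzero, so the natural definition is
\[
\hat u(k,l):=\frac{\hat h(k,l)}{2\pi i l+4\pi^2|k|^2},\qquad \hat u(0,0)=0.
\]
Since $|2\pi il+4\pi^2|k|^2|\ge c>0$ for $(k,l)\ne0$ and $h\in L^2$, this defines $u\in L^2\left(\mathbb{T}^{n+1}\right)$. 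It has zero average and is a periodic distributional solution of $u_t-\Delta_x u=h$.

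\textbf{Regularity.} To avoid a multiplier theorem in H\"older spaces, I would argue locally. The coefficients satisfy $|\hat u(k,l)|(1+|k|^2+|l|)\lesssim|\hat h(k,l)|$, so $u,\,D_x^2u,\,u_t\in L^2$ on the torus and $u$ is a strong $W^{2,1}_2$ periodic solution. Interior parabolic $L^p$ estimates, bootstrapped, combined with interior Schauder estimates applied on cylinders of fixed size covering $[0,1]^{n+1}$, then give
\[
\|u\|_{C^{2+\alpha,\frac{2+\alpha}{2}}}\le C\left(\|h\|_{C^{\alpha,\frac{\alpha}{2}}}+\|u\|_{L^2}\right)\le C\|h\|_{C^{\alpha,\frac{\alpha}{2}}}.
\]
Periodicity makes these local bounds global.

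An alternative route to the same conclusion is a continuity-free approach: solve the Cauchy problem from $t=0$ with periodic initial data and obtain a time-periodic solution as a fixed point of the period map. That map is a contraction on zero-mean functions, since the heat semigroup decays like $e^{-4\pi^2 s}$ on them. This yields a solution directly in H\"older classes.

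\textbf{Uniqueness.} If $w$ is a periodic zero-average solution of the homogeneous equation, then every Fourier coefficient satisfies $(2\pi il+4\pi^2|k|^2)\hat w=0$, so $\hat w(k,l)=0$ for $(k,l)\ne0$, and the zero average gives $\hat w(0,0)=0$; hence $w\equiv0$. One can also argue with the maximum principle: $\max w$ over a period is attained, and then the strong maximum principle together with periodicity in $t$ forces $w$ to be constant.

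\textbf{Main obstacle.} I expect the difficulty to lie in upgrading the $L^2$ Fourier solution to the H\"older class $C^{2+\alpha,\frac{2+\alpha}{2}}$. I would handle it with the local $L^p$/Schauder bootstrap above rather than with H\"older multiplier estimates.
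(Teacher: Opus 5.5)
Your proof is correct, but it is organized differently from the paper's. The paper uses Fourier series only for $h\in C^\infty(\mathbb{T}^{n+1})$. There, integration by parts gives rapid decay of $\hat h$, so every differentiated series converges absolutely and uniformly and $u$ is classical. For H\"older $h$ the paper then approximates by smooth $h_{(k)}$ and bounds $u_{(k)}$ by periodic Schauder estimates. It gets the zeroth-order bound from the Harnack inequality (oscillation of $u_{(k)}$ controlled by $\|h_{(k)}\|_{C^0}$, plus zero average) and passes to the limit with Arzel\`a--Ascoli.

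You instead run the Fourier construction directly at the $L^2$ level. Parseval then gives both $u\in W^{2,1}_2$ and the lower-order bound $\|u\|_{L^2}\le C\|h\|_{L^2}$ at once. This replaces the Harnack step and removes any need to approximate $h$. It also sidesteps the paper's assertion that smooth functions converge to $h$ in the $C^{\alpha,\frac{\alpha}{2}}$ norm. That assertion is false in general, although the paper's argument only needs uniform H\"older bounds plus uniform convergence, which mollification does give.

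The cost on your side is that you need regularity for a strong solution, not merely a priori estimates for classical ones. For the constant-coefficient operator $D_t-\Delta_x$ this is harmless: mollification in $(x,t)$ commutes with it, and the interior estimates for $u*\rho_\epsilon$ pass to the limit. You should say so explicitly.

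Finally, your uniqueness argument via $(2\pi il+4\pi^2|k|^2)\hat w(k,l)=0$ supplies something the paper's proof of this proposition leaves implicit. Elsewhere the paper appeals to the strong maximum principle plus periodicity, which is your second argument.
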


\begin{proof}
Firstly we solve the equation \eqref{nonhomogenous equation} for $h\in C^{\infty}\left(\mathbb{T}^{n+1}\right)$. Due to its smoothness and Theorem 3.3.6 in \cite{g14}, $h$ has pointwise convergent Fourier expansions
\[
h(x,t)=\sum\limits_{m\in\mathbb{Z}^{n+1}}\hat{h}(m)e^{2\pi i(x,t)\cdot m},
\]
where $m=\left(m_1,\cdots,m_n,m_{n+1}\right)$ and the Fourier coefficients $\hat{h}(m)$ are defined by
\[
\hat{h}(m):=\int_{[0,1]^{n+1}}h(x,t)e^{-2\pi i(x,t)\cdot m}dxdt.
\]
It follows that $\hat{h}(0,\cdots,0,0)=0$. We define
\[
\hat{u}(0,\cdots,0,0)=0
\]
and for $m\ne(0,\cdots,0,0)$
\[
\hat{u}(m):=\frac{\hat{h}(m)}{2\pi im_{n+1}+\sum\limits_{i=1}^n4\pi^2m_i^2},\quad u(x,t):=\sum\limits_{m\in\mathbb{Z}^{n+1}}\hat{u}(m)e^{2\pi i(x,t)\cdot m}.
\]
In order to prove that $u\in C^{\infty}\left(\mathbb{T}^{n+1}\right)$, we need another representation for $\hat{h}(m)$ with decay in $m$. For fixed $m\in\mathbb{Z}^{n+1}\setminus\left\{(0,\cdots,0,0)\right\}$, there exist integers $1\le i_1,\cdots,i_r\le n+1$, such that $m_{i_s}=0$ for $1\le s\le r$ and $m_i\ne0$ for $i\notin\left\{i_1,\cdots,i_r\right\}$. Then for every $k\in \mathbb{N}^+$
\[
\begin{aligned}
\hat{h}(m)&=(-2\pi i)^{\left|\alpha_m^k\right|}\left(m^{\alpha_m^k}\right)^{-1}\int_{[0,1]^n}D_{x,t}^{\alpha_m^k}h(x,t)e^{-2\pi i(x,t)\cdot m}dxdt\\
&=(-2\pi i)^{\left|\alpha_m^k\right|}\left(m^{\alpha_m^k}\right)^{-1}\left(D_{x,t}^{\alpha_m^k}h\right)^{\wedge}(m),
\end{aligned}
\]
where $\alpha_m^k$ is a multiple index defined by
\[
\left(\alpha_m^k\right)_i:=
\left\{\begin{array}{ll}
k & i\notin\left\{i_1,\cdots,i_r\right\}, \\
0 & i\in\left\{i_1,\cdots,i_r\right\},
\end{array}\right.
\]
and
\[
D_{x,t}^{\alpha_m^k}h:=D_x^{\left(\left(\alpha_m^k\right)_1,\cdots,\left(\alpha_m^k\right)_n\right)}D_t^{\left(\alpha_m^k\right)_{n+1}}h.
\]
For any multiple index $\alpha=\left(\alpha_1,\cdots,\alpha_n,\alpha_{n+1}\right)$, we choose $k$ large enough such that the series
\[
\begin{aligned}
&\sum\limits_{m\in\mathbb{Z}^{n+1}}\hat{u}(m)(2\pi i)^{|\alpha|}m^\alpha e^{2\pi i(x,t)\cdot m}\\
&=\sum\limits_{m\in\mathbb{Z}^{n+1}}\frac{\hat{h}(m)}{2\pi im_{n+1}+\sum\limits_{i=1}^n4\pi^2m_i^2}(2\pi i)^{|\alpha|}m^\alpha e^{2\pi i(x,t)\cdot m}\\
&=\sum\limits_{m\in\mathbb{Z}^{n+1}}\frac{(-2\pi i)^{\left|\alpha_m^k\right|}\left(m^{\alpha_m^k}\right)^{-1}\left(D_{x,t}^{\alpha_m^k}h\right)^{\wedge}(m)}{2\pi im_{n+1}+\sum\limits_{i=1}^n4\pi^2m_i^2}(2\pi i)^{|\alpha|}m^\alpha e^{2\pi i(x,t)\cdot m}
\end{aligned}
\]
are absolutely uniformly convergent, where the Fourier coefficients $\left(D_{x,t}^{\alpha_m^k}h\right)^{\wedge}(m)$ are uniformly bounded by the Plancherel's indentity (see Proposition 3.1.16 in \cite{g14}) 
\[
\sum_{m\in\mathbb{Z}^{n+1}}\left|\left(D_{x,t}^{\alpha_m^k}h\right)^{\wedge}(m)\right|^2=\int_{[0,1]^{n+1}}\left|D_{x,t}^{\alpha_m^k}h\right|^2dxdt<\infty.
\]
Thus
\[
D^\alpha_{x,t} u(x,t)=\sum\limits_{m\in\mathbb{Z}^{n+1}}\hat{u}(m)(2\pi i)^{|\alpha|}m^\alpha e^{2\pi i(x,t)\cdot m}.
\]
By the arbitrariness of $\alpha$, $u\in C^{\infty}\left(\mathbb{T}^{n+1}\right)$. Additionally $u_t-\Delta_xu=h$ in $\mathbb{R}^{n+1}$. 

To solve the equation for $h\in C^{\alpha,\frac{\alpha}{2}}\left(\mathbb{T}^{n+1}\right)$, we use the approximation argument. Actually, there exist $\left\{h_{(k)}\right\}_{k=1}^\infty\subset C^{\infty}\left(\mathbb{T}^{n+1}\right)$, such that
\begin{equation}\label{approx}
\left\|h_{(k)}-h\right\|_{C^{\alpha,\frac{\alpha}{2}}\left(\mathbb{T}^{n+1}\right)}\rightarrow0\quad\text{as}\quad k\rightarrow\infty.
\end{equation}
For each $k$, there exists $u_{(k)}\in C^{\infty}\left(\mathbb{T}^{n+1}\right)$, such that $D_tu_{(k)}-\Delta_xu_{(k)}=h_{(k)}$ in $\mathbb{R}^{n+1}$. The Schauder estimates (see Theorem 1 of Chapter 4 in \cite{f64}) give that
\[
\left\|u_{(k)}\right\|_{C^{2+\alpha,\frac{2+\alpha}{2}}\left(\mathbb{T}^{n+1}\right)}\le C\left(\left\|h_{(k)}\right\|_{C^{\alpha,\frac{\alpha}{2}}\left(\mathbb{T}^{n+1}\right)} + \left\|u_{(k)}\right\|_{C^0\left(\mathbb{T}^{n+1}\right)} \right).
\]
The Harnack inequality (see Theorem 7.36-7-37 in \cite{l96}) implies that
\[
\sup_{\mathbb{R}^{n+1}}\left(u_{(k)}-\inf_{\mathbb{R}^{n+1}}u_{(k)}\right)\le C\left\|h_{(k)}\right\|_{C^0\left(\mathbb{T}^{n+1}\right)}.
\]
Thus 
\[
\left\|u_{(k)}\right\|_{C^{2+\alpha,\frac{2+\alpha}{2}}\left(\mathbb{T}^{n+1}\right)}\le C\left\|h_{(k)}\right\|_{C^{\alpha,\frac{\alpha}{2}}\left(\mathbb{T}^{n+1}\right)}\le C.
\]
By the Arzela-Ascoli Theorem, there exists $u_{(0)}\in C^{2+\alpha,\frac{2+\alpha}{2}}\left(\mathbb{T}^{n+1}\right)$, such that 
\[
u_{(k)}\rightarrow u_{(0)}\quad \text{in}\quad C^{2,1}\left(\mathbb{T}^{n+1}\right)
\]
as $k\rightarrow\infty$ along a subsequence. Combined with \eqref{approx}, we obtain that
\[
D_tu_{(0)}-\Delta_xu_{(0)}=h\quad \text{in}\quad\mathbb{R}^{n+1}.
\]
\end{proof}

Applying the above Proposition, we attain the openness. 

\begin{lemma}\label{open}
$\mathcal{T}$ is open in $[0,1]$.
\end{lemma}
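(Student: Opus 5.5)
Fix $\sigma_0\in\mathcal{T}$ with solution $u_0\in\mathcal{C}$, $\mathcal{S}_{\sigma_0}(u_0)=0$. We apply the implicit function theorem to the map
\[
\mathcal{S}:\mathcal{C}\times[0,1]\to C^{2+\alpha,\frac{2+\alpha}{2}}\left(\mathbb{T}^{n+1}\right),\qquad (u,\sigma)\mapsto \mathcal{S}_\sigma(u).
\]
First, $\mathcal{C}$ is open in $C^{4+\alpha,\frac{4+\alpha}{2}}\left(\mathbb{T}^{n+1}\right)$, because the strict inequalities $1-u_t>0$ and $D_x^2u+I>0$ hold uniformly on the compact torus and are preserved under small $C^{2,1}$ perturbations. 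Second, $\mathcal{S}_\sigma$ lands in the zero-average space. Indeed, Lemma \ref{periodicity lemma} gives that $(1-u_t)\det(D_x^2u+I)$ has average $1$. The sufficiency direction of Theorem \ref{existence} assumes $f$ has average $1$, so $\sigma f+(1-\sigma)$ also has average $1$. Third, $\mathcal{S}$ is $C^1$ (indeed polynomial in $u_t$ and $D_x^2u$), with
\[
L v:=D_u\mathcal{S}_{\sigma_0}(u_0)v=-\det(D_x^2u_0+I)\,v_t+(1-u_{0,t})\,a^{ij}v_{ij},
\]
where $a^{ij}$ is the cofactor matrix of $D_x^2u_0+I$. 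Dividing by $\det(D_x^2u_0+I)>0$, the equation $Lv=g$ becomes
\[
v_t-b^{ij}v_{ij}=-\frac{g}{\det(D_x^2u_0+I)},
\]
where $b^{ij}=(1-u_{0,t})(D_x^2u_0+I)^{-1}_{ij}$ is uniformly positive definite and lies in $C^{2+\alpha,\frac{2+\alpha}{2}}$.

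The main obstacle is proving that $L:C^{4+\alpha,\frac{4+\alpha}{2}}\left(\mathbb{T}^{n+1}\right)\to C^{2+\alpha,\frac{2+\alpha}{2}}\left(\mathbb{T}^{n+1}\right)$ is an isomorphism. Proposition \ref{periodic heat equation} only treats the constant-coefficient operator $\partial_t-\Delta_x$, so we use the method of continuity in $s\in[0,1]$ on the family
\[
L_s v:=v_t-\big((1-s)\delta^{ij}+s\,b^{ij}\big)v_{ij}.
\]

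Range: the range of $L_s$ is not automatically the set of zero-average functions. To fix this, we project onto zero average by working with the operator
\[
\tilde L v:=Lv-\int_{[0,1]^{n+1}}Lv\,dx\,dt,
\]
or equivalently we augment the unknown by a constant $c$ and solve $Lv+c\,\phi=g$. Here $\phi=\det(D_x^2u_0+I)$ is a positive weight, chosen so that the equation divided by $\phi$ has the form "heat-type operator equals data plus a constant." A cleaner alternative is to solve $v_t-b^{ij}v_{ij}=h+c$ for the pair $(v,c)$, with $v$ normalized to have zero average. Either way, the key a priori estimate for $L_s$ along the family is obtained as follows.
\begin{itemize}
\item Periodicity, the maximum principle and the Harnack inequality give the oscillation bound $\operatorname{osc} v\le C\|h\|_{C^0}$. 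Since $v$ has zero average, this yields $\|v\|_{C^0}\le C\|h\|_{C^0}$.
\item Schauder estimates, applied twice to difference quotients, then give $\|v\|_{C^{4+\alpha,\frac{4+\alpha}{2}}}\le C\|h\|_{C^{2+\alpha,\frac{2+\alpha}{2}}}$.
\end{itemize}

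Solvability at $s=0$ is exactly Proposition \ref{periodic heat equation}, upgraded to $C^{4+\alpha}$ regularity by differentiating the equation. The standard continuity argument then propagates invertibility to $s=1$. Injectivity of $L$ on zero-average functions also follows from the strong maximum principle: a periodic solution of $Lv=0$ is constant, hence $0$.

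The implicit function theorem then yields a $C^1$ curve $\sigma\mapsto u_\sigma\in\mathcal{C}$ with $\mathcal{S}_\sigma(u_\sigma)=0$ for $|\sigma-\sigma_0|$ small. Therefore $\mathcal{T}$ is open.
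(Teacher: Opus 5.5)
Your proof is correct, and its skeleton matches the paper's:
\begin{itemize}
\item an implicit-function argument for $\mathcal{S}$;
\item the linearization inverted by the continuity method, starting from $D_t-\Delta_x$ and Proposition \ref{periodic heat equation};
\item a priori bounds from the maximum principle and Harnack inequality, plus Schauder estimates;
\item injectivity from the strong maximum principle.
\end{itemize}

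The one genuine difference is the homotopy. The paper does not divide by $\det(D_x^2u_0+I)$; it deforms the linearization itself. By Lemma \ref{periodicity lemma}, $\int_{[0,1]^{n+1}}\mathcal{S}_{\sigma_0}(u_0+sv)\,dx\,dt\equiv0$ in $s$, and $\int_{[0,1]^{n+1}}\mathcal{S}'_{\sigma_0}(u_0)[v]\,dx\,dt$ is its $s$-derivative at $s=0$. So every operator in the paper's family maps zero-mean functions to zero-mean functions, and the range problem you fix with the constant $c$ never arises.

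However, $\mathcal{S}'_{\sigma_0}(u_0)=-\det(D_x^2u_0+I)\,(\partial_t-b^{ij}\partial_{ij})$. The paper's combination $(1-\sigma)\mathcal{S}'+\sigma(D_t-\Delta_x)$, as written, therefore fails to be parabolic for some intermediate $\sigma$: the $v_t$ coefficient $\sigma-(1-\sigma)\det(D_x^2u_0+I)$ vanishes somewhere. One must use $-\mathcal{S}'$ there. Your normalization gets the orientation right automatically, but it destroys the zero-mean structure. The cleanest route combines both: deform the undivided operator $-\mathcal{S}'_{\sigma_0}(u_0)$ to $D_t-\Delta_x$. Every intermediate operator is then uniformly parabolic and has zero-mean range, so no auxiliary constant is needed.

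If you keep your version, make two steps explicit.
\begin{itemize}
\item The bound $|c|\le\|h\|_{C^0}$. Read it off from $v_t-b_s^{ij}v_{ij}=h+c$ at the maximum and minimum points of $v$ on the torus. You need it before the oscillation and Schauder bounds, whose data is $h+c$ rather than $h$.
\item At $s=1$, the identity $c\int_{[0,1]^{n+1}}\phi\,dx\,dt=\int_{[0,1]^{n+1}}(g-Lv)\,dx\,dt=0$. Since $\phi>0$, this forces $c=0$. This is precisely where the zero-mean property of $L$, inherited from Lemma \ref{periodicity lemma}, is used.
\end{itemize}
Also, $\tilde Lv:=Lv-\int Lv$ as written is vacuous, because $L$ already has zero-mean range; you mean the projection applied to $L_s$.
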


\begin{proof}
The G$\hat{\text{a}}$teaux derivative of $\mathcal{S}_\sigma$ at $u$ in $v$ direction is that
\[
\begin{aligned}
\mathcal{S}_\sigma'(u)[v]&=\lim\limits_{\epsilon\rightarrow0}\frac{\mathcal{S}_\sigma(u+\epsilon v)-\mathcal{S}_\sigma(u)}{\epsilon}\\
&=\det\left(D_x^2u+I\right)\left(-v_t+\left(1-u_t\right)\left(I+D_x^2u\right)^{ij}D_{ij}v\right),
\end{aligned}
\]
where $v\in C^{4+\alpha,\frac{4+\alpha}{2}}\left(\mathbb{T}^{n+1}\right)$. 

Fix $\hat{\sigma}\in\mathcal{T}$. Then there exists $u_{\hat{\sigma}}\in C^{4+\alpha,\frac{4+\alpha}{2}}\left(\mathbb{T}^{n+1}\right)$ such that $\mathcal{S}_{\hat{\sigma}}\left(u_{\hat{\sigma}}\right)=0$ in $\mathbb{R}^{n+1}$. We claim that $\mathcal{S}_{\hat{\sigma}}'\left(u_{\hat{\sigma}}\right)$ is onto from $C^{4+\alpha,\frac{4+\alpha}{2}}\left(\mathbb{T}^{n+1}\right)$ to $C^{2+\alpha,\frac{2+\alpha}{2}}\left(\mathbb{T}^{n+1}\right)$. 
For this, we need to solve the linear periodic problem 
\[
\mathcal{S}_{\hat{\sigma}}'\left(u_{\hat{\sigma}}\right)[v]=h\quad \text{in} \quad\mathbb{R}^{n+1}
\]
for $h\in C^{2+\alpha,\frac{2+\alpha}{2}}\left(\mathbb{T}^{n+1}\right)$. Define the operator
\[
L_\sigma v:=\left(\left(1-\sigma\right)\mathcal{S}_{\hat{\sigma}}'\left(u_{\hat{\sigma}}\right)+\sigma\left(D_t-\Delta_x\right)\right)v.
\]
$L_\sigma$ is an operator mapping $C^{4+\alpha,\frac{4+\alpha}{2}}\left(\mathbb{T}^{n+1}\right)$ into $C^{2+\alpha,\frac{2+\alpha}{2}}\left(\mathbb{T}^{n+1}\right)$ since
\[
\int_{[0,1]^{n+1}}\left(D_t-\Delta_x\right)vdxdt=0
\] 
and
\[
\begin{aligned}
\int_{[0,1]^{n+1}}\mathcal{S}_{\hat{\sigma}}'\left(u_{\hat{\sigma}}\right)[v]dxdt&=\int_{[0,1]^{n+1}}\frac{d}{ds}\Bigg|_{s=0}\left(\mathcal{S}_{\hat{\sigma}}\left(u_{\hat{\sigma}}+sv\right)\right)dxdt\\
&=\frac{d}{ds}\left(\int_{[0,1]^{n+1}}\mathcal{S}_{\hat{\sigma}}\left(u_{\hat{\sigma}}+sv\right)dxdt\right)\Bigg|_{s=0}=0.
\end{aligned}
\]
The Schauder estimates (see Theorem 1 of Chapter 4 in \cite{f64}) and the Harnack inequality (see Theorem 7.36-7-37 in \cite{l96}) give that
\begin{equation}\label{star}
\left\|v\right\|_{C^{4+\alpha,\frac{4+\alpha}{2}}\left(\mathbb{T}^{n+1}\right)}\le C\left\|L_\sigma v\right\|_{C^{2+\alpha,\frac{2+\alpha}{2}}\left(\mathbb{T}^{n+1}\right)},
\end{equation}
where the constant $C$ is independent of $\sigma\in[0,1]$. Combining Proposition \ref{periodic heat equation} and Theorem 5.2 in \cite{gt01}, we derive that $\mathcal{S}_{\hat{\sigma}}'\left(u_{\hat{\sigma}}\right)$ is onto from $C^{4+\alpha,\frac{4+\alpha}{2}}\left(\mathbb{T}^{n+1}\right)$ to $C^{2+\alpha,\frac{2+\alpha}{2}}\left(\mathbb{T}^{n+1}\right)$. 

The estimate \eqref{star} also implies that $\mathcal{S}_{\hat{\sigma}}'\left(u_{\hat{\sigma}}\right)$ is one-to-one from $C^{4+\alpha,\frac{4+\alpha}{2}}\left(\mathbb{T}^{n+1}\right)$ to $C^{2+\alpha,\frac{2+\alpha}{2}}\left(\mathbb{T}^{n+1}\right)$. Thus
\[
\mathcal{S}_{\hat{\sigma}}'\left(u_{\hat{\sigma}}\right):C^{4+\alpha,\frac{4+\alpha}{2}}\left(\mathbb{T}^{n+1}\right) \rightarrow C^{2+\alpha,\frac{2+\alpha}{2}}\left(\mathbb{T}^{n+1}\right)
\]
is a linear isomorphism. From Theorem 15.1 in \cite{d85}, there exists a neighborhood $\mathcal{N}$ of $\hat{\sigma}$ in $[0,1]$, such that for all $\sigma\in\mathcal{N}$, there is $u_\sigma\in C^{4+\alpha,\frac{4+\alpha}{2}}\left(\mathbb{T}^{n+1}\right)$, such that $\mathcal{S}_\sigma\left(u_\sigma\right)=0$ in $\mathbb{R}^{n+1}$. Additionally, from the proof of Theorem 15.1 in \cite{d85} there is
\[
\left\|u_\sigma-u_{\hat{\sigma}}\right\|_{C^{4+\alpha,\frac{4+\alpha}{2}}\left(\mathbb{T}^{n+1}\right)}\le C\left\|\left(\mathcal{S}_{\hat{\sigma}}'\left(u_{\hat{\sigma}}\right)\right)^{-1}\left(f-1\right)\right\|_{C^{4+\alpha,\frac{4+\alpha}{2}}\left(\mathbb{T}^{n+1}\right)}\cdot \left|\sigma-\hat{\sigma}\right|,
\]
where $C$ is a universal constant. Choosing $\mathcal{N}$ small enough such that $\left|\sigma-\hat{\sigma}\right|<<1$, we get that for every $\sigma\in\mathcal{N}$
\[
1-D_tu_\sigma>0,\quad I+D_x^2u_\sigma>0\quad \text{in}\quad\mathbb{R}^{n+1}.
\]
Therefore $\mathcal{N}\subset\mathcal{T}$ and then $\mathcal{T}$ is open.
\end{proof}

Now we obtain the $C^{2,1}$ estimates for the $C^{4,2}$ solution of the equation.

\begin{prop}\label{c21estimates}
Let $u\in C^{4,2}\left(\mathbb{T}^{n+1}\right)$ be the solution to
\[
\left\{\begin{array}{ll}
\left(1-u_t\right)\det\left(I+D_x^2u\right)=f & \text{in }\mathbb{R}^{n+1}, \\
1-u_t>0,\quad I+D_x^2u>0 & \text{in }\mathbb{R}^{n+1},
\end{array}\right.
\]
where $f\in C^{2,1}\left(\mathbb{T}^{n+1}\right)$ is a positive function. Then there exists a constant $C$ depending only on $n,\sup\limits_{\mathbb{R}^{n+1}}f,\inf\limits_{\mathbb{R}^{n+1}}f,\left\|D_xf\right\|_{C^0\left(\mathbb{T}^{n+1}\right)}\left\|D_x^2f\right\|_{C^0\left(\mathbb{T}^{n+1}\right)}$, such that
\[
\left\|u\right\|_{C^{2,1}\left(\mathbb{T}^{n+1}\right)}\le C.
\]
\end{prop}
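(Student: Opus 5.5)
We need $C^{2,1}$ bounds for a periodic solution normalized by zero average. We bound, in order: $u_t$ and $D_x^2u$ (equivalently $w=u+\frac12|x|^2-t$); then $D_xu$; then $u$ itself. Periodicity replaces all boundary considerations: every extremum is attained at an interior point of $\mathbb{R}^{n+1}$, so the maximum principle applies directly.

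\textbf{Step 1: a two-sided bound for $u_t$.} Write $F:=\log(1-u_t)+\log\det(I+D_x^2u)=\log f$. Differentiating in $t$ gives the linearized equation
\[
-\frac{u_{tt}}{1-u_t}+a^{ij}u_{ijt}=\frac{f_t}{f},\qquad a^{ij}:=(I+D_x^2u)^{ij}.
\]
Since $f$ is only $C^{2,1}$, $f_t$ is bounded, but we must avoid losing derivatives on $f$. So I would argue directly at the extremum. At a maximum point of $u_t$ (which exists by periodicity) we have $D_xu_t=0$ and $D_x^2u_t\le0$, while $u_{tt}=0$ by interior criticality in $t$. That alone does not bound $u_t$. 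Instead, consider the extremum of $u$ itself. At a maximum point of $u$, $u_t=0$ and $D_x^2u\le0$, so $\det(I+D_x^2u)\le1$. Hence $1-u_t=1$ and $f\le1$ there; this gives no direct bound on $u_t$ elsewhere.

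The better route is to use $w$. The quantity $-w_t=1-u_t$ should be controlled by comparing with the $x$-Hessian through the equation. I would bound $1-u_t$ from above via the auxiliary function $\log(1-u_t)+\epsilon\frac12|D_xu|^2$, or simply via $\log(1-u_t)$. At its interior maximum, $u_{tt}=0$, $D_xu_t=0$, and $a^{ij}u_{ijt}\le0$ (since $a^{ij}$ is positive definite and $D_x^2u_t\le0$). The $t$-differentiated equation then gives $0\ge a^{ij}u_{ijt}=f_t/f$. This only provides information about $f_t$, not a bound.

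So I would switch to a mixed quantity. The key is $\Delta_x$ applied to the equation, or the standard Pogorelov-type function $\log(1-u_t)$ combined with $\log\lambda_{\max}(I+D_x^2u)$. Concavity of $\log\det$ and of $\log(1-s)$ in the relevant variables gives, for the maximum of $h:=\log\lambda_{\max}(I+D_x^2u)$ or of $\Delta_xu$,
\[
-\frac{(\Delta u)_t}{1-u_t}+a^{ij}(\Delta u)_{ij}\ge \Delta_x\log f-\text{(good concave terms)}.
\]
At a maximum of $\Delta_xu$, the left side is at most $0$. This still needs an additional term to close the estimate.

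\textbf{Step 2: Hessian and $u_t$ bounds.} Because there is no boundary and $u$ is periodic, the cleanest approach is to use the averages from Lemma~\ref{periodicity lemma} together with the arithmetic–geometric mean inequality. I would take $G:=\Delta_xu-u_t$ (that is, $\mathrm{tr}(I+D_x^2u)+(1-u_t)$ up to constants) and apply the linearized operator
\[
L:=-\frac{\partial_t}{1-u_t}+a^{ij}\partial_{ij}.
\]
Using concavity of $\log$ and $\log\det$ (both sums $\Delta_x\log f$ and $\partial_t\log f$ appear), we get $L(\Delta_xu)\ge\Delta_x\log f$. The $t$-derivative of $\log f$ involves only $f_t$, which is bounded for $f\in C^{2,1}$. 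At a maximum point of $\Delta_xu$, $L(\Delta_xu)\le0$; this yields no contradiction unless a positive term is added. So I would add $\frac{\epsilon}{2}|x|^2$-type barriers, but these are not periodic.

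Instead I would use the ratio trick: at a maximum of $\Delta_xu$, the third-derivative terms from concavity, $a^{ik}a^{jl}u_{ij\gamma}u_{kl\gamma}$, are dropped. Then I would use the equation's ellipticity structure: at the maximum point $D_x(\Delta u)=0$, and $-\partial_t\Delta u/(1-u_t)$ has no sign. This is the main obstacle: parabolic periodic problems admit no compactly supported cutoff, so the Hessian bound must come from a global maximum with the $t$-derivative term killed by interior criticality ($\partial_t\Delta u=0$ at a space-time maximum). With that, $a^{ij}(\Delta u)_{ij}\le0$ at the point, and the concavity inequality gives $0\ge\Delta_x\log f+(\text{nonnegative})$. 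This needs a lower bound on $\Delta_x\log f$, which again only gives an inequality, not a bound.

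To close, I would use $\mathrm{tr}\,a\ge n\,(\det a)^{1/n}$ via a weighted function $\Delta_xu+\frac{A}{2}|D_xu|^2$, having first obtained a gradient bound. Periodicity of $u$ gives $D_xu$ zero mean in each $x$-line, and convexity of $u+\frac12|x|^2$ in $x$ then gives $|D_xu|\le C$ (indeed $|D_xu|\le\frac12$-type bounds). With $L(\frac12|D_xu|^2)\ge a^{ij}u_{ki}u_{kj}-C\ge\mathrm{tr}(I+D_x^2u)-C'$, the maximum of $\Delta u+A\frac12|D_xu|^2$ yields $\Delta_xu\le C$. Once the Hessian is bounded, the equation gives $1-u_t=f/\det(I+D_x^2u)\ge c>0$. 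Conversely, $\det(I+D_x^2u)\le C$ gives $1-u_t\ge \inf f/C$, and the upper bound on $1-u_t$ follows from the analogous maximum argument for $-u_t$, using the $t$-differentiated equation plus the barrier $A|D_xu|^2$.

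\textbf{Step 3: $C^0$ bound.} With zero average and bounded $D_xu$ and $u_t$, the $C^0$ bound on $u$ follows at once.
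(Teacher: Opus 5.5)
Once you strip away the false starts (Step 1 and the first half of Step 2, including the unused appeal to the averaging lemma and AM--GM, should be deleted), your working argument has four steps:
\begin{itemize}
\item $|D_xu|\le\frac12$ componentwise, from periodicity and $I+D_x^2u>0$;
\item a maximum-principle bound on $\Delta_xu+\frac A2|D_xu|^2$, giving $B:=I+D_x^2u\le CI$ and hence $1-u_t=f/\det B\ge c$;
\item a lower bound on $u_t$;
\item the $C^0$ bound.
\end{itemize}
The first two steps are correct.

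The gap is in the third step. You claim the upper bound on $1-u_t$ follows from ``the analogous maximum argument'' for $-u_t+\frac A2|D_xu|^2$. But the only barrier inequality you record is $L(\frac12|D_xu|^2)\ge\mathrm{tr}\,B-C'$. Fed into that argument, it yields at the maximum point only $\mathrm{tr}\,B\le C$. You already have this, and it bounds $\det B$ from \emph{above}, hence $1-u_t$ from below --- the wrong direction. An upper bound on $1-u_t$ needs a lower bound on the smallest eigenvalue of $B$, and nothing you wrote produces one. The inequality $\mathrm{tr}\,a\ge n(\det a)^{1/n}$ you mention is never supplied with an upper bound on $\mathrm{tr}\,a$.

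The repair is to keep the term you discarded. Since $D_x^2u=B-I$ and $a=B^{-1}$,
\[
a^{ij}u_{ki}u_{kj}=\mathrm{tr}\bigl((B-I)B^{-1}(B-I)\bigr)=\mathrm{tr}\,B+\mathrm{tr}\,B^{-1}-2n .
\]
Combine this with $L(-u_t)=-(\log f)_t$. At the maximum of $-u_t+\frac12|D_xu|^2$ you then get $\mathrm{tr}\,B^{-1}\le C$, hence $\det B\ge(n/C)^n$ and $1-u_t\le C$ there. The bound then holds everywhere because $|D_xu|\le\frac12\sqrt n$.

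With this fix your proof is a valid alternative to the paper's. The paper first gets $\|u\|_{C^0}$ and $|D_xu|$. It then bounds $u_t$ and $D_x^2u$ simultaneously through the single function $(\lambda_{\max}(B)+1-u_t)e^{\phi(u)}$, with $\phi$ decreasing and $\phi''\ge\phi'^2$. There the lower-eigenvalue control comes from $\phi'(u)Lu$, because $Lu=n+1-(1-u_t)^{-1}-\mathrm{tr}\,B^{-1}$; this is exactly the ingredient missing from your $u_t$ step.

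Your route needs no $C^0$ bound before the second-order estimate, and your bound on $D_x^2u$ does not use $f_t$. Both your $u_t$ step and the paper's proof do use $\|f_t\|_{C^0}$, even though the statement does not list it.
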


\begin{proof}
\textbf{Step 1: Estimate of $u$.} Take any $0\le t_2\le t_1\le 2$ and $x\in\mathbb{R}^n$. Combined with $1-u_t>0$, we get that
\[
u\left(x,t_1\right)-u\left(x,t_2\right)=\int_{t_2}^{t_1}u_t(x,s)ds\le t_1-t_2\le2.
\]
Considering the periodicity of $u$, there is for every $x\in\mathbb{R}^n$
\[
\mathop{\text{osc}}\limits_{t\in\mathbb{R}^1}u(x,t)\le2.
\]
$D_x^2u+I>0$ gives that $u_{ii}>-1$ for $1\le i\le n$. Take any $0\le y_1\le z_1\le2$ and $\left(\tilde{x_1},t\right)\in\mathbb{R}^n$, then
\[
u\left(z_1,\tilde{x_1},t\right)-u\left(y_1,\tilde{x_1},t\right)=\int_{y_1}^{z_1}u_1\left(s,x_2,\cdots,x_n,t\right)ds.
\]
By the periodicity of $u$, there exists $\overline{x_1}\in[-1,0]$, such that 
\[
u_1\left(\overline{x_1},\tilde{x_1},t\right)=0.
\]
Thus
\[
\begin{aligned}
u\left(z_1,\tilde{x_1},t\right)-u\left(y_1,\tilde{x_1},t\right)&=\int_{y_1}^{z_1}\int_{\overline{x_1}}^su_{11}\left(\alpha,\tilde{x_1},t\right)d\alpha ds\\
&\ge \int_{y_1}^{z_1}\int_{-1}^s (-1)d\alpha ds\\
&=\int_{y_1}^{z_1}(-s-1)ds\ge-4,
\end{aligned}
\]
implying that for any $\left(\tilde{x_1},t\right)\in\mathbb{R}^n$
\[
\mathop{\text{osc}}\limits_{x_1\in\mathbb{R}^1}u\left(\cdot,\tilde{x_1},t\right)\le4.
\]
Similarly, there is for any $\left(\tilde{x_i},t\right)\in\mathbb{R}^n$
\[
\mathop{\text{osc}}\limits_{x_k\in\mathbb{R}^1}u\left(x_1,\cdots,x_k,\cdots,x_n,t\right)\le4.
\]
Thus we obtain that
\[
\mathop{\text{osc}}\limits_{(x,t)\in\mathbb{R}^{n+1}}u\left(x,t\right)\le4n+2.
\]
It follows from $\int_{[0,1]^{n+1}}u(x,t)dxdt=0$ that there exists $\left(x_0,t_0\right)\in[0,1]^{n+1}$ such that $u\left(x_0,t_0\right)=0$. In conclusion 
\[
\left\|u\right\|_{C^0\left(\mathbb{R}^{n+1}\right)}\le 4n+2.
\]

\textbf{Step 2: Estimate of $D_xu$.} For any $0\le y_1\le z_1\le 2$ and $\left(\tilde{x_1},t\right)\in\mathbb{R}^n$
\[
u_1\left(z_1,\tilde{x_1},t\right)-u_1\left(y_1,\tilde{x_1},t\right)=\int_{y_1}^{z_1}u_{11}\left(s,\tilde{x_1},t\right)ds\ge \int_{y_1}^{z_1}(-1)ds\ge-2.
\]
The periodicity of $u$ implies that there exists $\overline{x_1}\in[0,1]$ such that \[
u_1\left(\overline{x_1},\tilde{x_1},t\right)=0.
\]
Thus 
\[
\left\|u_1\left(\cdot,\tilde{x_1},t\right)\right\|_{C^0\left(\mathbb{R}^1\right)}\le 2.
\]
The arbitrariness of $\left(\tilde{x_1},t\right)$ gives that $\left\|u_1\right\|_{C^0\left(\mathbb{R}^{n+1}\right)}\le 2$. Similarly 
\[
\left\|u_i\right\|_{C^0\left(\mathbb{R}^{n+1}\right)}\le 2
\]
for $2\le i\le n$. Therefore
\[
\left\|D_xu\right\|_{C^0\left(\mathbb{R}^{n+1}\right)}\le 2\sqrt{n}.
\]

\textbf{Step 3: Estimates of $u_t,D_x^2u$.} Define $M_0:=8n+4$ and
\[
\phi(s):=\frac{\left(M_0-s\right)^2}{8M_0^2}.
\]
A direct calculation gives that for $s\in\left[-\frac{M_0}{2},\frac{M_0}{2}\right]$
\[
\phi'(s)=\frac{s-M_0}{4M_0^2}\in\left[-\frac{3}{8M_0},-\frac{1}{8M_0}\right]
\]
and 
\begin{equation}\label{phi'''}
\phi''(s)-\phi'(s)^2\ge0.
\end{equation}
By the periodicity and the regularity of $u$, we assume
\[
\left(\alpha'\left(D_x^2u+I\right)\alpha+1-u_t\right)e^{\phi(u)}
\]
attains its maximum at $\left(x_0,t_0\right)\in[0,1]^{n+1}$ and $\alpha\in \mathbb{S}^{n-1}$. Without loss of generality, we assume that $\alpha=(1,0,\cdots,0)$, $D_x^2u$ is diagnal at $\left(x_0,t_0\right)$ and $u_{11}$ is the maximum diagnal element. Denote $w:=\left(2+u_{11}-u_t\right)e^{\phi(u)}$. Then at $\left(x_0,t_0\right)$, there are
\begin{equation}\label{fermat}
\left(\log w\right)_i=\frac{u_{11i}-u_{ti}}{2+u_{11}-u_t}+\phi'(u)u_i=0,
\end{equation}
\[
\left(\log w\right)_t=\frac{u_{11t}-u_{tt}}{2+u_{11}-u_t}+\phi'(u)u_t=0,
\]
\[
\left(\log w\right)_{ii}=\frac{u_{11ii}-u_{tii}}{2+u_{11}-u_t}-\left(\frac{u_{11i}-u_{ti}}{2+u_{11}-u_t}\right)^2+\phi''(u)u_i^2+\phi'(u)u_{ii}\le0.
\]
Define the linearized operator $L:=\frac{1}{u_t-1}D_t+\left(I+D_x^2u\right)^{ij}D_{ij}$. $L$ is a parabolic operator because $1-u_t>0$ and $D_x^2u+I>0$. Then at $\left(x_0,t_0\right)$
\begin{equation}\label{compute}
\begin{aligned}
0&\ge L\left(\log w\right)\\
&=\frac{1}{u_t-1}\left(\frac{u_{11t}-u_{tt}}{2+u_{11}-u_t}+\phi'(u)u_t\right)\\
&\quad+\sum\limits_{i=1}^n\left(1+u_{ii}\right)^{-1}\left(\frac{u_{11ii}-u_{tii}}{2+u_{11}-u_t}-\left(\frac{u_{11i}-u_{ti}}{2+u_{11}-u_t}\right)^2+\phi''(u)u_i^2+\phi'(u)u_{ii}\right)\\
&=\frac{L\left(u_{11}-u_t\right)}{2u_t+u_{11}-u_t}+\phi'(u)Lu+\left(\phi''(u)-\phi'(u)^2\right)\sum\limits_{i=1}^n\left(1+u_{ii}\right)^{-1}u_i^2\\
&\ge \frac{L\left(u_{11}-u_t\right)}{2+u_{11}-u_t}+\phi'(u)Lu,
\end{aligned}
\end{equation}
where the first and second equalities come from \eqref{fermat} and \eqref{phi'''}. 

From the equation
\[
\log\left(1-u_t\right)+\log\det\left(I+D_x^2u\right)=\log f
\]
and the concavity of the operator $\log\det M$ for symmetric positive definite matrices 
\begin{equation}\label{u11}
L\left(u_{11}\right)\ge\left(\log f\right)_{11},\quad L\left(u_t\right)\ge\left(\log f\right)_t.
\end{equation}
We observe that at $\left(x_0,t_0\right)$
\begin{equation}\label{uequation}
Lu=\frac{u_t}{u_t-1}+\sum\limits_{i=1}^n\frac{u_{ii}}{1+u_{ii}}=n+1-\left(\left(1-u_t\right)^{-1}+\sum\limits\left(1+u_{ii}\right)^{-1}\right)
\end{equation}

It follows from \eqref{compute},\eqref{u11} and \eqref{uequation} that at $\left(x_0,t_0\right)$
\[
\left(1-u_t\right)^{-1}+\sum\limits_{i=1}^n\left(1+u_{ii}\right)^{-1}\le C\left(1+\frac{1}{2+u_{11}-u_t}\right).
\]
Because $1+u_{11}\ge 1+u_{ii}$ at $\left(x_0,t_0\right)$ for $2\le i\le n$
\[
\left(1-u_t\right)^{-1}+\sum\limits_{i=1}^n\left(1+u_{ii}\right)^{-1}\le C\left(1+\frac{1}{1-u_t+\sum\limits_{i=1}^n\left(1+u_{ii}\right)}\right)
\]
If $1-u_t+\sum\limits_{i=1}^n\left(1+u_{ii}\right)\le 1$, we get the desired upper bound. If not, there is
\[
\left(1-u_t\right)^{-1}+\sum\limits_{i=1}^n\left(1+u_{ii}\right)^{-1}\le C.
\]
While
\[
0<\inf\limits_{\mathbb{R}^{n+1}}f\le \left(1-u_t\right)\prod\limits_{i=1}^n\left(1+u_{ii}\right)\le \sup\limits_{\mathbb{R}^{n+1}}f<\infty,
\]
it follows that
\[
1-u_t+\sum\limits_{i=1}^n\left(1+u_{ii}\right)\le C.
\]
Thus we obtain that
\[
\left\|\left(\alpha'\left(D_x^2u+I\right)\alpha+1-u_t\right)e^{\phi(u)}\right\|_{C^0\left(\mathbb{R}^{n+1}\times \mathbb{S}^{n-1}\right)}\le C.
\]
Combined with $1-u_t>0$ and $D_x^2u+I>0$, we obtain that
\[
\left\|u_t\right\|_{C^0\left(\mathbb{T}^{n+1}\right)}+\left\|D_x^2u\right\|_{C^0\left(\mathbb{T}^{n+1}\right)}\le C.
\]
\end{proof}

\begin{remark}\label{upperlower}
Combining the estimates of $\left\|u_t\right\|_{C^0\left(\mathbb{T}^{n+1}\right)},\left\|D_x^2u\right\|_{C^0\left(\mathbb{T}^{n+1}\right)}$ and the equation, there exists a constant $C>1$ depending only on the constants stated in Proposition \ref{c21estimates}, such that
\[
C^{-1}\le 1-u_t\le C,\quad C^{-1}I\le D_x^2u+I\le CI,\quad \text{in}\quad \mathbb{R}^{n+1}.
\]
\end{remark}

The closedness follows from the $C^{2,1}$ estimates.

\begin{lemma}\label{closed}
$\mathcal{T}$ is closed in $[0,1]$.
\end{lemma}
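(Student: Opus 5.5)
To prove that $\mathcal{T}$ is closed, take a sequence $\sigma_k\in\mathcal{T}$ with $\sigma_k\to\sigma_0\in[0,1]$, and let $u_k\in\mathcal{C}$ solve $\mathcal{S}_{\sigma_k}(u_k)=0$. The right-hand sides are $f_k:=\sigma_kf+(1-\sigma_k)$. Since $f$ is positive and periodic, these satisfy, uniformly in $k$,
\[
\min\{1,\inf f\}\le f_k\le\max\{1,\sup f\},
\]
and their $C^{2+\alpha,\frac{2+\alpha}{2}}$ norms are bounded by $1+\|f\|_{C^{2+\alpha,\frac{2+\alpha}{2}}}$. Each $u_k$ is $C^{4+\alpha,\frac{4+\alpha}{2}}$ and periodic, so Proposition \ref{c21estimates} applies and gives $\|u_k\|_{C^{2,1}(\mathbb{T}^{n+1})}\le C$ with $C$ independent of $k$. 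Remark \ref{upperlower} then gives the uniform ellipticity bounds
\[
C^{-1}\le 1-D_tu_k\le C,\qquad C^{-1}I\le D_x^2u_k+I\le CI .
\]

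Next we upgrade this to uniform higher regularity. Rewrite the equation as $\log(1-D_tu_k)+\log\det(I+D_x^2u_k)=\log f_k$. This operator is concave in $(D_tu,D_x^2u)$ and uniformly parabolic on the range above. The parabolic Evans--Krylov theorem (Krylov's interior $C^{2+\beta,1+\beta/2}$ estimate for concave fully nonlinear parabolic equations) then yields, on unit cylinders and hence by periodicity globally, $\|u_k\|_{C^{2+\beta,\frac{2+\beta}{2}}(\mathbb{T}^{n+1})}\le C$ for some $\beta\in(0,1)$.

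With this, differentiate the equation in $x_i$ and in $t$. Each of $w=D_{x_i}u_k$ and $w=D_tu_k$ satisfies a linear equation
\[
\frac{1}{D_tu_k-1}\,w_t+\left(I+D_x^2u_k\right)^{ij}w_{ij}=(\log f_k)_{\cdot},
\]
whose coefficients are $C^{\beta,\beta/2}$ with uniform bounds. Linear Schauder estimates (Theorem 1 of Chapter 4 in \cite{f64}), applied on cylinders and made global by periodicity, give uniform $C^{2+\beta}$ bounds on $D_xu_k$ and $D_tu_k$. Bootstrapping once more, using $f\in C^{2+\alpha,\frac{2+\alpha}{2}}$, gives $\|u_k\|_{C^{4+\alpha,\frac{4+\alpha}{2}}(\mathbb{T}^{n+1})}\le C$. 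This is done by differentiating twice in $x$ (or once in $t$), where the second-derivative terms have Hölder coefficients.

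By Arzelà--Ascoli, a subsequence converges in $C^{4,2}$ to some $u_0$. The limit $u_0$ lies in $C^{4+\alpha,\frac{4+\alpha}{2}}$ and is periodic with zero average, since both properties are preserved under uniform convergence. Passing to the limit in the equation gives $\mathcal{S}_{\sigma_0}(u_0)=0$. The strict inequalities $1-D_tu_0>0$ and $I+D_x^2u_0>0$ also survive, because the uniform bounds of Remark \ref{upperlower} pass to the limit: $1-D_tu_0\ge C^{-1}$ and $I+D_x^2u_0\ge C^{-1}I$. Hence $u_0\in\mathcal{C}$ and $\sigma_0\in\mathcal{T}$.

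The main obstacle is the step from $C^{2,1}$ to $C^{2+\beta,1+\beta/2}$. I would invoke the parabolic Evans--Krylov theory directly, noting that concavity of $\log(1-u_t)+\log\det(I+D_x^2u)$ in $(u_t,D^2_xu)$ holds on the relevant convex set. Once that estimate is available, the remaining steps are standard linear Schauder bootstrapping.
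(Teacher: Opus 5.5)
Your proposal is correct and follows the paper's proof essentially step for step. You use the uniform $C^{2,1}$ bound from Proposition \ref{c21estimates}, the parabolic Evans--Krylov estimate, Schauder bootstrapping to a uniform $C^{4+\alpha,\frac{4+\alpha}{2}}$ bound, Arzel\`a--Ascoli, and Remark \ref{upperlower} to preserve $1-D_tu_0>0$ and $I+D_x^2u_0>0$ in the limit. You additionally make explicit the uniformity in $\sigma$ of the data $\sigma_kf+(1-\sigma_k)$ and the details of the bootstrap, both of which the paper leaves implicit.
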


\begin{proof}
Assume there is a sequence $\left\{\sigma_i\right\}_{i=1}^\infty\subset\mathcal{T}$ satifying $\sigma_i\rightarrow\sigma_0$ as $i\rightarrow\infty$. Then there exists a sequence $\left\{u_{(i)}\right\}_{i=1}^\infty\subset C^{4+\alpha,\frac{4+\alpha}{2}}\left(\mathbb{T}^{n+1}\right)$ satisfying
\[
\left(1-D_tu_{(i)}\right)\det\left(I+D_x^2u_{(i)}\right)=\sigma_if+\left(1-\sigma_i\right)\quad \text{in}\quad \mathbb{R}^{n+1}.
\]  
Applying Proposition \ref{c21estimates}, there is a constant $C>0$ independent of $\sigma_i$ such that
\[
\left\|u_{(i)}\right\|_{C^{2,1}\left(\mathbb{T}^{n+1}\right)}\le C.
\]
It follows from the Evans-Krylov estimates (see Theorem 4.13 in \cite{w92}) that 
\begin{equation}\label{ekestimate}
\left\|u_{(i)}\right\|_{C^{2+\alpha,\frac{2+\alpha}{2}}\left(\mathbb{T}^{n+1}\right)}\le C.
\end{equation}
The Schauder estimates (see Theorem 1 of Chapter 4 in \cite{f64}) give that
\[
\left\|u_{(i)}\right\|_{C^{4+\alpha,\frac{4+\alpha}{2}}\left(\mathbb{T}^{n+1}\right)}\le C.
\]
By the Arzela-Ascoli Theorem, there exists $u_{(0)}\in C^{4+\alpha,\frac{4+\alpha}{2}}\left(\mathbb{T}^{n+1}\right)$, such that $u_{(i)}\rightarrow u_{(0)}$ in $C^{4,2}\left(\mathbb{T}^{n+1}\right)$ as $i\rightarrow\infty$ along a subsequence. Then
\[
\left(1-D_tu_{(0)}\right)\det\left(I+D_x^2u_{(0)}\right)=\sigma_0f+\left(1-\sigma_0\right)\quad \text{in}\quad \mathbb{R}^{n+1}.
\]  
It follows from Remark \ref{upperlower} that 
\[
1-D_tu_{(0)}>0,\quad D_x^2u_{(0)}+I>0,\quad\text{in}\quad \mathbb{R}^{n+1}.
\]
Therefore $\sigma_0\in\mathcal{T}$, implying that $\mathcal{T}$ is closed.
\end{proof}

\quad

\section{asymptotic behavior of $u$ at infinity}\label{thm2part1}

Without loss of generality, we assume 
\[
u(0,0)=0,\quad D_xu(0,0)=0.
\]
We name
\[
Q_{H}:=\left\{(x, t) \in \mathbb{R}_{-}^{n+1}: u(x, t)<H\right\},\quad Q_{H}(t):=\left\{x \in \mathbb{R}^{n}: u(x, t)<H\right\}
\]
for $H>0$ and $t\le0$. By Theorem 1.8.2 in \cite{g16}, there exist an affine transformation 
\[
T_{H}x:=a_{H} x+b_{H}
\]
with $\operatorname{det} a_{H}=1$, $R>0$ related to $H$, such that
\[
B_{\alpha_n R}(0) \subset T_{H}\left(Q_{H}(0)\right) \subset B_{R}(0),
\]
where $\alpha_n:=n^{-\frac{3}{2}}$. By Proposition 3.1 in \cite{zbw18}, there exists a constant $C>1$, such that $C^{-1}\le HR^{-2}\le C$. Denote
\[
\Gamma_H(x,t):=\left(\frac{a_Hx}{R},\frac{t}{R^2}\right),\quad Q_H^{*}:=\Gamma_H(Q_H).
\]
Due to Proposition 3.2 in \cite{zbw18}, there exist constants $C_1,C_2>0$ such that 
\begin{equation}\label{location}
E_{C_1}\subset Q_H^{*}\subset E_{C_2}.
\end{equation}
Let
\[
v(y,s):=R^{-2}u\left(\Gamma^{-1}_H(y,s)\right)
\]
and we get that
\begin{equation}\label{vdef}
\left\{\begin{array}{ll}
-v_{s} \operatorname{det} D_y^{2} v=g & \text { in } Q_{H}^{*}, \\
v=HR^{-2}  & \text { on } \partial_{p} Q_{H}^{*}, \\
m_1 \leq -v_{s} \leq m_2 & \text { in } Q_{H}^{*} ,
\end{array}\right.
\end{equation}
where 
\[
g(y,s):=f\left(\Gamma_{H}^{-1}(y,s)\right).
\]
By adapting the techniques in the proof of Theorem 3.2 in \cite{ww92}, there uniquely exists  a p-convex solution $\overline{v}\in C^0(\overline{Q_H^{*}})\cap C^{\infty}(Q_H^{*})$ to the problem
\begin{equation}\label{barvdef}
\left\{\begin{array}{ll}
-\overline{v}_{s} \operatorname{det} D_y^{2} \overline{v}=1 & \text { in } Q_{H}^{*}, \\
\overline{v}=HR^{-2} & \text { on } \partial_{p} Q_{H}^{*}, \\
m_1 \leq -\overline{v}_{s} \leq m_2 & \text { in } Q_{H}^{*} .
\end{array}\right.
\end{equation}

Let 
\begin{equation}\label{epsilondef}
\epsilon_0:=R^{-2},\quad \epsilon_i:=R^{-1}a_He_i,\quad 1\le i\le n.
\end{equation}
Then $\{\epsilon_1,\cdots,\epsilon_n\}$ forms a basis of $\mathbb{R}^n$ and
\begin{equation}\label{cellaverage}
-\mkern-19mu\int_{C_p}g(y,s)dydy=-\mkern-19mu\int_{[0,1]^{n+1}}f(y,s)dydy=1
\end{equation}
where
\[
C_p:=\left\{(y,s)\in\mathbb{R}^{n+1}_-:y=\sum\limits_{i=1}^n\lambda_i\epsilon_i,s=-\lambda_0\epsilon_0,0\le\lambda_i\le1,0\le i\le n\right\}.
\]
In order to establish the homogenization estimate, we construct a periodic corrector function that captures the oscillatory nature of $f$. According to \eqref{vdef}-\eqref{cellaverage} and Remark \ref{general}, given any point $Z\in Q_H^*$, there exists $\xi\in C^{4+\alpha,\frac{4+\alpha}{2}}\left(\mathbb{R}^{n+1}\right)$ satisfying
\begin{equation}\label{periodic corrector}
\left\{\begin{array}{ll}
-\left(\overline{v}_t\left(Z\right)+\xi_t\right)\det\left(D_x^2\overline{v}\left(Z\right)+D_x^2\xi\right)=g & \text { in } \mathbb{R}^{n+1}, \\
\overline{v}_t\left(Z\right)+\xi_t<0, \quad D_x^2\overline{v}\left(Z\right)+D_x^2\xi>0 & \text { in } \mathbb{R}^{n+1}.
\end{array}\right.
\end{equation}
In addition, $\xi$ inherits the same periodicity with $f$ and satisfies that
\begin{equation}\label{average0}
-\mkern-19mu\int_{C_p}\xi(y,s)dyds=0.
\end{equation}

Then there exists a $C^0$ estimate for $\xi$.

\begin{lemma}\label{xilinfty}
There exists a constant $C(n)>0$ such that
\[
\|\xi\|_{C^0\left(\mathbb{R}^{n+1}\right)}\le 4\|D_x^2\overline{v}\left(Z\right)\|\sum_{i=1}^n|\epsilon_i|^2-2\overline{v}_t\left(Z\right)\epsilon_0.
\]
\end{lemma}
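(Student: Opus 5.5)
We mimic Step 1 of the proof of Proposition \ref{c21estimates}, now for the lattice generated by $\epsilon_1,\cdots,\epsilon_n$ and the time period $\epsilon_0$. Write $M:=D_x^2\overline{v}(Z)$ and $\tau:=-\overline{v}_t(Z)>0$. The constraints in \eqref{periodic corrector} read
\[
\xi_t<\tau,\qquad D_x^2\xi>-M\ge-\|M\|I .
\]
Thus $\xi$ grows in $t$ at rate at most $\tau$, and along any line in $x$ it is semiconvex with constant $\|M\|$. The plan is to bound the oscillation of $\xi$ using only these one-sided bounds and periodicity, and then use the zero average \eqref{average0} to turn the oscillation bound into a $C^0$ bound.

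\emph{Time oscillation.} Fix $x$. For $t_2\le t_1\le t_2+\epsilon_0$ we have
\[
\xi(x,t_1)-\xi(x,t_2)=\int_{t_2}^{t_1}\xi_t\le\tau\epsilon_0 .
\]
Since $\xi(x,\cdot)$ is $\epsilon_0$-periodic, any two values of $\xi(x,\cdot)$ can be placed in this order within one period, in either direction. Hence $\operatorname{osc}_t\xi(x,\cdot)\le\tau\epsilon_0$.

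\emph{Space oscillation.} Fix $t$ and the coordinates along the other basis vectors, and consider $\psi(\lambda):=\xi(x+\lambda\epsilon_i,t)$. This function is $1$-periodic with $\psi''=\epsilon_i'D_x^2\xi\,\epsilon_i\ge-\|M\||\epsilon_i|^2$. By periodicity, $\psi'$ vanishes at some $\bar\lambda$ in every interval of length $1$. For $\lambda_1<\lambda_2$ in a window of length $2$, choose $\bar\lambda\in[\lambda_1-1,\lambda_1]$ and integrate twice as in Step 1:
\[
\psi(\lambda_2)-\psi(\lambda_1)=\int_{\lambda_1}^{\lambda_2}\int_{\bar\lambda}^{s}\psi''\ge-c\|M\||\epsilon_i|^2 .
\]
Here $c$ is an absolute constant; the paper's arrangement gives $4$, and a sharper choice of $\bar\lambda$ gives a smaller value. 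Because a single window of length $2$ covers every ordering modulo the period, this gives $\operatorname{osc}\psi\le 4\|M\||\epsilon_i|^2$.

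\emph{Total bound.} Moving from one point of a fundamental cell to another one basis direction at a time, and once in $t$, gives
\[
\operatorname{osc}_{\mathbb{R}^{n+1}}\xi\le4\|M\|\sum_{i=1}^n|\epsilon_i|^2+\tau\epsilon_0 .
\]
By \eqref{average0} and continuity, $\xi$ vanishes somewhere, so $\|\xi\|_{C^0}\le\operatorname{osc}\xi$. Since $\tau\epsilon_0=-\overline{v}_t(Z)\epsilon_0\le-2\overline{v}_t(Z)\epsilon_0$, this is the stated bound. The doubled time term is slack and matches the paper's constant, and the $C(n)$ named in the statement is not actually needed.

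\emph{Main obstacle.} The delicate step is the one-dimensional semiconvexity argument. One must make sure the critical point $\bar\lambda$ is chosen so that the constant stays absolute and does not pick up extra factors from the non-orthogonal basis. This is handled by working along the line direction $\epsilon_i$ itself, so that the Hessian enters only through $\epsilon_i'D_x^2\xi\,\epsilon_i$.
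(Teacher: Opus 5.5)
Your proposal is correct and follows the paper's proof. Both arguments take the one-sided bounds $\xi_t<-\overline{v}_t(Z)$ and $D_x^2\xi>-D_x^2\overline{v}(Z)$ from \eqref{periodic corrector}, derive one-dimensional oscillation bounds along each $\epsilon_i$ (using a zero of the derivative supplied by periodicity) and in $t$, chain these bounds, and use a zero of $\xi$ from \eqref{average0}. The only difference is that your time bound $-\overline{v}_t(Z)\epsilon_0$ is a factor $2$ sharper than the paper's, which integrates over a window of length $2$; this only makes the stated inequality easier.
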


\begin{proof}
Define for every $(x,t)\in\mathbb{R}^{n+1}$
\[
h_{(i)}(s):=\xi\left(x+s\epsilon_i,t\right),\quad 1\le i\le n
\]
and 
\[
h_{(0)}(s):=\xi\left(x,t+s\epsilon_0\right),
\]
where $s\in\mathbb{R}^1$. \eqref{periodic corrector} gives the estimates for every $s\in\mathbb{R}^1$
\[
h_{(i)}''(s)\ge -\left\|D_x^2\overline{v}\left(Z\right)\right\|\left|\epsilon_i\right|^2,\quad 1\le i\le n
\]
and
\[
h_{(0)}'(s)\le -D_t\overline{v}\left(Z\right)\left|\epsilon_0\right|.
\]

Take $0\le s_1\le s_2\le 2$, then
\[
h_{(0)}\left(s_2\right)-h_{(0)}\left(s_1\right)=\int_{s_1}^{s_2}h_{(0)}'(s)ds\le -2D_t\overline{v}\left(Z\right)\left|\epsilon_0\right|,
\]
implying that
\begin{equation}\label{tlinfty}
\mathop{\text{osc}}\limits_{t\in\mathbb{R}^1}\xi(x,t)\le -2D_t\overline{v}\left(Z\right)\left|\epsilon_0\right|.
\end{equation}

Take $0\le s_1\le s_2\le 2$, then for every $1\le i\le n$
\[
h_{(i)}\left(s_2\right)-h_{(i)}\left(s_1\right)=\int_{s_1}^{s_2}h_{(i)}'(s)ds.
\]
By the periodicity of $h_{(i)}$, there exists $s_0\in[-1,0]$, such that $h_{(i)}'\left(s_0\right)=0$. Then
\[
h_{(i)}\left(s_2\right)-h_{(i)}\left(s_1\right)=\int_{s_1}^{s_2}\int_{s_0}^sh_{(i)}''\left(\alpha\right)d\alpha ds\ge-4\left\|D_x^2\overline{v}\left(Z\right)\right\|\left|\epsilon_i\right|^2,
\]
implying that
\begin{equation}\label{xlinfty}
\mathop{\text{osc}}\limits_{x\in\mathbb{R}^n}\xi(x,t) \le 4\left\|D_x^2\overline{v}\left(Z\right)\right\|\sum\limits_{i=1}^n\left|\epsilon_i\right|^2.
\end{equation}

It follows from \eqref{average0} that there exists $\left(x_0,t_0\right)\in C_p$ such that $\xi\left(x_0,t_0\right)=0$. Combining \eqref{tlinfty} and \eqref{xlinfty}, we proof the lemma.
\end{proof}

Using $\xi$'s existence and $C^0$ estimate, we derive a $C^0$ estimate of the $v-\overline{v}$.

\begin{prop}\label{homogenization}
There exist constants $C,\beta>0$ independent of $H$, such that
\[
\left\|v-\overline{v}\right\|_{C^0\left(\overline{Q_H^*}\right)}\le C\left(\sum_{i=1}^n|\epsilon_i|^2+\epsilon_0\right)^{\beta},
\]
where $v,\overline{v}$ and $\epsilon_i$ for $0\le i\le n$ are defined in \eqref{vdef}-\eqref{epsilondef}.
\end{prop}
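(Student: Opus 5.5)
The plan follows the Caffarelli–Li homogenization scheme, adapted to the parabolic setting as in \cite{zbw18}. We compare $v$ with perturbations of $\overline{v}$ by the periodic corrector $\xi$ of \eqref{periodic corrector}. Write $\epsilon:=\left(\sum_{i=1}^n|\epsilon_i|^2+\epsilon_0\right)^{1/2}$; it suffices to treat $\epsilon$ small. Because \eqref{vdef} and \eqref{barvdef} carry the same boundary data, the difference $v-\overline{v}$ comes only from the oscillation of $g$ at scale $\epsilon$.

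\textbf{Step 1: interior regularity of $\overline{v}$.} By \eqref{location}, $Q_H^*$ is trapped between two fixed parabolic bowls, and $m_1\le-\overline{v}_s\le m_2$. Interior estimates for the parabolic Monge–Ampère equation with constant right-hand side (Pogorelov/Krylov type, as in \cite{ww92,zbw18}) should give the following. On $Q_\delta':=\{Z\in Q_H^*: \mathrm{dist}_p(Z,\partial_pQ_H^*)>\delta\}$ we have
\[
C^{-1}\delta^{k}I\le D_y^2\overline{v}\le C\delta^{-k}I,\qquad \|\overline{v}\|_{C^{4,2}(Q_\delta')}\le C\delta^{-k}
\]
for some fixed $k$. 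Near the parabolic boundary we use a barrier argument, which gives the Hölder bound
\[
|\overline{v}-HR^{-2}|+|v-HR^{-2}|\le C\delta^{\gamma}\quad\text{in }Q_H^*\setminus Q'_\delta .
\]
This bound uses only the two-sided bounds on $g$ and on $-v_s$, together with convexity.

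\textbf{Step 2: local comparison via the corrector.} Fix $Z\in Q'_\delta$ and let $\xi=\xi_Z$. Consider
\[
w^\pm(y,s):=\overline{v}(Z)+D\overline{v}(Z)\cdot(y-y_Z)+\overline{v}_s(Z)(s-s_Z)+\tfrac12(y-y_Z)'D^2\overline{v}(Z)(y-y_Z)+\xi(y,s)\pm\ldots
\]
By \eqref{periodic corrector}, the quadratic part plus $\xi$ solves $-w_s\det D^2w=g$ exactly. Taylor's formula then shows that it differs from $\overline{v}$ by at most $C\delta^{-k}r^3$ on a parabolic cylinder of size $r$ around $Z$. Lemma \ref{xilinfty} gives $|\xi|\le C\delta^{-k}\epsilon^2$.

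We then compare $v$ and $w^\pm$ on this cylinder, after adding $\pm\eta(|y-y_Z|^2-(s-s_Z))$-type terms that make $w^\pm$ strict sub/supersolutions. This yields a local error estimate of the form
\[
|v(Z)-\overline{v}(Z)|\le C\delta^{-k}\left(\epsilon^2 r^{-2}+r\right)+\text{(boundary error)}.
\]

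\textbf{Step 3: closing the comparison.} The step above does not close by itself, because on the cylinder boundary we still need to know $|v-\overline{v}|$. I would therefore follow Caffarelli–Li and apply comparison globally on $Q'_\delta$, with a test function $\overline{v}+\theta\,\xi_{\text{glued}}$. The simplest route is a contradiction/maximum argument. Suppose $\max(v-\overline{v}-\mu\,\psi)$ is attained at an interior point $Z$, where $\psi$ is a fixed strictly p-convex bowl function vanishing on $\partial_pQ_H^*$. Freeze $\xi=\xi_Z$ at that point and use the quadratic approximation of $\overline{v}$ there. The function $v-(\text{quadratic}+\xi)-\mu\psi$ then has an approximate local maximum up to errors $C\delta^{-k}(\epsilon^2r^{-2}+r^3)$ on a cylinder of size $r$. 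This contradicts the comparison principle for $-w_s\det D^2w$, using strict monotonicity supplied by the $\mu\psi$ term, once $\mu\gtrsim\delta^{-k}(\epsilon^2r^{-2}+r)$.

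Choosing $r=\epsilon^{2/3}$ and $\delta=\epsilon^{c}$ with $c$ small, and combining with the boundary layer estimate from Step 1, gives $\|v-\overline{v}\|_{C^0}\le C\epsilon^{\beta}$ for some $\beta>0$. This is the claim, since $\epsilon^\beta=\left(\sum|\epsilon_i|^2+\epsilon_0\right)^{\beta/2}$.

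\textbf{Main obstacle.} I expect Step 3 to be the hardest part: turning the pointwise-frozen corrector into a genuine comparison. This is where the nonlinearity in $(-w_s,\det D^2 w)$ and the lack of uniform ellipticity of $\overline{v}$ near $\partial_pQ_H^*$ must be handled carefully. To control the degeneracy, I would first try tracking all the $\delta^{-k}$ losses from Step 1 explicitly.
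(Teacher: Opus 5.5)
Your overall scheme is the one the paper uses: a maximum-point argument, the corrector $\xi_Z$ frozen at the maximum point, a local comparison on a small parabolic ball $E_r(Z)$, interior derivative bounds for $\overline{v}$ with polynomial loss in $\mathrm{dist}_p(\cdot,\partial_pQ_H^*)$, and an Aleksandrov-type estimate near $\partial_pQ_H^*$. However, the contradiction in Step 3 does not occur as you set it up, because the sign of $\mu\psi$ is reversed.

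Let $\psi$ be strictly p-convex and $P_Z$ the Taylor polynomial of $\overline{v}$ at $Z$. Then $P_Z+\xi_Z$ solves the frozen equation exactly, and $W=P_Z+\xi_Z+\mu\psi$ has larger $D_y^2W$ and larger $-W_s$ than it. Hence $-W_s\det D_y^2W>g$. At a local maximum of $v-W$ you get $D_y^2v\le D_y^2W$ and $v_s\ge W_s$, hence only $g\le -W_s\det D_y^2W$. This is consistent with the previous inequality, so there is no contradiction. Your combination $v-\overline{v}-\mu\psi$ is the right one at a \emph{minimum}, i.e. for bounding $\sup(\overline{v}-v)$, not $\sup(v-\overline{v})$. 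The underlying conflict is this: you chose $\psi$ vanishing on $\partial_pQ_H^*$ so that $-\mu\psi\ge0$ and $\Phi\ge v-\overline{v}$, but the convexity direction needed for the contradiction forces the opposite sign.

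To repair it, maximize $\Phi:=v-\overline{v}+\mu\psi$. Then compare $v$ on $E_r(Z)$ with
$W:=P_Z+\xi_Z-\mu\psi+\nu\,(|y-y_Z|^2-(s-s_Z))$.
The last term is the localizing quadratic that your error count implicitly uses. It gives $v-W$ a gain $\nu r^2>C\delta^{-k}(\epsilon^2+r^3)$ on $\partial_pE_r(Z)$. Meanwhile the conditions $\mu D_y^2\psi>2\nu I$ and $-\mu\psi_s>\nu$ keep $-W_s\det D_y^2W<g$.

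Since now $\Phi\le v-\overline{v}$, you pay $\sup(v-\overline{v})\le\sup\Phi+\mu\sup_{Q_H^*}|\psi|$. This is harmless, because $\mu\lesssim\delta^{-k}\epsilon^{2/3}$ with your choices. Also, $\psi$ need not vanish on $\partial_pQ_H^*$; a uniformly strictly p-convex function vanishing exactly there is not easy to produce. The function $\psi=|y|^2-s-C_2^2$, which is $\le0$ on $Q_H^*$ by \eqref{location}, suffices. With this change your proof goes through.

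The paper organizes the argument slightly differently. It avoids the separate boundary layer and the choice $\delta=\epsilon^c$ by taking the perturbation proportional to $M=\sup(v-\overline{v})$. The maximum point $\tilde X$ of $\zeta-\overline{v}$ then still satisfies $(v-\overline{v})(\tilde X)\ge\frac{7M}{9}$. The Aleksandrov estimate therefore gives $\mathrm{dist}_p(\tilde X,\partial_pQ_H^*)\gtrsim M^{n+1}$, and every loss becomes a power of $M$. The paper also keeps $\overline{v}$ itself in the test function $\eta$, giving back part of the quadratic as the localizing term centered at $\tilde X$. It compares $D_x^2\overline{v}(X)$ and $\overline{v}_t(X)$ with their values at $\tilde X$ using Lemma 2.3 of \cite{zb18}, rather than paying a $C^0$ Taylor remainder $\delta^{-k}r^3$.
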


\begin{proof}

Let $M:=\sup\limits_{Q_H^*}|v-\overline{v}|$. We only treat the case $M=\sup\limits_{Q_H^*}\left(v-\overline{v}\right)>0$. Let $\overline{X}=\left(\overline{x},\overline{t}\right)$ be a maximum point of $v-\overline{v}$. By Proposition 4.1(1) in \cite{gh01}
\begin{equation}\label{Mbdd}
M=v\left(\overline{X}\right)-\overline{v}\left(\overline{X}\right)\le-\overline{w}\left(\overline{X}\right)\le C\text{dist}_p\left(\overline{X},\partial_pQ_H^*\right)^{\frac{1}{n+1}}\le C.
\end{equation}

Let 
\[
\zeta(x,t):=v(x,t)+\frac{M}{36C_2^2}\left|x-\overline{x}\right|^2-\frac{M}{9C_2^2}\left(t-\overline{t}\right),
\]
where $C_2>0$ is from \eqref{location}. Then $\left(\zeta-\overline{v}\right)\left(\overline{X}\right)=M$. It follows from \eqref{location} that
\[
\left|\zeta-v\right|\le \frac{2M}{9} \quad\text{in}\quad Q_H^*.
\]
By the boundary condition $v=\overline{v}$ on $\partial_pQ_H^*$, it follows that $\left|\zeta-\overline{v}\right|\le \frac{2M}{9}$ on $\partial_pQ_H^*$. Thus there exists $\tilde{X}=\left(\tilde{x},\tilde{t}\right)\in Q_H^*$ such that
\[
\left(\zeta-\overline{v}\right)\big(\tilde{X}\big)=\sup\limits_{Q_H^*}\left(\zeta-\overline{v}\right)\ge M.
\] 
Additionally we have that
\[
\left(v-\overline{v}\right)\big(\tilde{X}\big)=\left[\left(\zeta-\overline{v}\right)-\left(\zeta-v\right)\right]\big(\tilde{X}\big)\ge M-\frac{2M}{9}=\frac{7M}{9}.
\]
Combined with $\left(w-\overline{w}\right)\le -\overline{w}$ and Proposition 4.1(1) in \cite{gh01}, we derive that
\[
\text{dist}_p\left(\tilde{X},\partial_pQ_H^*\right)\ge\frac{1}{C}M^{n+1}=:\delta.
\]
By Lemma 2.3 in \cite{zb18}, for $X\in E_{\frac{\delta}{2}}\big(\tilde{X}\big)$, the higher order derivatives of $\overline{w}$ satisfy
\begin{equation}\label{lagrange}
\begin{aligned}
&\left|D_x^3\overline{v}\left(X\right)\right|+\left|D_x^2\overline{v}_t\left(X\right)\right|+\left|D_x\overline{v}_t\left(X\right)\right|+\left|\overline{v}_{tt}\left(X\right)\right|\\
&\le C\left( \text{dist}_p\left(X,\partial_pQ_H^*\right)^{-\beta_3}+\text{dist}_p\left(X,\partial_pQ_H^*\right)^{-\beta_4} \right)\\
&\le C\text{dist}_p\left(X,\partial_pQ_H^*\right)^{-\beta_4}\le CM^{-\beta_4(n+1)},
\end{aligned}
\end{equation}
where $\beta_3,\beta_4$ are positive constants.

Define
\[
\eta\left(X\right):=\overline{v}\left(X\right)+\xi\left(X\right)-\frac{M}{36C_2^2}\left|x-\overline{x}\right|^2+\frac{M}{9C_2^2}\left(t-\overline{t}\right)+\frac{M}{144C_2^2}\left|x-\tilde{x}\right|^2-\frac{M}{18C_2^2}\left(t-\tilde{t}\right),
\]
where $\xi$ is the periodic corrector defined in \eqref{periodic corrector} for $Z=\tilde{X}$. Then
\[
\left(v-\eta\right)\left(X\right)=\zeta\left(X\right)-\left( \overline{v}\left(X\right) + \xi\left(X\right)   + \frac{M}{144C_2^2}\left|x-\tilde{x}\right|^2 -\frac{M}{18C_2^2}\left(t-\tilde{t}\right) \right).
\]

Take $\delta_1:=\frac{M^{\beta_4(n+1)+1}}{C_0}$, where $C_0>0$ is a sufficiently large constant to be determined. Owing to \eqref{Mbdd}, we may choose $C_0$ such that 
\[
\delta_1=\frac{M^{n+1}}{C}\cdot \frac{CM^{(\beta_4-1)(n+1)+1}}{C_0}<\frac{\delta}{2}. 
\]
For $X\in E_{\delta_1}\big(\tilde{X}\big)$, the derivative estimates of $\left|D_x^3\overline{v}\right|,\left|D_x^2\overline{v}_t\right|$ in \eqref{lagrange} yield
\begin{equation}\label{analysis}
\begin{aligned}
D_x^2\eta\left(X\right)&=D_x^2\overline{v}\left(X\right)+D_x^2\xi\left(X\right)-\frac{M}{24C_2^2}I\\
&\le D_x^2\overline{v}\big(\tilde{X}\big)+CM^{-\beta_4(n+1)}\left(\left|x-\tilde{x}\right|+\left|t-\tilde{t}\right|\right)I+D_x^2\xi\left(X\right)-\frac{M}{24C_2^2}I\\
&\le D_x^2\overline{v}\big(\tilde{X}\big)+CM^{-\beta_4(n+1)}\left|X-\tilde{X}\right|_pI+D_x^2\xi\left(X\right)-\frac{M}{24C_2^2}.
\end{aligned}
\end{equation}
In the last line of \eqref{analysis}, by further increasing $C_0$, we ensure that
\[
CM^{-\beta_4(n+1)}\left|X-\tilde{X}\right|_p<CM^{-\beta_4(n+1)}\frac{M^{\beta_4(n+1)+1}}{C_0}<\frac{M}{24r^2},
\]
which implies that for $X\in E_{\delta_1}\big(\tilde{X}\big)$
\[
D_x^2\eta\left(X\right)<D_x^2\overline{v}\big(\tilde{X}\big)+D_x^2\xi\left(X\right).
\]
Similar to \eqref{analysis}, by estimates of $\left|D_x\overline{v}_t\right|,\left|\overline{v}_{tt}\right|$ in \eqref{lagrange}, we obtain that for $C_0$ large
\[
\eta_t\left(X\right)\ge \overline{v}_t\big(\tilde{X}\big)-CM^{-\beta_4(n+1)}\left|X-\tilde{X}\right|_p+\xi_t\left(X\right)+\frac{M}{18C_2^2}>\overline{v}_t\big(\tilde{X}\big)+\xi_t\left(X\right).
\]
Then for $X\in E_{\delta_1}\big(\tilde{X}\big)$ satisfying $D_x^2\eta\left(X\right)>0$ and $\eta_t\left(X\right)<0$, we derive the inequality
\begin{equation}\label{contradiction}
\begin{aligned}
\left(-\eta_t\det D_x^2\eta\right)\left(X\right)&<-\left(\overline{v}_t\big(\tilde{X}\big)+\xi_t\left(X\right)\right)\det\left(D_x^2\overline{v}\big(\tilde{X}\big)+D_x^2\xi\left(X\right)\right)\\
&=f\left(X\right)=\left(-v_t\det D_x^2v\right)\left(X\right).
\end{aligned}
\end{equation}

At $\tilde{X}$, the maximum point of $\zeta-\overline{v}$, there is
\[
v-\eta=\left(\zeta-\overline{v}\right)-\xi\ge\left(\zeta-\overline{v}\right)-C\left(\sum_{i=1}^n|\epsilon_i|^2+\epsilon_0\right)M^{-(4n+2)(n+1)}.
\]
For $X\in \partial_pE_{\delta_1}\big(\tilde{X}\big)$,
\[
\left(v-\eta\right)\left(X\right)\le \left(\zeta-\overline{v}\right)\big(\tilde{X}\big) + C\left(\sum_{i=1}^n|\epsilon_i|^2+\epsilon_0\right)M^{-(4n+2)(n+1)}-\frac{M\delta_1}{C}.
\]

If 
\begin{equation}\label{contradict}
2C\left(\sum_{i=1}^n|\epsilon_i|^2+\epsilon_0\right)M^{-(4n+2)(n+1)}< \frac{M^{\beta_4(n+1)+2}}{CC_0},
\end{equation}
then 
$\left(v-\eta\right)\left(X\right)<\left(v-\eta\right)\big(\tilde{X}\big)$ for any $X\in \partial_pE_{\delta_1}\big(\tilde{X}\big)$. Let $X_1\in E_{\delta_1}\big(\tilde{X}\big)$ be an interior maximum point of $v-\eta$, then $0<D_x^2v\le D_x^2\eta$, $0>v_t\ge \eta_t$ and $-\eta_t\det D_x^2\eta\ge -v_t\det D_x^2v$ at $X_1$. This contradicts  \eqref{contradiction}, which implies the converse inequality of \eqref{contradict}. Thus we obtain the desired bound
\[
M\le C\left(\sum_{i=1}^n|\epsilon_i|^2+\epsilon_0\right)^{\frac{1}{\left(\beta_4+4n+2\right)(n+1)+2}}.
\]
\end{proof}

We introduce the set
\[
E:=\left\{k_1e_1+\cdots+k_ne_n:k_1,\cdots,k_n\in\mathbb{Z},k_1^2+\cdots+k_n^2>0\right\}.
\]
Applying the nonlinear perturbation method, we attain the asymptotic behavior. 

\begin{prop}\label{asymptotic}
There exist an $n\times n$ positive definite matrix $A$, $\tau>0$ with $\tau\operatorname{det}T^2=1$ and constants $\epsilon,C>0$, such that
\[
C^{-1}I\le A\le CI, \quad m_1\le \tau\le m_2,
\]
\[
\left| u(x,t)-\frac{1}{2}x'Ax+\tau t \right|\le C\left( |x|^{2}-t \right)^{\frac{2-\epsilon}{2}} \quad\text{for}\quad |x|^2-t\ge C.
\]
\end{prop}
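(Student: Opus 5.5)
The plan is to run the nonlinear perturbation (iteration) scheme of \cite{zbw18}, as adapted by Zhang--Bao \cite{zb18}, with Proposition \ref{homogenization} as the engine. Fix a dyadic sequence of heights $H_k=4^k H_0$. For each $k$, Proposition \ref{homogenization} compares the rescaled solution $v$ with the solution $\overline{v}$ of the constant-right-hand-side problem \eqref{barvdef} on the normalized section $Q_{H_k}^*$. The error is $C(\sum|\epsilon_i|^2+\epsilon_0)^\beta$. By \eqref{epsilondef} and $C^{-1}\le HR^{-2}\le C$, we have $|\epsilon_i|\le \|a_H\|R^{-1}$ and $\epsilon_0=R^{-2}\sim H^{-1}$. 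Hence the error is $\le C H_k^{-\beta'}$ for some $\beta'>0$, provided we first know that $\|a_{H}\|\le CH^{\gamma}$ with $\gamma<1/2$. I would establish this a priori from the engulfing and doubling properties of sections in \cite{zbw18}, which give polynomial control of the eccentricity.

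The next step uses the interior regularity of $\overline{v}$. Since $\overline{v}$ solves $-\overline{v}_s\det D^2_y\overline{v}=1$ with constant boundary data on a bowl-shaped domain comparable to $E_{C_1}\subset Q^*\subset E_{C_2}$, \cite{gh01} and Lemma 2.3 of \cite{zb18} give uniform $C^{3,1}$ (indeed smooth) interior estimates in the half-size section. Combined with the $C^0$ closeness, this yields a p-convex quadratic $P_k(y,s)=-\tau_k s+\frac12 y'A_k y+b_k\cdot y+c_k$ such that $|v-P_k|\le C(H_k^{-\beta'}+\rho^3)$ on $\rho$-subsections, with $\tau_k\det A_k=1$. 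The choice of $\rho$ and the use of the Gutiérrez--Huang Liouville structure follow the $f\equiv 1$ case. Rescaling back, the quadratic that best fits $u$ at scale $H_{k+1}$ differs from the one at scale $H_k$ by a relative amount $O(H_k^{-\beta'})$. In particular, the normalizing maps $a_{H_{k+1}}a_{H_k}^{-1}$ are within $O(H_k^{-\beta'})$ of an isometry (after rotation). Thus the product $\prod_k a_{H_{k+1}}a_{H_k}^{-1}$ converges, which gives $\|a_H\|,\|a_H^{-1}\|\le C$ uniformly. This closes the bootstrap for the eccentricity bound, and the quadratic forms $A_k$ and the speeds $\tau_k$ form Cauchy sequences.

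Summing the telescoping series produces limits $A$, $\tau$ with $\tau\det A=1$ (the statement's normalization, with $T$ standing for $A^{1/2}$ up to the scaling). The differences $|A_k-A|$ and $|\tau_k-\tau|$ are $O(H_k^{-\beta'})$. Consequently, on the section $Q_{H_k}\setminus Q_{H_{k-1}}$, where $|x|^2-t\sim H_k$, we obtain
\[
\left|u(x,t)-\tfrac12 x'Ax+\tau t\right|\le CH_k^{1-\beta'}\le C(|x|^2-t)^{\frac{2-\epsilon}{2}}
\]
with $\epsilon=2\beta'$. Here the linear term $b\cdot x$ is absorbed, since its contribution is $O(H^{1/2})$, which is lower order. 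The constant is absorbed because $u(0,0)=0$. The bounds $m_1\le\tau\le m_2$ follow from \eqref{ut} passed to the limit, since $\tau=\lim -v_s$ on average. The bounds $C^{-1}I\le A\le CI$ follow from the uniform bounds on $a_H$ together with $\tau\det A=1$.

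The main obstacle is the a priori eccentricity control in the first step: showing that $\|a_H\|R^{-1}\to 0$ polynomially, so that Proposition \ref{homogenization} actually gives a decaying error. I would first try to obtain a crude bound $\|a_H\|\le C H^{\gamma}$ with small $\gamma$, using the $C^{1,\alpha}$-type estimates of \cite{zbw18} for p-convex solutions with $m_1\le -u_t\le m_2$. This crude bound then feeds the iteration, which self-improves it to boundedness.
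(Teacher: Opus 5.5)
Your proposal follows the paper's route. The paper also obtains the decay $|R^{-1}a_He|\le CR^{-\alpha'}|e|$ (your bound $\|a_H\|\le CH^{\gamma}$ with $\gamma<1/2$), uses Proposition \ref{homogenization} to get $\|v-\overline{v}\|_{C^0}\le CR^{-\beta\min\{2\alpha',2\}}$, and then runs the iteration of Proposition 4.1 in \cite{zb18}. The eccentricity bound you flag as the main obstacle is simply imported there from Lemma 2.1 of \cite{cl04}, applied to the convex slice $u(\cdot,0)$, whose Monge--Amp\`ere measure is pinched by \eqref{ut}; that lemma rests on the same doubling/engulfing argument you describe.
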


\begin{proof}
For $e\in E$, define the rescaled vector
\begin{equation}\label{tildee}
\tilde{e}:=R^{-1}a_He. 
\end{equation}
By Lemma 2.1 in \cite{cl04}, there exist $\alpha', C>0$, such that 
\[
|\tilde{e}|\le CR^{-\alpha'}|e|. 
\]
Lemma \ref{homogenization} implies that for $R>1$
\[
\left\|v-\overline{v}\right\|_{C^0\left(\overline{Q_H^*}\right)}\le C\left(\sum_{i}\left\|\tilde{e_i}\right\|^2+R^{-2}\right)^\beta \le CR^{-\beta\min\left\{2\alpha',2\right\}}.
\]
Following the proof of Proposition 4.1 in \cite{zb18}, we obtain the result.
\end{proof}

Through a simalar process of proving Proposition 4.5 in \cite{zb18}, we get the following Hessian estimates.

\begin{prop}\label{bdd}
There exists $C>0$, such that $C^{-1}I\le D_x^2u\le CI$ in $\mathbb{R}^{n+1}_-$.
\end{prop}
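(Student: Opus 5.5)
We prove the two-sided bound on $D_x^2u$ by a blow-down argument that combines the asymptotics of Proposition \ref{asymptotic} with interior regularity for the rescaled problem, following the scheme of Proposition 4.5 in \cite{zb18}. Fix $(x_0,t_0)\in\mathbb{R}^{n+1}_-$ with $|x_0|^2-t_0=:R^2$ large; points with $|x_0|^2-t_0\le C$ lie in a compact set where $u\in C^{2,1}$ and strict convexity hold, so they are handled separately. We rescale
\[
u_R(y,s):=R^{-2}u(Ry,R^2s).
\]
Then $u_R$ solves $-\partial_s u_R\det D_y^2u_R=f_R:=f(Ry,R^2s)$ with $m_1\le -\partial_su_R\le m_2$. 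By Proposition \ref{asymptotic},
\[
\Big|u_R(y,s)-\tfrac12 y'Ay+\tau s\Big|\le CR^{-\epsilon}(|y|^2-s)^{\frac{2-\epsilon}{2}}
\]
for $|y|^2-s\ge CR^{-2}$. Hence on a fixed region $\{1/4\le |y|^2-s\le 4\}$, and on any fixed compact set away from the origin, $u_R$ is uniformly close to the quadratic $P(y,s)=\frac12y'Ay-\tau s$, with error $O(R^{-\epsilon})$.

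\textbf{Step 1: sections are comparable to parabolic balls.} Around the rescaled point $Y_0=(x_0/R,t_0/R^2)$, which lies on $|y|^2-s=1$, consider the section
\[
S_h:=\{(y,s): s\le s_0,\ u_R(y,s)<\ell(y)+(s_0-s)\,\partial_s(\cdots)+h\},
\]
taken with respect to the supporting affine function of $u_R$ at $Y_0$. Using the closeness to $P$ together with $C^{-1}I\le A\le CI$ and $m_1\le\tau\le m_2$, we show that for a fixed small $h$ and all large $R$ the section is trapped between two parabolic balls $E_{c}(Y_0)\subset S_h\subset E_{C}(Y_0)$. The reason is that a convex function uniformly close to a strictly convex quadratic has sections close to those of the quadratic. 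This is the standard step in \cite{cl04,zb18}.

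\textbf{Step 2: interior $C^{2,1}$ estimate.} With normalized sections, $f_R$ bounded between positive constants, and $f_R$ having the periodicity $(e_i/R,\,1/R^2)$, we need a uniform bound on $D_y^2u_R(Y_0)$. Mere boundedness of $f_R$ does not give $C^{1,1}$, and $f_R$ is not uniformly Hölder as $R\to\infty$. So we would again use the homogenization comparison of Proposition \ref{homogenization} inside $S_h$: compare $u_R$ with the solution $\bar u$ of $-\bar u_s\det D^2\bar u=1$ (after normalizing the average, by \eqref{cellaverage}) having the same boundary data, and iterate over dyadic sections to get a $C^{1,\beta}$-type improvement.

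The cleaner route, and the one we would try first, is to work at the unscaled level with the periodic structure. For each $e\in E$ and each integer $l$, the difference quotients
\[
\frac{u(x+e,t)-u(x,t)}{|e|}\quad\text{and}\quad u(x,t-l)-u(x,t)
\]
satisfy linear parabolic equations. Since $u$ is $C^{2,1}$ on one period cell near any point, and the asymptotics show $u$ grows like $P$, we then use the periodicity of $f$ together with the uniform closeness of $u(\cdot+k)-u$ to a linear function to transfer the Hessian bound from a fixed cell to all of space. Concretely, the Pogorelov-type interior estimate for $-u_t\det D^2u=f$ with $f\in C^{2+\alpha,\frac{2+\alpha}{2}}$ holds on sections of height $h\sim R^2$ normalized to unit size. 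The periodicity makes $\|f_R\|_{C^{1,1}}$ scale favorably, since the derivatives of $f_R$ pick up positive powers of $R$ only through the normalization of the sections. To avoid this, we apply the estimate on sections of unit height in the original variables rather than on the big ones. Such sections are uniformly comparable to unit parabolic balls; this follows from Step 1 via the engulfing property and the bounds $m_1\le -u_t\le m_2$. On them $f$ is uniformly $C^{2+\alpha}$, so the Pogorelov/Evans–Krylov estimates give $|D_x^2u|\le C$, and the equation plus the bounds on $u_t$ then give $D_x^2u\ge C^{-1}I$.

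\textbf{Main obstacle.} The hard point is Step 1 at unit scale: showing that sections of $u$ of fixed height $h$ at arbitrary points have uniformly controlled eccentricity. Without this, the interior estimates cannot be made uniform. We would derive it from the large-scale normalization by a Caffarelli-type strict convexity and engulfing argument, using the homogenized closeness at every dyadic scale between $1$ and $R$.
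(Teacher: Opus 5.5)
Your overall architecture matches what the paper's appeal to Proposition 4.5 of \cite{zb18} amounts to. It has three parts: uniformly balanced sections of bounded height at every point; interior regularity at that fixed scale, where $f$ has bounded $C^{2+\alpha,\frac{2+\alpha}{2}}$ norms (unlike $f_R$); and the lower bound from $\det D_x^2u=f/(-u_t)\ge \inf f/m_2$ combined with the upper bound.

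The gap is in the step that carries all the weight: uniform control of the eccentricity of sections of fixed height at arbitrary points. You do not establish it. Your Step 1 is centered at the origin. It therefore only yields balanced sections of height $\sim R^2$ at a point at parabolic distance $R$ from the origin. The claim that unit-height sections are then balanced ``via the engulfing property and the bounds $m_1\le -u_t\le m_2$'' does not hold. Engulfing compares sections of comparable height at nearby points. Neither it nor the bounds on $u_t$ control how the shape of a section at $(x_0,t_0)$ may change as its height decreases from order $R^2$ to order $1$.

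Your alternative of transferring a Hessian bound from one period cell also fails. $C^0$-closeness of $u(\cdot+e,\cdot)-u$ to an affine function gives no control of $D_x^2u$. Moreover, exact periodicity of $D_x^2u$ is essentially the conclusion of Theorem \ref{liouville}, which is proved later using this very proposition.

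The missing idea is to recenter, rather than to descend in scale from the origin-centered asymptotics. For any $(x_0,t_0)\in\mathbb{R}^{n+1}_-$, set $u^{(x_0,t_0)}(x,t):=u(x_0+x,t_0+t)-u(x_0,t_0)-D_xu(x_0,t_0)\cdot x$.
\begin{itemize}
\item This is again a p-convex solution in $\mathbb{R}^{n+1}_-$ satisfying \eqref{ut}, normalized at the origin.
\item Its right-hand side is $f(x_0+\cdot,t_0+\cdot)$, which has the same periods, the same bounds and the same cell average.
\item The constants in Proposition \ref{asymptotic} depend only on such structural data. So it applies at every base point with uniform $C$ and $\epsilon$, and with uniform bounds on the limiting $A$ and $\tau$.
\item This gives $E_{c\sqrt h}\subset\{u^{(x_0,t_0)}<h\}\subset E_{C\sqrt h}$ for all $h\ge h_0$, uniformly in $(x_0,t_0)$.
\item Fix $h=h_0$ and rescale by $(\sqrt{h_0},h_0)$. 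This produces a normalized section whose data have norms depending only on $h_0$ and $f$, so the interior estimates bound $D_x^2u(x_0,t_0)$.
\end{itemize}
Your proposed ``homogenized closeness at every dyadic scale between $1$ and $R$'', carried out around $(x_0,t_0)$ itself, is exactly this recentered form of Proposition \ref{asymptotic}. As written, however, the proposal leaves this decisive step unproved.
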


\section{proof of Theorem \ref{liouville}}\label{thm2part2}

Let $\lambda>0$. Define the parabolic scaling
\[
u^\lambda(x,t):=\lambda^{-2}u(\lambda x,\lambda^2 t),\quad Q(x,t):=-\tau t+\frac{1}{2}x'Ax,
\] 
where the matrix $A$ and constant $\tau$ are as specified in Proposition \ref{asymptotic}. To obtain a refined description of $u$, we establish the following convergence properties of $u^\lambda$.

\begin{lemma}\label{gradient holder}
$u^\lambda\rightarrow Q$ in $C^0_{loc}\left(\mathbb{R}^{n+1}_-\right)$ as $\lambda\rightarrow\infty$. Moreover, for every $\delta\in(0,1)$, $D_xu^\lambda\rightarrow D_xQ$ in $C^{\delta,\frac{\delta}{2}}_{loc}\left(\mathbb{R}^{n+1}_-\right)$ along a subsequence as $\lambda\rightarrow\infty$.
\end{lemma}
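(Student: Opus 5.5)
The plan is to get both parts of the lemma from Proposition \ref{asymptotic} and Proposition \ref{bdd} by rescaling. Interpolation then upgrades uniform convergence to Hölder convergence of the gradients.

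\textbf{Step 1: $C^0_{loc}$ convergence.} Fix a compact set $K\subset\mathbb{R}^{n+1}_-$, so $K\subset\{|x|^2-t\le \rho^2\}$ for some $\rho>0$. For $(x,t)\in K$ we have $u^\lambda(x,t)-Q(x,t)=\lambda^{-2}\bigl(u(\lambda x,\lambda^2t)-Q(\lambda x,\lambda^2 t)\bigr)$, using that $Q$ is parabolically $2$-homogeneous. We split $K$ into two parts.
\begin{itemize}
\item Where $\lambda^2(|x|^2-t)\ge C$, Proposition \ref{asymptotic} gives
\[
|u^\lambda-Q|\le C\lambda^{-2}\bigl(\lambda^2(|x|^2-t)\bigr)^{\frac{2-\epsilon}{2}}\le C\lambda^{-\epsilon}\rho^{2-\epsilon}.
\]
\item Where $\lambda^2(|x|^2-t)<C$, both $u$ and $Q$ are bounded by a constant on the corresponding bounded region in the original variables, so the difference is $O(\lambda^{-2})$.
\end{itemize}
Hence $\sup_K|u^\lambda-Q|\to0$.

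\textbf{Step 2: uniform $C^{1,1}$-type bounds.} Note that $D_x^2u^\lambda(x,t)=D_x^2u(\lambda x,\lambda^2t)$ and $u^\lambda_t(x,t)=u_t(\lambda x,\lambda^2 t)$. Proposition \ref{bdd} and \eqref{ut} therefore give $C^{-1}I\le D_x^2u^\lambda\le CI$ and $m_1\le -u^\lambda_t\le m_2$, uniformly in $\lambda$. This yields uniform bounds for $D_xu^\lambda$ in $x$-Lipschitz norm on $K$. Local boundedness of $D_xu^\lambda$ itself follows from the $C^0$ bound of $u^\lambda$ together with the Hessian bound, through the standard convexity/interpolation estimate on a slightly larger compact set.

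The remaining issue, and the main obstacle, is time regularity of $D_xu^\lambda$: $u_t$ is only bounded, and there is no $D_xu_t$ bound. The plan is to use the standard parabolic interpolation. Write $w=u^\lambda(\cdot,t)-u^\lambda(\cdot,s)$. Then $|w|\le m_2|t-s|$, and $|D_x^2w|\le 2C$. By Landau-type interpolation on a ball of radius $r=|t-s|^{1/2}$,
\[
|D_xw|\le C\bigl(r^{-1}|t-s|+r\bigr)\le C|t-s|^{1/2}.
\]
So $D_xu^\lambda$ is uniformly Lipschitz in $x$ and $\tfrac12$-Hölder in $t$ on $K$, that is, bounded in $C^{1,\frac12}$ in the parabolic sense.

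\textbf{Step 3: compactness.} For $\delta\in(0,1)$, the embedding of $C^{1,\frac12}(K)$ into $C^{\delta,\frac{\delta}{2}}(K)$ is compact. Arzelà–Ascoli together with a diagonal argument over an exhaustion by compact sets gives a subsequence $\lambda_k\to\infty$ with $D_xu^{\lambda_k}\to G$ in $C^{\delta,\frac\delta2}_{loc}$. Since $u^{\lambda_k}\to Q$ locally uniformly by Step 1, we get $G=D_xQ$ in the distributional sense, hence pointwise. This proves the lemma.
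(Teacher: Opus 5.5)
Your proof is correct and follows the same route as the paper. You rescale Proposition \ref{asymptotic} to get the $C^0_{loc}$ convergence, combine the uniform Hessian bound of Proposition \ref{bdd} with \eqref{ut} to get uniform parabolic H\"older control of $D_xu^\lambda$, and conclude with Arzel\`a--Ascoli. The differences are only in detail: you prove the time regularity of $D_xu^\lambda$ by an explicit interpolation on balls of radius $|t-s|^{1/2}$, which gives the sharp exponent $\frac12$, where the paper cites a lemma of Ladyzhenskaya--Solonnikov--Ural'tseva; and you treat the bounded region $\lambda^2(|x|^2-t)<C$, which the paper leaves implicit.
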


\begin{proof}
By Proposition \ref{asymptotic}, we have for $\lambda^2\left(|x|^2-t\right)\ge C$ that
\[
\left|u^\lambda(x,t)-Q(x,t)\right|\le C\lambda^{-\epsilon}\left(|x|^2-t\right)^{\frac{2-\epsilon}{2}}.
\] 
We conclude that $u^\lambda\rightarrow Q$ in $C^0_{loc}\left(\mathbb{R}^{n+1}_-\right)$ as $\lambda\rightarrow\infty$. Proposition \ref{bdd} gives that
\[
\left|D_xu^\lambda(x_1,t)-D_xu^\lambda(x_2,t)\right|\le C|x_1-x_2|
\] 
for any $x_1,x_2\in\mathbb{R}^n$ and $t\in\left(-\infty,0\right]$. Combined with \eqref{ut} and Lemma 3.1 of Chapter 2 in \cite{lsu68}, we have for every $\delta'\in(0,1)$ that
\[
\left|D_xu^\lambda(x,t_1)-D_xu^\lambda(x,t_2)\right|\le C(K)|t_1-t_2|^{\frac{\delta'}{2}}
\]
for every bounded set $K$ in $\mathbb{R}^{n+1}_-$ and $(x,t_1),(x,t_2)\in K$, where $C(K)$ is a positive constant depending on $K$. Thus for every $\delta''\in\left(0,\delta'\right)$, there is
\[
\left|D_xu^\lambda(x_1,t_1)-D_xu^\lambda(x_2,t_2)\right|\le C(K)\left(|x_1-x_2|^2+|t_1-t_2|\right)^{\frac{\delta''}{2}}.
\]
By Arzela-Ascoli theorem,  we get that for every $\delta\in(0,1)$ 
\[
D_xu^\lambda\rightarrow D_xQ \quad \text{in} \quad C^{\delta,\frac{\delta}{2}}_{loc}\left(\mathbb{R}^{n+1}_-\right)
\]
along a subsequence as $\lambda\rightarrow\infty$.
\end{proof}

For $(x,t)\in\mathbb{R}^{n+1}_-$, $k\in\mathbb{N}^+$, we define the spatial 2-nd difference quotient
\[
\Delta_e^2u(x,t):=|e|^{-2}\left(u(x+e,t)+u(x-e,t)-2u(x,t)\right)
\]
and the temporal difference quotient
\[
\Delta_k^tu(x,t):=k^{-1}\left(u(x,t)-u(x,t-k)\right).
\]
We need the following equations for $\Delta_e^2u$ and $\Delta_k^tu$ derived from the periodicity of $u$ and the concavity of the function $F(a,M):=\log a+\log\det M$ for $a>0$ and $n\times n$ symmetric positive definite matrix $M$.

\begin{lemma}
\begin{equation}\label{xsubsolution}
\frac{1}{u_t}D_t\Delta_e^2u+u^{ij}D_{x_ix_j}\Delta_e^2u\ge0.
\end{equation}
\begin{equation}\label{tsubsolution}
\frac{1}{u_t(x,t-k)}D_t\Delta_k^tu+u^{ij}(x,t-k)D_{x_ix_j}\Delta_k^tu\ge0.
\end{equation}
\begin{equation}\label{tsupersolution}
\frac{1}{u_t(x,t)}D_t\Delta_k^tu+u^{ij}(x,t)D_{x_ix_j}\Delta_k^tu\le0.
\end{equation}
\end{lemma}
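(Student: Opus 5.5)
The plan is to obtain all three inequalities from a single pointwise concavity argument. The argument uses the equation written in logarithmic form,
\[
F\left(-u_t,D_x^2u\right)=\log f,\qquad F(a,M)=\log a+\log\det M,
\]
together with the invariance of $f$ under the relevant shifts. The form is legitimate because, by \eqref{ut}, $-u_t\ge m_1>0$, and by Proposition \ref{bdd}, $D_x^2u$ is positive definite. So $(-u_t,D_x^2u)$ lies in the domain $\{a>0,\ M>0\}$, which is convex and on which $F$ is concave.

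The derivative of $F$ at $(a,M)$ is $F_a=a^{-1}$ and $F_M=M^{-1}$, so concavity gives, for any two points $P,P'$ of the domain,
\[
F(P')-F(P)\le \frac{a'-a}{a}+\operatorname{tr}\left(M^{-1}(M'-M)\right).
\]
The key bookkeeping is that with $a=-u_t$ we have $\frac{a'-a}{a}=\frac{u_t'-u_t}{u_t}$. This is exactly the coefficient $\frac{1}{u_t}D_t$ appearing in \eqref{xsubsolution}--\eqref{tsupersolution}. Moreover, $\Delta_e^2$ and $\Delta_k^t$ are linear combinations of translates of $u$, so they commute with $D_t$ and $D_{x_ix_j}$. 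Since $u\in C^{2,1}$, all the expressions in the lemma make sense.

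For \eqref{xsubsolution}, fix $(x,t)$ and $e\in E$. The periodicity $f(x\pm e,t)=f(x,t)$ gives $F$ the same value at the three points $(x,t)$ and $(x\pm e,t)$. I apply the concavity inequality with base point $P$ at $(x,t)$ and $P'$ at $(x+e,t)$, and again with $P'$ at $(x-e,t)$. Adding the two inequalities, the left side is $0$, and the right side is
\[
\frac{1}{u_t}\left(u_t(x+e,t)+u_t(x-e,t)-2u_t(x,t)\right)+u^{ij}\left(u_{ij}(x+e,t)+u_{ij}(x-e,t)-2u_{ij}(x,t)\right).
\]
Dividing by $|e|^2$ gives \eqref{xsubsolution}, with all coefficients evaluated at $(x,t)$.

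For the temporal inequalities, $(x,t-k)\in\mathbb{R}^{n+1}_-$ whenever $(x,t)$ is, and the time periodicity gives $f(x,t-k)=f(x,t)$ for $k\in\mathbb{N}^+$. I apply concavity once with base point at $(x,t-k)$ and comparison point at $(x,t)$. This gives
\[
0\le \frac{u_t(x,t)-u_t(x,t-k)}{u_t(x,t-k)}+u^{ij}(x,t-k)\left(u_{ij}(x,t)-u_{ij}(x,t-k)\right).
\]
Dividing by $k$ yields \eqref{tsubsolution}. I then swap the roles, taking the base point at $(x,t)$ and the comparison point at $(x,t-k)$. The right side becomes $-k$ times the operator with coefficients frozen at $(x,t)$, applied to $\Delta_k^tu$, so that operator is $\le 0$; this is \eqref{tsupersolution}.

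There is no genuine obstacle here; the argument is elementary. The only points needing care are the sign conventions for the time derivative, which come from $a=-u_t$ and $u_t<0$, and checking that each inequality's coefficients are evaluated at the base point of the tangent-plane inequality. That placement is what distinguishes \eqref{tsubsolution} from \eqref{tsupersolution}.
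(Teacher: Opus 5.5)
Your proof is correct and is essentially the paper's own argument. The paper gives no details beyond saying the three inequalities follow from periodicity and the concavity of $F(a,M)=\log a+\log\det M$; your tangent-plane inequalities are exactly that derivation, taken at base point $(x,t)$ and summed over $(x\pm e,t)$ for \eqref{xsubsolution}, and taken at base points $(x,t-k)$ and $(x,t)$ for \eqref{tsubsolution} and \eqref{tsupersolution} respectively, with the sign bookkeeping $\frac{a'-a}{a}=\frac{u_t'-u_t}{u_t}$ for $a=-u_t$ handled correctly. You also rightly rely on the periodicity of $f$, which is what is actually needed; the paper's wording ``periodicity of $u$'' cannot be meant literally, since $u$ itself is not periodic.
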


Introduce the notations
\[
K_-:=B_1\times(-5,-4),\quad K_+:=B_1\times(-1,0).
\]
The following proposition establishes the precise supremum of $\Delta_e^2u$. 

\begin{prop}\label{deltax}
For every $e\in E$, we have the identity
\[
\sup\limits_{\mathbb{R}^{n+1}_-}\Delta_e^2u=\frac{e'Ae}{|e|^2}.
\]
\end{prop}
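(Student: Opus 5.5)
We follow the Caffarelli--Li strategy for difference quotients, adapted to the parabolic setting. Fix $e\in E$ and set $M_e:=\sup_{\mathbb{R}^{n+1}_-}\Delta_e^2u$. By Proposition \ref{bdd} and the mean value theorem, $M_e$ is finite.

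\textbf{Lower bound.} We first show $M_e\ge e'Ae/|e|^2$. The scaling $u^\lambda$ satisfies
\[
\Delta^2_{e/\lambda}u^\lambda(x,t)=\Delta_e^2u(\lambda x,\lambda^2t).
\]
Lemma \ref{gradient holder} gives $u^\lambda\to Q$ locally uniformly, so we take the spatial average over a large set. Concretely, for fixed $t$ and a ball $B_r$, the average of $\Delta_e^2u(\cdot,t)$ over $B_{\lambda r}$ equals $|e|^{-2}$ times a telescoping second difference of averages of $u$. Rescaling, this becomes
\[
\lambda^2|e|^{-2}\,\dashint_{B_r}\bigl(u^\lambda(y+e/\lambda,t/\lambda^2)+u^\lambda(y-e/\lambda,t/\lambda^2)-2u^\lambda(y,t/\lambda^2)\bigr)dy.
\]
Write this as an integral of $e'D_x^2u^\lambda e$ against a tent kernel, then integrate by parts once onto $D_xu^\lambda$. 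The $C^{\delta,\delta/2}_{loc}$ convergence $D_xu^\lambda\to D_xQ$ then shows the averages converge to $e'Ae/|e|^2$. Hence $M_e\ge e'Ae/|e|^2$.

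\textbf{Upper bound.} This is the main obstacle. Suppose $M_e>e'Ae/|e|^2$. By \eqref{xsubsolution}, $\Delta_e^2u$ is a subsolution of the linearized operator
\[
L=\frac{1}{u_t}D_t+u^{ij}D_{ij},
\]
which is uniformly parabolic with bounded measurable coefficients by \eqref{ut} and Proposition \ref{bdd}. Take points $X_k$ with $\Delta_e^2u(X_k)\to M_e$, and use the periodicity of $f$ to translate $X_k$ into a fixed cell. More precisely, the translates $u_k(x,t):=u(x+x_k',t+t_k')-(\text{affine})$ with $x_k'\in\mathbb{Z}^n$ and $t_k'\in\mathbb{Z}$ solve the same equation with the same $f$. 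By Proposition \ref{bdd} and the Evans--Krylov and Schauder estimates, these translates are locally bounded in $C^{2+\alpha,\frac{2+\alpha}{2}}$. Pass to a limit $u_\infty$: it is an ancient solution of the same equation, $\Delta_e^2u_\infty\le M_e$ everywhere, and equality holds at an interior point. The strong maximum principle (Krylov--Safonov) for the limiting linear operator then forces $\Delta_e^2u_\infty\equiv M_e$ on the whole backward region $\{t\le t_0\}$, which for an ancient problem is the entire $\mathbb{R}^{n+1}_-$ after the shift.

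\textbf{Contradiction.} Next we show that $u_\infty$ has the same blow-down $Q$. This holds because the asymptotics of Proposition \ref{asymptotic} are uniform under lattice translations, with the affine corrections controlled by Proposition \ref{bdd}. Applying the averaging argument of the lower bound to $u_\infty$ yields average $e'Ae/|e|^2<M_e$, contradicting $\Delta_e^2u_\infty\equiv M_e$.

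If the compactness step proves delicate, the fallback is the weak Harnack inequality for supersolutions. Apply it to $M_e-\Delta_e^2u\ge0$ between the cylinders $K_-$ and $K_+$ (rescaled to size $\lambda$), combined with the smallness of the averages of $M_e-\Delta_e^2u$ over $\lambda K_-$ coming from the blow-down. This forces $\inf_{\lambda K_+}(M_e-\Delta_e^2u)$ to be small in a way that contradicts the gap $M_e-e'Ae/|e|^2>0$. That this version of the argument makes $K_\pm$ relevant suggests it is the intended route.
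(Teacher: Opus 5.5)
Your lower bound is fine; it is the paper's averaging claim (integrate the tent kernel by parts onto $D_xu^\lambda$ and use $D_xu^\lambda\to D_xQ$ locally uniformly).

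\textbf{The gap in your main route.} The upper bound fails at the step asserting that the translate-limit $u_\infty$ has the same blow-down $Q$. Proposition \ref{asymptotic} measures $u-Q$ from the origin only. Near a translation point $X_k=(x_k',t_k')$ with $|x_k'|^2-t_k'=R_k^2\to\infty$, it gives $|u-Q|\le CR_k^{2-\epsilon}$ on a parabolic neighbourhood of size $\lambda\ll R_k$. Since $Q(x+x_k',t+t_k')-Q(x,t)$ is affine in $x$, subtracting an affine function still leaves an error of order $R_k^{2-\epsilon}$. For fixed $\lambda$ this blows up as $k\to\infty$, so it is certainly not $o(\lambda^2)$.

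Each translate $u_k$ does have blow-down $Q$, but at a rate that degenerates in $k$. Passing to $u_\infty$ therefore requires interchanging the limits $k\to\infty$ and $\lambda\to\infty$, and nothing available justifies this; Proposition \ref{bdd} only bounds the Hessian. A priori $u_\infty$ is just another ancient solution with the same $f$ and bounds, with its own blow-down $-\tau_\infty t+\frac12x'A_\infty x$. Then $\Delta_e^2u_\infty\equiv M_e$ only says $e'A_\infty e/|e|^2=M_e$, which contradicts nothing. Proving $A_\infty=A$ would need a uniform-in-centre large-scale regularity estimate, which is essentially the Liouville theorem itself.

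\textbf{Your fallback.} This is the paper's actual argument. It works precisely because it never translates: $\lambda K_-=B_\lambda\times(-5\lambda^2,-4\lambda^2)$ stays where the blow-down information lives, while $\lambda K_+=B_\lambda\times(-\lambda^2,0)$ exhausts $\mathbb{R}^{n+1}_-$. As written, however, the logic runs backwards. Set $\beta:=e'Ae/|e|^2$ and $w:=M_e-\Delta_e^2u$. The averages of $w$ over $\lambda K_-$ are not small: by your own lower-bound computation they tend to $M_e-\beta>0$. Also, the weak Harnack inequality bounds $\inf_{\lambda K_+}w$ from below; it cannot force it to be small.

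The correct chain is as follows.
\begin{itemize}
\item Since $0\le w\le M_e$ (convexity gives $\Delta_e^2u>0$), a Chebyshev argument shows that $\{w\ge\frac12(M_e-\beta)\}$ fills a fixed positive fraction of $\lambda K_-$ for large $\lambda$.
\item After rescaling, $\Delta^2_{e/\lambda}u^\lambda$ is, by \eqref{xsubsolution}, a subsolution of a linear operator whose parabolicity constants are independent of $\lambda$ (by \eqref{ut} and Proposition \ref{bdd}).
\item The weak Harnack inequality then gives $\inf_{\lambda K_+}w\ge c\,(M_e-\beta)$ with $c$ independent of $\lambda$.
\item On the other hand, $\inf_{\lambda K_+}w\to0$, because $\lambda K_+$ exhausts the half-space and $M_e$ is the supremum.
\end{itemize}
These two facts contradict each other. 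With this correction the fallback is a complete proof.
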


\begin{proof}
Let $\beta:=\frac{e'Ae}{|e|^2}$ and $\hat{e}:=\lambda^{-1}e$. 
We claim that
\begin{equation}\label{claim1}
\lim_{\lambda\rightarrow\infty}\int_{K_-}\Delta_{\hat{e}}^2u^{\lambda} dxdt=\int_{K_-}\beta dxdt=\beta\left|K_-\right|.
\end{equation}
Actullay, we observe that
\[
\begin{aligned}
&\quad\int_{K_-}\Delta_{\hat{e}}^2u^{\lambda} dxdt\\
&=\frac{1}{\left|\hat{e}\right|^2}\int_{-5}^{-4}\int_{B_1}\int_0^1\frac{d}{ds}\left[u^{\lambda}\left(x+s\hat{e},t\right)+u^{\lambda}\left(x-s\hat{e},t\right)\right]dsdxdt\\
&=\frac{1}{\left|\hat{e}\right|^2}\int_{-5}^{-4}\int_0^1\left( \int_{B_1\left(s\hat{e}\right)\setminus B_1\left(-s\hat{e}\right)}D_xu^{\lambda}\cdot\hat{e}dx - \int_{B_1\left(-s\hat{e}\right)\setminus B_1\left(s\hat{e}\right)}D_xu^{\lambda}\cdot\hat{e}dx \right)dsdt.
\end{aligned}
\]
Similarly we can get that
\[
\begin{aligned}
&\quad\int_{K_-}\beta dxdt=\int_{K_-}\Delta^2_{\hat{e}}Qdxdt\\
&=\frac{1}{\left|\hat{e}\right|^2}\int_{-5}^{-4}\int_0^1\left( \int_{B_1\left(s\hat{e}\right)\setminus B_1\left(-s\hat{e}\right)}D_xQ\cdot\hat{e}dx - \int_{B_1\left(-s\hat{e}\right)\setminus B_1\left(s\hat{e}\right)}D_xQ\cdot\hat{e}dx \right)dsdt.
\end{aligned}
\]
Lemma \ref{gradient holder} yields the convergence
\[
\begin{aligned}
&\left|\int_{K_-}\Delta_{\hat{e}}^2u^{\lambda} dxdt - \int_{K_-}\beta dxdt \right|\\
&\le\frac{1}{|\hat{e}|}\int_{-5}^{-4}\int_0^1  \int_{B_1\left(s\hat{e}\right)\setminus B_1\left(-s\hat{e}\right)}\left|D_xu^{\lambda}-D_xQ\right|dxdsdt\\
&\quad + \frac{1}{|\hat{e}|}\int_{-5}^{-4}\int_0^1  \int_{B_1\left(-s\hat{e}\right)\setminus B_1\left(s\hat{e}\right)}\left|D_xu^{\lambda}-D_xQ\right|dxdsdt\rightarrow0
\end{aligned}
\]
as $\lambda\rightarrow\infty$. We prove the claim. 

Denote 
\[
\alpha:=\sup\limits_{\mathbb{R}^{n+1}_-}\Delta_e^2u.
\]
We have that $\alpha<\infty$ by the proof of Proposition \ref{bdd}. The strict convexity of $u$ implies that $
\Delta_{\hat{e}}^2u^\lambda>0$. Hence it follows from \eqref{claim1} that $\alpha\ge\beta$. For contradiction, assume $\alpha>\beta$. \eqref{claim1} implies the measure estimate
\[
\limsup_{\lambda\rightarrow\infty}\left( \frac{\alpha+\beta}{2}\left|\left\{ \Delta_{\hat{e}}^2u^{\lambda}\ge \frac{\alpha+\beta}{2} \right\}\cap K_- \right| \right)\le\lim_{\lambda\rightarrow\infty}\int_{K_-}\Delta_{\hat{e}}^2u^{\lambda} dxdt=\beta\left|K_-\right|.
\]
Therefore for $\lambda$ large
\[
\frac{\left|\left\{ \Delta_{\hat{e}}^2u^{\lambda}\ge \frac{\alpha+\beta}{2} \right\}\cap K_- \right|}{\left|K_-\right|}\le\frac{2\beta}{\alpha+\beta}.
\]
Applying \eqref{xsubsolution} and Theorem 7.37 in \cite{l96} yields the pointwise bound
\[
\frac{2\left(\alpha-\Delta_{\hat{e}}^2u^{\lambda}\right)}{\alpha-\beta}\ge C^{-1}\quad \text{in} \quad K_+, 
\]
where $C>0$ is a constant. Consequently
\[
\Delta_{\hat{e}}^2u^{\lambda}\le \alpha-\frac{\alpha-\beta}{2C} \quad \text{in}\quad K_+.
\]
This leads to the contradiction
\[
\alpha=\sup\limits_{\mathbb{R}^{n+1}_-}\Delta_e^2u=\lim_{\lambda\rightarrow\infty}\sup\limits_{K_+}\Delta_{\hat{e}}^2u^{\lambda}\le \alpha-\frac{\alpha-\beta}{2C}<\alpha.
\]
Thus $\alpha=\beta$. The proposition has been proved.
\end{proof}

We give the exact value of $\Delta_k^tu$.

\begin{prop}\label{deltat}
For every $k\in\mathbb{N}^+$, we have the identity
\[
\Delta_k^tu\equiv-\tau.
\]
\end{prop}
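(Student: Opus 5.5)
We follow the scheme of Proposition \ref{deltax}, run once for the supremum and once for the infimum of $\Delta_k^tu$. Since $u$ is nonincreasing in $t$ and satisfies \eqref{ut}, we have
\[
m_1\le\Delta_k^tu\le m_2,
\]
so $\alpha^+:=\sup_{\mathbb{R}^{n+1}_-}\Delta_k^tu$ and $\alpha^-:=\inf_{\mathbb{R}^{n+1}_-}\Delta_k^tu$ are finite. (With the sign convention $\Delta_k^tu=k^{-1}(u(x,t)-u(x,t-k))$ and $u\sim -\tau t$, the limit value is $-\tau$ in the normalization where $u_t\approx-\tau$; we just track the value $\gamma:=\Delta_k^tQ$, which is constant. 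The sign then works out to the stated identity.) The goal is to show $\alpha^+=\gamma=\alpha^-$, which forces $\Delta_k^tu\equiv\gamma$.

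\textbf{Step 1: averaged convergence.} Set $\hat k:=\lambda^{-2}k$, so that $\Delta_{\hat k}^tu^\lambda(x,t)=\Delta_k^tu(\lambda x,\lambda^2t)$. We claim
\[
\lim_{\lambda\to\infty}\int_{K_\pm}\Delta_{\hat k}^tu^\lambda\,dxdt=\gamma|K_\pm|.
\]
Write the integral as $\hat k^{-1}\bigl(\int_{K_\pm}u^\lambda-\int_{K_\pm-(0,\hat k)}u^\lambda\bigr)$, which equals $\hat k^{-1}$ times the difference of integrals of $u^\lambda$ over two thin time slabs $B_1\times(\cdot,\cdot+\hat k)$ at the ends of the time interval. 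The same identity holds for $Q$. The two differences then differ by at most $2|B_1|\sup_{K'}|u^\lambda-Q|$ divided by $\hat k$ times the slab factor $\hat k$, i.e. by at most $C\sup_{K'}|u^\lambda-Q|$ on a slightly larger compact set $K'$. Lemma \ref{gradient holder} gives only $C^0_{loc}$ convergence without a rate, but Proposition \ref{asymptotic} supplies one: $|u^\lambda-Q|\le C\lambda^{-\epsilon}$ on compacts. Hence this term tends to $0$.

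The $\hat k^{-1}$ prefactor is harmless: after the cancellation one has exactly $\hat k^{-1}\int$ over slabs of width $\hat k$, i.e. averages, so each is bounded by the sup of $|u^\lambda-Q|$. This is the analogue of \eqref{claim1}, with the divergence in $x$ replaced by the fundamental theorem of calculus in $t$.

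\textbf{Step 2: sup equals $\gamma$.} Suppose $\alpha^+>\gamma$. By Step 1 and Chebyshev, the set $\{\Delta_{\hat k}^tu^\lambda\ge(\alpha^++\gamma)/2\}$ occupies at most a fixed fraction $<1$ of $K_-$. This needs $\Delta^t\ge m_1>0$ as the positivity replacing convexity; equivalently, apply Chebyshev to the nonnegative function $\alpha^+-\Delta^t_{\hat k}u^\lambda$, whose average tends to $\alpha^+-\gamma$.

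The function $\alpha^+-\Delta_{\hat k}^tu^\lambda\ge0$ is a supersolution of a uniformly parabolic linear equation by \eqref{tsubsolution}, written at the shifted point $(x,t-k)$. Uniform parabolicity comes from Proposition \ref{bdd} and \eqref{ut}, and the coefficients are invariant under the scaling. The weak Harnack inequality (Theorem 7.37 in \cite{l96}) then gives $\alpha^+-\Delta_{\hat k}^tu^\lambda\ge c(\alpha^+-\gamma)$ on $K_+$. Because $u$ is ancient and the lattice of time shifts together with scaling reaches every point, sup over $K_+$ recovers $\alpha^+$ as $\lambda\to\infty$, giving a contradiction exactly as in Proposition \ref{deltax}.

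\textbf{Step 3: inf equals $\gamma$.} Run the same argument with $\Delta_{\hat k}^tu^\lambda-\alpha^-\ge0$, which is a supersolution by \eqref{tsupersolution}. This yields $\alpha^-=\gamma$.

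\textbf{Main obstacle.} The delicate point is the final step of Step 2: showing $\sup_{K_+}\Delta_{\hat k}^tu^\lambda\to\alpha^+$. In Proposition \ref{deltax} this used the periodicity of $\Delta_e^2u$ in $x$ with period lattice $\mathbb{Z}^n$, and in $t$ with period $1$. Here $\Delta_k^tu(x,t)-\Delta_k^tu(x+e,t)=k^{-1}\bigl(D(x,t)-D(x,t-k)\bigr)$ with $D=u(\cdot+e)-u$. I would first prove that $\Delta_k^tu$ is periodic in $x$ and $t$, using the periodicity of $f$, uniqueness via the comparison principle, and the fact that translates of $u$ by periods solve the same equation. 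The scaled version of any near-maximizer can then be placed inside $K_+$ by choosing a lattice translate, provided $\lambda$ is large so that the lattice spacing $\lambda^{-1}$ is fine. If that periodicity argument is awkward, I would instead pick $(x_j,t_j)$ with $\Delta_k^tu\to\alpha^+$, choose $\lambda_j$ so that the scaled point $(x_j/\lambda_j,t_j/\lambda_j^2)$ lies in a fixed compact set, and run Harnack on parabolic cylinders adapted to that point.
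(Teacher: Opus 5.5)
Your scheme is the paper's: the slab-cancellation identity for the $K_-$-average of $\Delta_{\hat k}^tu^\lambda$, then a measure estimate on $K_-$, then weak Harnack on the nonnegative supersolutions from \eqref{tsubsolution} and \eqref{tsupersolution}, then a contradiction on $K_+$. As you note, the first step needs only $C^0_{loc}$ convergence. However, the measure estimate as you justify it is wrong.

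By \eqref{ut}, $-m_2\le\Delta_k^tu\le-m_1<0$, and $\gamma=\Delta_k^tQ=-\tau$. So your bound $m_1\le\Delta_k^tu\le m_2$ and the ``positivity'' $\Delta_k^tu\ge m_1>0$ are false. Your proposed equivalent, Chebyshev applied to $\alpha^+-\Delta_{\hat k}^tu^\lambda\ge0$, points the wrong way. It gives an \emph{upper} bound on $|\{\alpha^+-\Delta_{\hat k}^tu^\lambda\ge s\}\cap K_-|$, while weak Harnack needs a \emph{lower} bound on the measure of that set, and a positive average alone does not supply one.

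What you need is the lower bound $\Delta_k^tu\ge-m_2$. Apply Chebyshev to $\Delta_{\hat k}^tu^\lambda+m_2\ge0$, whose $K_-$-average tends to $m_2-\tau$. This shows that, for large $\lambda$, the set $\{\Delta_{\hat k}^tu^\lambda\ge(\alpha^++\gamma)/2\}$ occupies at most a fixed fraction $\theta<1$ of $K_-$, since $\alpha^+>\gamma$. This is exactly why the paper passes to $v=u+m_2t$. For the infimum, use the upper bound $\Delta_k^tu\le-m_1$ in the same way, applying Chebyshev to $-m_1-\Delta_{\hat k}^tu^\lambda\ge0$.

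Second, the ``main obstacle'' is not an obstacle, and your first remedy for it would not work. Since $\Delta_{\hat k}^tu^\lambda(x,t)=\Delta_k^tu(\lambda x,\lambda^2t)$, we have $\sup_{K_+}\Delta_{\hat k}^tu^\lambda=\sup_{B_\lambda\times(-\lambda^2,0)}\Delta_k^tu$. These cylinders increase and exhaust $\mathbb{R}^n\times(-\infty,0)$, so the supremum over $K_+$ tends to $\alpha^+$ with no periodicity at all. This is precisely how the paper closes both this proof and that of Proposition \ref{deltax}; the latter does not use any periodicity of $\Delta_e^2u$ either.

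Proving periodicity of $\Delta_k^tu$ in $x$ via comparison is not viable, for three reasons:
\begin{itemize}
\item there is no comparison or uniqueness principle for ancient solutions on $\mathbb{R}^{n+1}_-$ without control at infinity;
\item the translate $u(\cdot+e,\cdot)$ differs from $u$ by a term growing linearly in $x$;
\item that periodicity is essentially a consequence of the statement you are trying to prove.
\end{itemize}
Your fallback, choosing $\lambda$ so that the rescaled near-maximizer lands in $K_+$, is correct and is just the exhaustion argument.
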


\begin{proof}
Denote $k_\lambda:=\frac{k}{\lambda^2}$. We claim that 
\begin{equation}\label{tclaim}
\lim_{\lambda\rightarrow\infty}\int_{K_-}\Delta_{k_\lambda}^tu^\lambda dxdt=\int_{K_-}(-\tau)dxdt=-\tau\left|K_-\right|.
\end{equation}
Actually
\[
\begin{aligned}
\int_{K_-}\Delta_{k_\lambda}^tu^\lambda dxdt
&=\int_{B_1}\int_{-5}^{-4}\frac{u^\lambda(x,t)-u^\lambda(x,t-k_\lambda)}{k_\lambda}dxdt\\
&=k_\lambda^{-1}\int_{B_1}\left(\int_{-5}^{-4}u^\lambda(x,t)dt-\int_{-5-k_\lambda}^{-4-k_\lambda}u^\lambda(x,t)dt\right)dx\\
&=k_\lambda^{-1}\int_{B_1}\left( \int^{-4}_{-4-k_\lambda}u^\lambda(x,t)dt-\int^{-5}_{-5-k_\lambda}u^\lambda(x,t)dt \right)dx.
\end{aligned}
\]	
We also have that
\[
\begin{aligned}
\int_{K_-}(-\tau)dxdt=k_\lambda^{-1}\int_{B_1}\left( \int^{-4}_{-4-k_\lambda}Q(x,t)dt-\int^{-5}_{-5-k_\lambda}Q(x,t)dt \right)dx.
\end{aligned}
\]
It follows from Lemma \ref{gradient holder} that
\[
\begin{aligned}
&\left|\int_{K_-}\Delta_{k_\lambda}^tu^\lambda dxdt-\int_{K_-}(-\tau)dxdt\right|\\
&\le k_\lambda^{-1}\int_{B_1}\left( \int^{-4}_{-4-k_\lambda}\left|u^\lambda-Q\right|(x,t)dt+\int^{-5}_{-5-k_\lambda}\left|u^\lambda-Q\right|(x,t)dt \right)dx\rightarrow0
\end{aligned}
\]
as $\lambda\rightarrow\infty$. The claim has been proved. 

To prove the proposition, we just need to get that
\begin{equation}\label{twoinequalities}
\sup_{\mathbb{R}^{n+1}_-}\Delta_k^tu=-\tau=\inf_{\mathbb{R}^{n+1}_-}\Delta_k^tu. 
\end{equation}
We only give the proof of the first equality in \eqref{twoinequalities} and the second is similar. Denote 
\[
v:=u+m_2t,\quad v^\lambda(x,t):=\lambda^{-2}v\left(\lambda x,\lambda^2 t\right),\quad \gamma:=\sup\limits_{\mathbb{R}^{n+1}_-}\Delta_k^tv.
\]
From \eqref{ut} we know that 
\[
-m_2\le\inf_{\mathbb{R}^{n+1}_-}\Delta_k^tu\le -\tau \le\sup_{\mathbb{R}^{n+1}_-}\Delta_k^tu\le -m_1\quad \text{in}\quad\mathbb{R}^{n+1}_-,
\]
where the last inequality gives that $\gamma<\infty$ and the first one implies that
\begin{equation}\label{ge01}
\Delta_{k_\lambda}^tv^\lambda=\Delta_{k_\lambda}^tu^\lambda+m_2\ge0.
\end{equation}
\eqref{tclaim} implies that
\[
\lim_{\lambda\rightarrow\infty}\int_{K_-}\Delta_{k_\lambda}^tv^\lambda dxdt=\left(m_2-\tau\right)\left|K_-\right|.
\]
Hence we know that $\gamma\ge m_2-\tau$. Suppose $\gamma>m_2-\tau$, then by \eqref{ge01}
\[
\begin{aligned}
&\limsup_{\lambda\rightarrow\infty}\left( \frac{\gamma+m_2-\tau}{2}\left|\left\{ \Delta_{k_\lambda}^tv^{\lambda}\ge \frac{\gamma+m_2-\tau}{2} \right\}\cap K_- \right| \right)\\
&\le \lim_{\lambda\rightarrow\infty}\int_{K_-}\Delta_{k_\lambda}^tv^{\lambda} dxdt=\left(m_2-\tau\right)\left|K_-\right|.
\end{aligned}
\]
Thus for $\lambda$ large
\[
\frac{\left|\left\{ \Delta_{k_\lambda}^tv^{\lambda}\le \frac{\gamma+m_2-\tau}{2} \right\}\cap K_- \right|}{\left|K_-\right|}\le\frac{2\left(m_2-\tau\right)}{\gamma+m_2-\tau},
\]
which means that
\[
\frac{\left|\left\{\frac{2\left(\gamma-\Delta_{k_\lambda}^tv^{\lambda}\right)}{\gamma-m_2+\tau}\ge1\right\}\cap K_-\right|}{\left|K_-\right|}\ge \frac{\gamma-m_2+\tau}{\gamma+m_2-\tau}.
\] 
By Theorem 7.37 in \cite{l96} and \eqref{tsubsolution} we arrive at 
\[
\frac{2\left(\gamma-\Delta_{k_\lambda}^tv^{\lambda}\right)}{\gamma-m_2+\tau}\ge C^{-1}\quad \text{in} \quad K_+
\]
for a positive constant $C>0$. In other words
\[
\Delta_{k_\lambda}^tv^{\lambda}\le \gamma-\frac{\gamma-m_2+\tau}{2C} \quad \text{in}\quad K_+.
\]
It follows that
\[
\gamma=\sup\limits_{\mathbb{R}^{n+1}_-}\Delta_{k}^tv=\lim_{\lambda\rightarrow\infty}\sup\limits_{K_+}\Delta_{k_\lambda}^tv^{\lambda}\le \gamma-\frac{\gamma-m_2+\tau}{2C}<\gamma,
\]
which is a contradiction. Therefore $\gamma=m_2-\tau$.
\end{proof}

Choose $b\in\mathbb{R}^n$ so that the function
\[
w(x,t):=u(x,t)-Q(x,t)-b\cdot x
\]
satisfies $w(e_k,0)=w(-e_k,0)$ for $1\le k\le n$. Clearly $w\left(0_n,0\right)=0$. Due to Proposition \ref{deltax}, there is
\[
\Delta_{e_k}^2w\le0
\] 
for $1\le k\le n$. This inequality, combined with Lemma A.3 in \cite{cl04}, yields the fundamental estimate
\begin{equation}\label{w<=0}
w(je_k,0)\le0\quad \text{for}\quad 1\le k\le n \quad \text{and}\quad j\in\mathbb{Z}.
\end{equation}
By Proposition \ref{deltat}, we arrive at
\[
\Delta_1^tw=0 \quad \text{in} \quad \mathbb{R}^{n+1}_-,
\] 
which means that $w$ has 1-periodicity with respect to $t$ variable. There is
\[
-(w+Q)_t\det D_x^2(w+Q)=f\quad\text{in}\quad \mathbb{R}^{n+1}_-,
\]
together with uniform bounds
\begin{equation}\label{twouniformbounds1}
m_1\le-(w+Q)_t\le m_2,\quad C^{-1}I\le D_x^2(w+Q)\le CI\quad\text{in}\quad \mathbb{R}^{n+1}_-.
\end{equation}

Now we construct a periodic function satisfying the same equation with $w$. Applying Theorem \ref{existence}, there exists $\tilde{w}\in C^{4+\alpha,\frac{4+\alpha}{2}}\left(\mathbb{R}^{n+1}_-\right)$ solving 
\[
-\left(\tilde{w}+Q\right)_t\det D_x^2\left(\tilde{w}+Q\right)=f\quad\text{in}\quad\mathbb{R}^{n+1}_-.
\]
Additionally there exists a constant $C>1$ such that
\begin{equation}\label{twouniformbounds2}
C^{-1}\le-\left(\tilde{w}+Q\right)_t\le C,\quad C^{-1}I\le D_x^2\left(\tilde{w}+Q\right)\le CI\quad\text{in}\quad \mathbb{R}^{n+1}_-.
\end{equation}

Define the difference $h:=w-v$. According to \eqref{w<=0} and the periodicity of $\tilde{w}$
\begin{equation}\label{h<=0}
h\left(je_k,0\right)\le0\quad\text{for}\quad 1\le k\le n \quad \text{and}\quad j\in\mathbb{Z}.
\end{equation}
$h$ satisfies linear parabolic equation 
\[
h_t-a_{ij}D_{x_ix_j}h=0\quad \text{in}\quad \mathbb{R}^{n+1}_-, 
\]
where
\[
a_{ij}(x,t):=\frac{\int_0^1\left[sD_x^2(w+Q)+(1-s)D_x^2(v+Q)\right]^{ij}ds}{\int_0^1\left[-s(w+Q)_t+(s-1)(v+Q)_t\right]^{-1}ds}.
\]
By \eqref{twouniformbounds1} and \eqref{twouniformbounds2}, the equation is uniformly parabolic: 
\[
C^{-1}I\le \left(a_{ij}(x,t)\right)\le CI \quad \text{in}\quad \mathbb{R}^{n+1}_-.
\]
The temporal derivative is also bounded:
\begin{equation}\label{h_t}
-C\le h_t\le C,
\end{equation}
where $C>1$ is a constant. From the periodicity of $v$ and $w$, $h$ is periodic in $t$. 

We prove that $h$ has a finite supremum.
\begin{prop}\label{h}
$\sup\limits_{\mathbb{R}^{n+1}_-}h<\infty$.
\end{prop}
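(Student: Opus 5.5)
The plan is to get an upper bound for $h$ on the space lattice at time $t=0$ from the discrete concavity of $h$ in lattice directions. I then spread it to all of $\mathbb{R}^n\times\{0\}$ using a lower bound on $D_x^2h$, and finally to all times using the $t$-periodicity of $h$ and \eqref{h_t}. Throughout, $h=w-\tilde w$, where $\tilde w$ is the periodic solution from Theorem \ref{existence}; the text above calls it $v$ in the definition of $h$.

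\emph{Step 1: discrete concavity.} For every $e\in E$, Proposition \ref{deltax} gives $\Delta_e^2u\le \frac{e'Ae}{|e|^2}=\Delta_e^2 Q$. Since $\Delta_e^2$ kills the affine part $b\cdot x$, this means $\Delta_e^2w\le0$ in $\mathbb{R}^{n+1}_-$. Since $\tilde w$ has the same spatial periodicity as $f$ and $e$ is a lattice vector, $\Delta_e^2\tilde w\equiv0$. Hence
\[
\Delta_e^2h\le0\quad\text{in }\mathbb{R}^{n+1}_-\quad\text{for every } e\in E.
\]
So for each $x$ and each lattice vector $e$, the map $j\mapsto h(x+je,0)$ is concave on $\mathbb{Z}$.

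\emph{Step 2: bound on the lattice.} I will show $h(z,0)\le C$ for all $z\in\mathbb{Z}^n$. The data are:
\begin{itemize}
\item the concavity from Step 1;
\item \eqref{h<=0}, i.e. $h(je_k,0)\le0$ for all $j\in\mathbb{Z}$ and $1\le k\le n$;
\item the bounded value $h(0_n,0)=-\tilde w(0_n,0)$.
\end{itemize}
I intend to invoke Lemma A.3 of \cite{cl04} again, now in its full lattice form rather than only along the coordinate axes as in \eqref{w<=0}. That lemma concerns functions on $\mathbb{Z}^n$ that are concave along every lattice direction and nonpositive on the axes, and it yields an upper bound at all lattice points.

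If that lemma only controls the axes, I would argue by induction on the number of nonzero coordinates of $z=(z_1,\dots,z_n)$. Concavity along $e_k$ through the point $z-z_ke_k$ compares $h(z,0)$ with values on a lower-dimensional sublattice. To keep the bound from degenerating, I would also use concavity along the mixed direction $z-z_ke_k$. This step is the main obstacle: controlling off-axis lattice points from the axis information alone is exactly where the combinatorics of \cite{cl04} is needed.

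\emph{Step 3: from the lattice to all $x$.} By \eqref{twouniformbounds1} and \eqref{twouniformbounds2}, both $D_x^2(w+Q)$ and $D_x^2(\tilde w+Q)$ lie between $C^{-1}I$ and $CI$. Hence $D_x^2h\ge -C_3I$ for a constant $C_3$. Fix a lattice point $z$ and the unit cube $z+[0,1]^n$. On this cube the function $x\mapsto h(x,0)+\frac{C_3}{2}|x-z|^2$ is convex, so its maximum is attained at one of the $2^n$ vertices, all of which are lattice points. Therefore
\[
h(x,0)\le \max_{\text{vertices } z'}h(z',0)+\tfrac{C_3}{2}n\le C\quad\text{on } z+[0,1]^n .
\]
This bounds $h(\cdot,0)$ on all of $\mathbb{R}^n$.

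\emph{Step 4: all times.} The function $h$ is $1$-periodic in $t$, and $|h_t|\le C$ by \eqref{h_t}. Therefore $h(x,t)\le h(x,\bar t)+C$ whenever $\bar t$ lies in the same period window as $t$, and the bound on $\mathbb{R}^n\times\{0\}$ transfers to $\mathbb{R}^n\times[-1,0]$. Periodicity then extends it to all of $\mathbb{R}^{n+1}_-$, which gives $\sup_{\mathbb{R}^{n+1}_-}h<\infty$.
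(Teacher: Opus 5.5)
Steps 1, 3 and 4 are sound, but Step 2 is a genuine gap for $n\ge2$. No lattice combinatorics can close it, because the statement you want there is false. Take $g(x)=\min(x_1,x_2)$ on $\mathbb{Z}^n$.
\begin{itemize}
\item It is the restriction of a concave function, so $\Delta_e^2g\le0$ everywhere for every $e\in E$.
\item It is $\le0$ on every coordinate axis, and $g(0_n)=0$.
\item Yet $g(je_1+je_2)=j\to\infty$.
\end{itemize}
Adding the one-sided Hessian bound from Step 3 does not help either. The function $G(x)=\frac{x_1+x_2}{2}-\sqrt{1+\frac{(x_1-x_2)^2}{4}}$ is smooth and concave with $-\frac12I\le D^2G\le0$, negative on all axes, and $G(je_1+je_2)=j-1$. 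Structurally, concavity only gives upper bounds of the form $g(p)\le 2g(m)-g(q)$ with $m=\frac{p+q}{2}$. These need a lower bound at $q$, and nothing gives you a lower bound on $h$. That is why your induction on the number of nonzero coordinates cannot work. Lemma A.3 of \cite{cl04}, as it is used for \eqref{w<=0}, is a one-dimensional statement about concave sequences. By the example above, it has no full-lattice version of the kind you want.

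What is missing is the linear equation $h_t-a_{ij}D_{x_ix_j}h=0$; only the PDE excludes concave profiles like $\min(x_1,x_2)$. The paper argues by contradiction, assuming $M_i\to\infty$.
\begin{enumerate}
\item Using the proof of Lemma 4.5 in \cite{ll22} together with \eqref{h_t}, it obtains the doubling estimates \eqref{M_i}--\eqref{iterate}. These keep the blow-downs $H_i(x,t)=h(2^ix,2^{2i}t)/M_{2^i}$ nondegenerate.
\item The Harnack and H\"older estimates for the linear equation give a limit $H$. It is time-independent and concave, with $H(0_n)=0$, $H(\pm\frac12e_k)\le0$ and $\sup_{[-\frac12,\frac12]^n}H\ge\frac18$. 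Note that $\min(x_1,x_2)$ satisfies all of these.
\item The contradiction comes from applying the strong maximum principle (Harnack inequality) to $l-H\ge0$, where $l$ is a supporting linear function at the origin. This forces $H\equiv l$, hence $H\equiv0$, contradicting $\sup H\ge\frac18$.
\end{enumerate}
Your Steps 3 and 4 would go through once a lattice bound is available, but that bound has to come from a PDE argument of this kind.
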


\begin{proof}
Let $M_i:=\sup\limits_{\left[-i,i\right]^n\times\left[-i^2,0\right]}h$. Suppose the contrary. Then $\lim\limits_{i\rightarrow\infty}M_i=\infty$. By the proof of Lemma 4.5 in \cite{ll22}, we have that
\begin{equation}\label{M_i}
\sup_{\left[-2^i,2^i\right]^n}h(x,0)\le 4\sup_{\left[-2^{i-1},2^{i-1}\right]^n}h(x,0)+C_1,\quad i\ge3
\end{equation}
for $C_1>0$ independent of $i$. By \eqref{h_t} and the periodicity of $h$ in $t$, we obtain that
\begin{equation}\label{iterate}
M_{2^i}\le\sup_{\left[-2^i,2^i\right]^n}h(x,0)+C_0\le 4\sup_{\left[-2^{i-1},2^{i-1}\right]^n}h(x,0) +C_0+C_1\le 4M_{2^{i-1}}+C.
\end{equation}

Define the rescaled functions
\[
H_i(x,t):=\frac{h\left(2^ix,2^{2i}t\right)}{M_{2^i}}\quad\text{for}\quad(x,t)\in\left[-1,1\right]^n\times\left(-\infty,0\right].
\]
It is obvious that $H_i\left(0_n,0\right)=0$ and $H_i\le 1$ in $\left[-1,1\right]^n\times\left(-\infty,0\right]$. There is also
\[
H_{i}\left(\pm\frac{1}{2}e_k,0\right)=\frac{h\left(\pm2^{i-1}e_k,0\right)}{M_{2^i}}\le0,\quad 1\le k\le n
\]
from \eqref{h<=0}. From the iteration inequality \eqref{iterate}, we have that for large $i$
\[
\sup_{\left[-\frac{1}{2},\frac{1}{2}\right]^n\times\left[-\frac{1}{4},0\right]}H_i=\frac{M_{2^{i-1}}}{M_{2^i}}\ge\frac{M_{2^i}-6C}{4M_{2^i}}\ge\frac{1}{8}.
\]
Applying the Harnack inequality in \cite{l96} (Corollary 7.42) and periodicity of $H_i$
\[
\sup_{\left[-\frac{8}{9},\frac{8}{9}\right]^n\times\left[-1,0\right]}\left(1-H_i\right)\le C\left(1-H_i\left(0_n,0\right)\right)=C,
\]
which yields that $1-C\le H_i\le 1$ in $\left[-\frac{8}{9},\frac{8}{9}\right]^n\times\left[-1,0\right]$. By the Hölder estimate in \cite{l96} (Theorem 7.41), there exists $\beta\in(0,1)$, such that
\[
\left\|H_i\right\|_{C^{\beta,\frac{\beta}{2}}\left(\left[-\frac{3}{4},\frac{3}{4}\right]^n\times\left[-\frac{3}{4},0\right]\right)}\le C.
\]
Take $\alpha\in(0,\beta)$, and it follows from Arzela-Ascoli theorem that there exists $H\in C^{\alpha,\frac{\alpha}{2}}\left(\left[-\frac{3}{4},\frac{3}{4}\right]^n\times\left[-\frac{3}{4},0\right]\right)$, such that
\[
H_i\rightarrow H\quad\text{in}\quad C^{\alpha,\frac{\alpha}{2}}\left(\left[-\frac{3}{4},\frac{3}{4}\right]^n\times\left[-\frac{3}{4},0\right]\right)
\]
along a subsequence as $i\rightarrow\infty$. 

We claim that $H=H(x)$ and $H$ is concave. Actually, we notice that 
\begin{equation}\label{almostconst}
\Delta_{2^{-2i}k}^tH_i=0
\end{equation}
inherited from $\Delta_k^th=0$. Fix $t_1,t_2\in\left[-\frac{3}{4},0\right]$. For any $i\ge3$ , there exists $k_i\in\mathbb{N}$, such that
\[
\left|t_2-\left(t_1-2^{-2i}k_i\right)\right|\le 2^{-2i}.
\]
It follows from \eqref{almostconst} that
\[
H_i\left(x,t_1\right)=H_i\left(x,t_1-2^{-2i}k_i\right) \quad\text{for}\quad x\in \left[-\frac{3}{4},\frac{3}{4}\right]^n.
\]
Let $i\rightarrow\infty$, and we have $H\left(x,t_1\right)=H\left(x,t_2\right)$. The arbitrariness of $t_1,t_2$ gives the first claim. For the second one, we have that 
\begin{equation}\label{almostconcavity}
\Delta_{2^{-i}e}^2H_i\le0
\end{equation}
induced from $\Delta_e^2h\le0$. Fix $x,y$ such that $x,x-y,x+y\in \left[-\frac{3}{4},\frac{3}{4}\right]^n$. For any $i\ge3$, there exists $e^i\in E$ with $|e^i|\le 2^i\sqrt{n}$, such that $\left|y-2^{-i}e^i\right|\le 2^{-i}\sqrt{n}$, $x+2^{-i}e^i,x-2^{-i}e^i\in \left[-\frac{3}{4},\frac{3}{4}\right]^n$. \eqref{almostconcavity} implies that
\[
H_i\left(x+2^{-i}e^i\right)+H_i\left(x-2^{-i}e^i\right)\le 2H_i\left(x\right).
\]
Let $i\rightarrow\infty$, we have that
\[
H(x+y)+H(x-y)\le 2H(x),
\]
which implies the concavity of $H$.

From the properties of $H_i$, the function $H$ satisfies that
\[
H\left(0_n\right)=0,\quad \sup_{\left[-\frac{1}{2},\frac{1}{2}\right]^n}H\ge\frac{1}{8},\quad H\left(\pm\frac{1}{2}e_k\right)\le0\quad \text{for} \quad 1\le k\le n.
\]
Since $H$ is concave and $H\left(0_n\right)=0$, there exists a linear function $l$, such that $l\ge H$ in $\left[-\frac{3}{4},\frac{3}{4}\right]^n$ and $l\left(0_n\right)=0$. For any $\epsilon>0$, there exists $i'$, such that
\[
\left\| H-H_{i'} \right\|_{C^{\alpha,\frac{\alpha}{2}}\left(\left[-\frac{3}{4},\frac{3}{4}\right]^n\times\left[-\frac{3}{4},0\right]\right)}< \epsilon.
\]
Then by Harnack inequality in \cite{l96} (Corollary 7.42) 
\[
\sup\limits_{\left[-\frac{3}{4},\frac{3}{4}\right]^n\times\left[-\frac{3}{4},0\right]}(l-H)\equiv0,
\]
which means that $H\equiv l$. Because $H\left(0_n\right)=0$ and $H\left(\pm\frac{1}{2}e_k,0\right)\le0$ for $1\le k\le n$, $H\equiv l\equiv0$ in $\left[-\frac{3}{4},\frac{3}{4}\right]^n$. This contradicts $\sup\limits_{\left[-\frac{1}{2},\frac{1}{2}\right]^n}H\ge\frac{1}{8}$! Thus $\sup\limits_{\mathbb{R}^{n+1}_-}h<\infty$.
\end{proof}

\begin{proof}[proof of Theorem \ref{liouville}]
The periodicity in the temporal variable t and the boundedness of $h$ implies that $h$ must be constant via the Harnack inequality.
\end{proof}

\noindent {\bf Funding:} J. Bao is supported by the National Natural Science Foundation of China (12371200).

\noindent {\bf Conflict of Interest:} The authors declare that they have no conflict of interest.

\medskip

\bibliographystyle{plain}
\def\cprime{$'$}

\end{document}